\newtheorem{defi}{Definition}[section]
\newtheorem{theorem}[defi]{Theorem}
\newtheorem{lemma}[defi]{Lemma}
\newtheorem{prop}[defi]{Proposition}
\newtheorem{corollary}[defi]{Corollary}
\newtheorem{question}{Question}
\theoremstyle{definition}
\newtheorem{example}[defi]{Example}
\newtheorem{remark}[defi]{Remark}
\begin{document}

\keywords{Wreath product of graphs, distance, diameter, antipodal graph, Zagreb indices, Wiener index, Szeged index}

\title{Some degree and distance-based invariants of wreath products of graphs}

\author{Matteo Cavaleri}
\address{Matteo Cavaleri, Universit\`{a} degli Studi Niccol\`{o} Cusano - Via Don Carlo Gnocchi, 3 00166 Roma, Italia}
\email{matteo.cavaleri@unicusano.it}

\author{Alfredo Donno}
\address{Alfredo Donno, Universit\`{a} degli Studi Niccol\`{o} Cusano - Via Don Carlo Gnocchi, 3 00166 Roma, Italia}
\email{alfredo.donno@unicusano.it}

\begin{abstract}
The wreath product of graphs is a graph composition inspired by the notion of wreath product of groups, with interesting connections with Geometric Group Theory and Probability.
This paper is devoted to the description of some degree and distance-based invariants, of large interest in Chemical Graph Theory, for a wreath product of graphs. An explicit formula is obtained for the Zagreb indices, in terms of the Zagreb indices of the factor graphs. A detailed analysis of distances in a wreath product is performed, allowing to describe the antipodal graph and to provide a formula for the Wiener index. Finally, a formula for the Szeged index is obtained. Several explicit examples are given.

\end{abstract}

\maketitle

\begin{center}
{\footnotesize{\bf Mathematics Subject Classification (2010)}: 05C07, 05C12, 05C40, 05C76.}
\end{center}

\section{Introduction}

The idea of constructing new graphs starting from smaller  graphs is very natural and largely developed in literature, for its theoretical interest in several branches of Mathematics - Algebra, Combinatorics, Probability, Harmonic Analysis - but also for its practical applications in Mathematical Chemistry, as graphs are generated in a very natural way from molecules when atoms are replaced by vertices, and bonds by edges. A large number of papers in the last decades has been devoted to graph compositions, and to the investigation of their topological, combinatorial, and spectral properties.\\
\indent The most classical graph products are the Cartesian
product, the direct product, the strong product, the lexicographic product
(see \cite{imrichbook} and reference therein). More recently, the zig-zag product was
introduced \cite{annals}, in order to produce expanders of constant degree and
arbitrary size; in \cite{zigtree, ijgt}, some combinatorial and topological properties of such product, as well as connections with random walks, have been investigated. It is worth mentioning that many of these constructions play an important role in Geometric Group Theory, since it turns out
that, when applied to Cayley graphs of two finite groups, they
provide the Cayley graph of an appropriate product of these groups (see \cite{groups}, where this correspondence is shown for zig-zag products, or \cite{gc}, for the case of wreath and generalized wreath products).\\
\indent In the context of graph compositions, an intensively studied topic of research is represented by a number of topological indices, mostly defined in terms of the degree of the vertices of the graph, or in terms of the distance between pairs of vertices. The target in this setting is to describe the considered index of the graph product in terms of the corresponding index of the factor graphs.\\
\indent Among the most studied degree-based topological indices associated with a graph there are the Zagreb indices, originally introduced by Gutman and Trinajsti\'{c} in \cite{defizagreb} in the early work of the Zagreb Mathematical Chemistry Group on the topological basis of the $\pi$-electron energy. Many reformulations and generalizations of such indices have been introduced later in literature, and several connections to many other quantities in Chemical Graph Theory have been pointed out (see also the beautiful book \cite{tri}). In the paper \cite{zagreboperations}, the Zagreb indices of some graph compositions have been investigated. \\
A fundamental distance-based index, and probably the most thoroughly examined, is the Wiener index, which is defined as the sum of the distances between all the unordered pairs of vertices of the graph. This index was introduced by  Wiener \cite{wiener} and, due to the wide
range of applications, it is nowadays largely studied. In particular, it is one of the most frequently used topological indices in
Mathematical Chemistry. For this reason, it has a strong correlation with many physical and
chemical properties of molecular compounds, whose properties do not only depend on their chemical formula,
but also on their molecular structure. A large number of papers is devoted to the study of the Wiener index of graph compositions (see, for instance, the papers \cite{wieneroperations2, wieneroperations1,wprod}).\\
\indent Another recently introduced distance-based topological index for graph is the Szeged index, introduced in \cite{defisze}, which coincides with the Wiener index when the considered graph is a tree. In the paper \cite{szeged1}, a more general condition to be satisfied in order to have the equality of Wiener and Szeged indices is described. See the paper \cite{szeged2} and references therein for some basic properties of the Szeged index, several chemical applications, and for a description of the Szeged index of a Cartesian product of graphs and some other graph compositions. See \cite{lastklavzar, szeged2} for bounds on the difference between the Szeged index and the Wiener index of a graph.

In the present paper, we focus our attention on a graph composition known in literature as the wreath product of graphs. This construction is nowadays largely studied, and different generalizations have been introduced \cite{gc, erschler}. Notice that this construction is interesting not only from an algebraic and combinatorial point of view, but also for its connection with Geometric Group Theory and Probability, via the notions of Lamplighter group and Lamplighter random walk (see, for instance, \cite{grigorchukzuk, sca1, woesssmall}). Moreover, we want to mention that in \cite{coronanoi} a matrix operation has been introduced, called wreath product of matrices, which is a matrix analogue of the wreath product of graphs, since it provides the adjacency matrix of the wreath product of two graphs, when applied to the adjacency matrices of the factors. In the paper \cite{compcomp} the wreath product of two complete graphs has been investigated: an explicit computation of the spectrum and of the Wiener index has been performed. In the paper \cite{cocktail}, jointly with F. Belardo, we have considered the wreath product of a complete graph with a Cocktail Party graph, and we have described its spectrum, together with the Zagreb indices and the Randi\'{c} index. In the paper \cite{BCD}, jointly with F. Belardo, a more general analysis involving also the study of distances and the computation of the Wiener index has been developed for the wreath product of a complete graph with a cyclic graph.

The aim of this paper is to develop a more general analysis in the more general case of the wreath product of any two graphs. After describing in Section \ref{sectionpreliminary} some basic properties of the wreath product of two graphs - connectedness, regularity, vertex-transitivity, bipartiteness (Proposition \ref{propbip}) - we investigate in Section \ref{sectionzagreb} two degree-based topological indices of such a product, namely the first and the second Zagreb indices. A formula in the very general case is provided, describing such indices in terms of the corresponding indices of the factor graphs (Theorem \ref{theoremzagreb}). In Section \ref{sectiondistances}, our attention is focused on the study of distances in a wreath product of any two graphs: an explicit formula is obtained in Theorem \ref{dista}. This analysis enables us to describe eccentricity and diameter in a wreath product (Corollary \ref{diam}), and to describe the antipodal graph (Theorem \ref{wr}). Some more properties of the antipodal graph are obtained in Corollary \ref{Hami} and Proposition \ref{proplabnew}. A large number of detailed examples is presented. The analysis of distances developed in Section \ref{sectiondistances} allows us to give, in Section \ref{sectionwiener}, a formula for the Wiener index of a wreath product of graph (see Theorem \ref{ww} and Corollary \ref{wwk}): notice that the Wiener index turns out to depend only on the Wiener index of the second factor graph, and on a vector Wiener index associated with the first factor graph (see Definition \ref{wv}). The special cases where the first factor graph is the complete graph, or the path graph, are studied in detail in Sections \ref{sub1} and \ref{sub2}. Finally, Section \ref{sectionszeged} is devoted to the study of the Szeged index. In the general case, we obtain (see Theorem \ref{szegen}) a formula in terms of the Szeged index of the second factor graph, and of a sort of generalized Szeged index of the first factor graph (see Definition \ref{dd}), defined for every pair of subsets of its vertices. Under hypothesis of edge-transitivity for the first factor graph, we obtain a strong simplification of this result (Remark \ref{simmetrie}): as an application, we provide an explicit formula for the Szeged index of a wreath product, when the first factor graph is complete (see Theorem \ref{Tkn}).

\section{Preliminary definitions}\label{sectionpreliminary}

Every graph considered in this paper will be finite, undirected, and simple, that is, loops and multiple edges are not allowed, unless explicitly specified. Such a graph will be denoted by $G = (V_G,E_G)$, where $V_G$ denotes the vertex set, and $E_G$ is the edge set consisting of unordered pairs of type $\{u,v\}$, with $u,v\in V_G$. \\
\indent If $\{u,v\}\in E_G$, we say that the vertices $u$ and $v$ are adjacent
in $G$, and we write $u\sim v$. A path of length $\ell$ in $G$ is a sequence
$u_0,u_1,\ldots, u_{\ell}$ of vertices such that $u_i\sim u_{i+1}$, for each $i=0,\ldots, \ell-1$. The path is said to be a cycle if $u_0 = u_{\ell}$. The graph $G$ is connected if, for every $u,v\in V_G$, there exists a path
$u_0,u_1,\ldots, u_{\ell}$ in $G$ such that $u_0=u$ and $u_{\ell} = v$. For a graph $G$, we will denote by $d_G(u,v)$ the \textit{geodesic distance} (or distance for short) between the vertices $u$ and $v$, that is, the length of a shortest path in $G$ joining $u$ and $v$. We put $d_G(u,v) = \infty$ if there is no path connecting $u$ and $v$. The \textit{eccentricity} of a vertex $u\in V_G$ is defined as $e_G(u)= \max_{v\in V_G}\{d_G(u,v)\}$. The \textit{diameter} of $G$ is then defined as $diam(G)= \max_{u\in V_G}\{e_G(u)\}$. In particular, the diameter of $G$ is finite if and only if $G$ is a connected graph.

The \textit{adjacency matrix} of $G=(V_G,E_G)$ is the square matrix $A=(a_{u,v})_{u,v\in V_G}$, indexed by the vertices of $G$, such that
$$a_{u,v}=\begin{cases}
1 \mbox{ if } u\sim v\\
0 \mbox{ otherwise.}
\end{cases}
$$  As the graph $G$ is undirected, $A$ is a symmetric
matrix. The \textit{degree} of a vertex $u\in V_G$ is then defined as $\deg u = \sum_{v\in V_G}a_{u,v}$. In
particular, we say that $G$ is {\it regular} of degree $r_G$, or $r_G$-regular, if $\deg u=r_G$, for each $u\in V_G$. In this case, the \textit{normalized adjacency matrix} $A'$ of $G$ is obtained as $A' = \frac{1}{r_G}A$.


We recall now the classical definitions of vertex-transitivity, edge-transitivity, arc-transitivity (or $1$-transitivity) for a graph $G=(V_G, E_G)$ (see \cite{tutte}).
\begin{defi}
Let $G=(V_G,E_G)$ be a graph. An automorphism of $G$ is a permutation $\phi$ of $V_G$ such that $u\sim v$ if and only if $\phi(u) \sim \phi(v)$, for all $u,v \in V_G$.
\end{defi}
In the rest of the paper, we will denote by $Aut(G)$ the automorphism group of the graph $G=(V_G,E_G)$. The transitivity properties of the action of the automorphism group of a graph lead to the following definitions.

\begin{defi}
Let $G=(V_G,E_G)$ be a graph, and let $Aut(G)$ denote its automorphism group.
\begin{enumerate}
\item $G$ is vertex-transitive if, given any two vertices $u,v \in V_G$, there exists $\phi \in Aut(G)$ such that $\phi(u)=v$;
\item $G$ is edge-transitive if, given any two edges $e=\{u,v\}, f=\{u',v'\} \in E_G$, there exists $\phi \in Aut(G)$ such that $\{\phi(u),\phi(v)\}=\{u',v'\}$ (shortly $\phi(e)=f$);
\item $G$ is arc-transitive if, given any two pairs of adjacent vertices $u\sim v$ and $u'\sim v'$, there exists $\phi \in Aut(G)$ such that $\phi(u)=u'$ and $\phi(v) =v'$.
\end{enumerate}
\end{defi}

Observe that, since the definition of arc-transitivity maps one edge to another, an arc-transitive graph is also edge-transitive. Conversely, an edge-transitive graph need
not to be arc-transitive. In \cite{tutte}, Tutte proved that a connected regular graph of odd degree which is both vertex-transitive and edge-transitive is also arc-transitive. In \cite{bouwer}, an infinite family of vertex-transitive, edge-transitive, but not arc-transitive regular graphs of even degree is explicitly constructed.\\
\indent We recall now the fundamental definition of wreath product of graphs.

\begin{defi}\label{defierschler}
Let $G=(V_G, E_G)$ and $H=(V_H,E_H)$ be
two graphs. The \textit{wreath product} $G\wr H$ is the graph with vertex set $V_H^{V_G}\times V_G=
\{(f,v) | f:V_G\to V_H, \ v\in V_G\}$, where two vertices $(f,v)$
and $(f',v')$ are connected by an edge if:
\begin{enumerate}
\item ({\it edges of type I}) either $v=v'=:\overline{v}$ and $f(w)=f'(w)$ for every $w\neq \overline{v}$,
and $f(\overline{v})\sim f'(\overline{v})$ in $H$;
\item ({\it edges of type II}) or $f(w)=f'(w)$, for every $w\in
V_G$, and $v\sim v'$ in $G$.
\end{enumerate}
\end{defi}

It is a classical fact (see, for instance, \cite{woesssmall}) that the simple random walk on the graph $G\wr
H$ is the so called \textit{Lamplighter random walk}, according to the following interpretation:
suppose that at each vertex of $G$ (the \textit{base graph}) there is a lamp, whose possible states (or colors) are
represented by the vertices of $H$ (the \textit{color graph}), so that the vertex $(f,v)$ of $G\wr H$ represents the configuration of the $|V_G|$ lamps at each vertex of $G$ (for each vertex $u\in V_G$, the lamp at $u$ is in the state $f(u) \in V_H$), together with the position $v$ of a lamplighter walking on the graph $G$. At each step, the lamplighter may either go to a neighbor of the
current vertex $v$ and leave all lamps unchanged (this situation corresponds to edges of type II in $G\wr H$), or he may stay at the vertex $v \in G$, but he changes the state of the lamp which is in $v$ to a
neighbor state in $H$ (this situation corresponds to edges of type I in $G\wr H$). For this reason, the wreath product $G\wr H$ is also called the Lamplighter graph, or Lamplighter product, with base graph $G$ and color graph $H$.

It follows from Definition \ref{defierschler} that  if $|V_G|=n$ and
$|V_H|=m$, the graph $G\wr H$ has $nm^n$ vertices. It is easy to see that $G\wr H$ is connected if and only if $G$ and $H$ are connected.
Moreover, if $G$ is a regular graph of degree $r_G$ and $H$ is a regular graph of degree $r_H$, then the graph $G\wr H$ is an
$(r_G+r_H)$-regular graph.\\
\indent Notice that, in the case $|V_G|=1$, the graph $G\wr H$ is isomorphic to $H$; on the other hand, if $|V_H|=1$, then the graph $G\wr H$ is isomorphic to $G$. In the rest of the paper, we suppose that $|V_G|>1$ and $|V_H|>1$.

It is worth mentioning that the wreath product of graphs represents a graph analogue of the
classical wreath product of groups \cite{meldrum}, as it turns out that the wreath product of the Cayley graphs of two finite groups is the Cayley graph of the wreath product of the groups, with a suitable choice of the generating sets. In the paper \cite{gc}, this correspondence is proven in the more general context of generalized wreath products of graphs, inspired by the construction introduced in \cite{bayleygeneralized} for permutation groups.

In the paper \cite{coronanoi}, the wreath product of two square matrices $A$ of size $n$, and $B$ of size $m$, has been defined to be the square matrix of size $nm^n$ given by
$$
A\wr B = I_m^{\otimes^n} \otimes A + \sum_{i=1}^n I_m^{\otimes^{i-1}}\otimes B \otimes I_m^{\otimes^{n-i}}\otimes D_i,
$$
where $\otimes$ denotes the classical Kronecker product, $I_m$ is the identity matrix of size $m$, and $D_i = (d_{hk})_{h,k=1,\ldots,n}$ is the square matrix defined by
$$
d_{hk}= \left\{
          \begin{array}{ll}
            1 & \hbox{if } h=k=i \\
            0 & \hbox{otherwise.}
          \end{array}
        \right.
$$
It is also proven in \cite{coronanoi} that, if $A'_G$ is the normalized adjacency matrix of an $r_G$-regular graph $G=(V_G, E_G)$ and $A'_H$ is the normalized adjacency matrix of an $r_H$-regular graph $H=(V_H, E_H)$, then the wreath product $\left(\frac{r_G}{r_G+r_H}A'_G\right)\wr \left(\frac{r_H}{r_G+r_H}A'_H\right)$ is the normalized adjacency matrix of the graph $G\wr H$.

In Definition \ref{defierschler} the set of vertices of $G\wr H$ is given by  $V_H^{V_G}\times V_G$. Observe that, if $|V_G|=n$ and we fix an enumeration for $V_G:=\{x_1,x_2,\ldots,x_n\}$, then there is a natural bijection between the set of maps $V_H^{V_G}$ and the Cartesian product $(V_H)^n = \underbrace{V_H \times \cdots \times V_H}_{n \textrm{ times}}$, so that a vertex $w\in V_{G\wr H}$ can be written as $w=(y,x_i)$, where $y=(y_1,\ldots,y_n)\in (V_H)^n$ and $x_i\in V_G$. In the rest of the paper we will often use also the notation $w=(y_1,\ldots,y_n)x_i$ for a vertex $w$ of $G\wr H$.

It is not difficult to prove that, if $G=(V_G,E_G)$ and $H=(V_H, E_H)$ are two vertex-transitive graphs, then $G\wr H$ is a vertex-transitive graph. On the other hand, the properties of edge-transitivity and arc-transitivity are not inherited by the wreath product, the graph $K_2\wr C_4$ being a simple counterexample (see Example \ref{cappa24}).
We recall here the classical definitions of Cartesian product and direct product of graphs \cite{imrichbook}.
\begin{defi}\label{Cartesiananddirect}
Let $G=(V_G, E_G)$ and $H=(V_H,E_H)$ be two finite graphs.
\begin{itemize}
\item The \emph{Cartesian product} $G \Box H$ is the graph with vertex set $V_G\times V_H = \{(u,v): u\in V_G, v\in V_H\}$, where two vertices $(u,v)$ and $(u',v')$ are adjacent precisely if $u\sim u'$ in $G$ and $v=v'$, or if $v\sim v'$ in $H$ and $u=u'$.
\item The \emph{direct product} $G \times H$ is the graph with vertex set $V_G\times V_H = \{(u,v): u\in V_G, v\in V_H\}$, where two vertices $(u,v)$ and $(u',v')$ are adjacent precisely if $u\sim u'$ in $G$ and $v\sim v'$ in $H$.
\end{itemize}
\end{defi}
These graph products are both associative, so that we will use the notation $H^{\Box n}$ and $H^{\times n}$ for the $n$ times iterated Cartesian product and direct product of the graph $H$ with itself, respectively.

\begin{remark}\label{sottografo}
Notice that the graph $G\wr H$ can be regarded as a subgraph of
$H^{\Box n}\Box G$. This embedding also relates the geodesic distance in
$H^{\Box n}$ with the geodesic distance in $G\wr H$, as we will point out in
Theorem \ref{dista}. Moreover, if we fix $(y_1,\ldots, y_n)\in (V_H)^n$,
then the subgraph of $G\wr H$, induced by the vertices
 $\{(y_1,\ldots, y_n) x_i,\: i\in \{1,\ldots,n\}\}$, is isomorphic to $G$. On the other hand, fixing $k\in \{1,\ldots,n\}$
and $n-1$ vertices of $H$, let they be $y_1,\ldots,y_{k-1}, y_{k+1}, \ldots, y_n$, then the subgraph of $G\wr H$
induced by the vertices
 $\{(y_1,\ldots,y_{k-1},w, y_{k+1}, \ldots, y_n)x_k:\, w\in V_H \}$, is isomorphic to $H$.
\end{remark}

We have seen that regularity, connectedness, and vertex-transitivity properties are inherited by the wreath product. In the next proposition, we will focus on the bipartiteness property.

\begin{prop}\label{propbip}
Let $G=(V_G,E_G)$ and $H=(V_H,E_H)$ be two graphs. Then
$$
G\wr H \mbox{ is bipartite} \iff G \mbox{ and }  H \mbox{ are both bipartite}.
$$
\end{prop}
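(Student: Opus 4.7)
The proof will naturally split into the two implications, and the structural observations of Remark \ref{sottografo} do most of the work on one side.

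For the forward implication, assume $G\wr H$ is bipartite; since every subgraph of a bipartite graph is bipartite (odd cycles are forbidden in subgraphs as well), it suffices to exhibit copies of $G$ and of $H$ as subgraphs of $G\wr H$. This is exactly what Remark \ref{sottografo} provides: fixing any $(y_1,\dots,y_n)\in (V_H)^n$, the vertices $(y_1,\dots,y_n)x_i$ with $i=1,\dots,n$ induce a copy of $G$ via type II edges; fixing instead an index $k$ and values $y_1,\dots,y_{k-1},y_{k+1},\dots,y_n$, the vertices $(y_1,\dots,y_{k-1},w,y_{k+1},\dots,y_n)x_k$ with $w\in V_H$ induce a copy of $H$ via type I edges. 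Hence both $G$ and $H$ inherit bipartiteness.

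For the converse, assume $G$ and $H$ are bipartite with bipartitions $V_G = A_G\sqcup B_G$ and $V_H=A_H\sqcup B_H$. Let $\pi_G : V_G \to \{0,1\}$ and $\pi_H : V_H \to \{0,1\}$ be the corresponding parity functions, so that adjacency forces a change of parity. The plan is to produce an explicit $2$-coloring of $V_{G\wr H}$ by defining, for $w=(y_1,\dots,y_n)x_i$,
\[
\pi(w) \;=\; \pi_G(x_i) \;+\; \sum_{j=1}^{n} \pi_H(y_j) \pmod{2}.
\]
One then checks the two edge types separately. For an edge of type II, the lamp configuration is unchanged and only the base position changes from $v$ to $v'$ with $v\sim v'$ in $G$, so $\pi$ changes by $\pi_G(v)-\pi_G(v') \equiv 1 \pmod{2}$. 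For an edge of type I, the base position is fixed and only one coordinate of the lamp configuration changes, from $f(\overline{v})$ to $f'(\overline{v})$ with $f(\overline{v})\sim f'(\overline{v})$ in $H$; the sum in the definition of $\pi$ changes by $\pi_H(f(\overline{v}))-\pi_H(f'(\overline{v})) \equiv 1 \pmod{2}$. In either case $\pi$ flips along an edge, so the preimages $\pi^{-1}(0)$ and $\pi^{-1}(1)$ give a bipartition of $G\wr H$.

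The only delicate point is getting the definition of $\pi$ right so that both edge types are handled simultaneously; the main obstacle would be to be fooled into using only $\pi_G(x_i)+\pi_H(y_i)$, which fails on type I edges changing a coordinate $y_j$ with $j\neq i$. Summing the parities of all lamp coordinates circumvents this issue and makes the verification essentially immediate, since a type I edge affects exactly one summand and type II edges affect none.
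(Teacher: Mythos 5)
Your proof is correct. The forward implication is exactly the paper's: both arguments exhibit $G$ and $H$ as induced subgraphs of $G\wr H$ via Remark \ref{sottografo} and use the hereditariness of bipartiteness. For the converse the paper takes a slightly different route: it again invokes Remark \ref{sottografo} to view $G\wr H$ as a subgraph of the Cartesian product $H^{\Box n}\Box G$, and then cites the classical fact that a Cartesian product of bipartite graphs is bipartite. Your explicit $2$-coloring $\pi(w)=\pi_G(x_i)+\sum_{j=1}^n\pi_H(y_j)\pmod 2$ is precisely the restriction to $G\wr H$ of the canonical bipartition of $H^{\Box n}\Box G$ (sum of coordinate parities), so the two arguments are equivalent in substance; yours is self-contained and makes the edge-type case check explicit, while the paper's is shorter because it reuses the embedding of Remark \ref{sottografo} in both directions and delegates the parity bookkeeping to a cited classical result. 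Your closing remark about the pitfall of using only $\pi_G(x_i)+\pi_H(y_i)$ is a sound observation and correctly identifies why all lamp coordinates must enter the sum.
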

\begin{proof}
It is a classical fact that the Cartesian product of two graphs is bipartite if and only if each factor is bipartite, and that all subgraphs of a bipartite graph are bipartite. Then
$$
G \mbox{ and } H \mbox{ bipartite}\implies H^{\Box n}\Box G  \mbox{ bipartite}\implies G\wr H \mbox{ is bipartite},
$$
since by virtue of Remark \ref{sottografo} the graph $G\wr H$ is isomorphic to a subgraph of $H^{\Box n}\Box G$. On the other hand,
$$
G\wr H \mbox{ bipartite}\implies G \mbox{ and } H \mbox{ both bipartite},
$$
since, by virtue of Remark \ref{sottografo}, the graphs $G$ and $H$ are isomorphic to subgraphs of $G\wr H$.
\end{proof}

For each positive integer $n$, we will denote by $K_n$ the complete graph on $n$ vertices, by $C_n$ the cyclic graph on $n$ vertices, and by $P_n$ the path graph on $n$ vertices (of length $n-1$).

\begin{example}
In Fig. \ref{figK2P3}, we have represented the graph $K_2\wr P_3$. Observe that this is a bipartite graph, as both the factors are bipartite. Compare with Fig. \ref{figureantipodal1}, where the graph $K_2\wr C_3$ is depicted: this is a non bipartite graph, since the factor $C_3$ is not bipartite.
\begin{figure}[h]
\begin{center}
\includegraphics[width=0.65\textwidth]{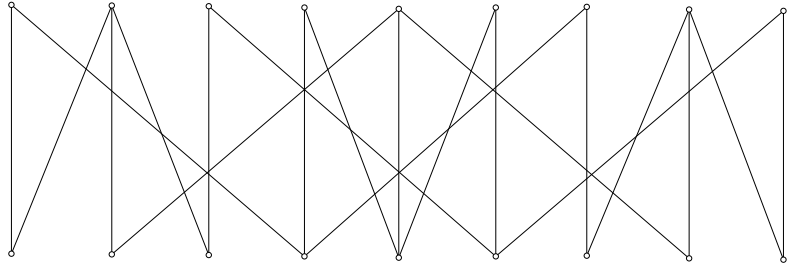}
\end{center}\caption{The graph $K_2 \wr P_3$.}  \label{figK2P3}
\end{figure}
\end{example}

\section{Zagreb indices of a wreath product}\label{sectionzagreb}

The Zagreb indices of a finite simple connected graph $G=(V_G,E_G)$ were introduced by Gutman and Trinajsti\'{c} in \cite{defizagreb} (see also the more recent survey \cite{trenta}). The {\it first Zagreb index} $M_1(G)$ is defined as
$$
M_1(G) = \sum_{v\in V_G}(\deg v)^2,
$$
whereas the {\it second Zagreb index} $M_2(G)$ is defined as
$$
M_2(G) = \sum_{u\sim v}\deg u \deg v.
$$
It follows from the definition that, if $G=(V_G,E_G)$ is a regular graph of degree $r_G$ on $n$ vertices, so that one has $|E_G|= \frac{nr_G}{2}$, the Zagreb indices are easily given by:
$$
M_1(G)  = nr_G^2 \qquad \qquad M_2(G) = \frac{nr_G^3}{2}.
$$
The aim of this section is to investigate the Zagreb indices of the wreath product $G \wr H$ of two finite simple connected graphs $G=(V_G,E_G)$ and $H=(V_H,E_H)$. We have already remarked that, if $G$ is $r_G$-regular on $n$ vertices and $H$ is $r_H$-regular on $m$ vertices, then the wreath product $G\wr H$ is a regular graph of degree $r_G+r_H$ on $nm^n$ vertices. If this is the case, then one gets for the wreath product:
\begin{eqnarray}\label{zagrebwreathregular}
M_1(G\wr H) = nm^n(r_G+r_H)^2 \qquad \qquad M_2(G\wr H) = \frac{nm^n(r_G+r_H)^3}{2}.
\end{eqnarray}

The following result gives an explicit description of the first and second Zagreb index of a wreath product $G\wr H$ in terms of the Zagreb indices of the factor graphs, in the more general case where the graphs $G$ and $H$ need not to be regular.

\begin{theorem}\label{theoremzagreb}
Let $G=(V_G,E_G)$ and $H=(V_H,E_H)$ be two finite simple connected graphs. Suppose $|V_G|=n$ and $|V_H|=m$. Then
\begin{eqnarray}\label{enunciatozagreb1}
M_1(G\wr H) = m^{n-1}(mM_1(G)+nM_1(H)+8|E_G||E_H|).
\end{eqnarray}
and
\begin{eqnarray}\label{enunciatozagreb2}
M_2(G\wr H) &=& 3m^{n-1}|E_H|M_1(G) + 2|E_G|m^{n-1}M_1(H) + m^nM_2(G)\\
 &+& nm^{n-1}M_2(H) + 4m^{n-2}|E_G||E_H|^2.\nonumber
 \end{eqnarray}
\end{theorem}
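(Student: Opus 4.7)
The starting point is an explicit formula for the degree of each vertex of $G \wr H$. From Definition \ref{defierschler}, a vertex $w=((y_1,\ldots,y_n),x_i)$ has $\deg_G(x_i)$ type II edges (moves in the base graph $G$, with all lamps unchanged) and $\deg_H(y_i)$ type I edges (changes of the lamp at the current position $x_i$). Hence
$$\deg_{G\wr H}(w) = \deg_G(x_i) + \deg_H(y_i).$$
All subsequent computations reduce to summing polynomial expressions in $\deg_G(x_i)$ and $\deg_H(y_i)$ against the measure on the vertex/edge sets of $G\wr H$, exploiting the two identities $\sum_{v}\deg(v)=2|E|$ and $\sum_{v}\deg(v)^2 = M_1$, together with the edge-sum identity $\sum_{\{a,b\}\in E_H}(\deg_H(a)+\deg_H(b))=M_1(H)$.

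For the first Zagreb index I would expand
$$M_1(G\wr H) = \sum_{i=1}^n \sum_{(y_1,\ldots,y_n)\in (V_H)^n} \bigl(\deg_G(x_i)+\deg_H(y_i)\bigr)^2$$
into the three pieces $(\deg_G(x_i))^2$, $2\deg_G(x_i)\deg_H(y_i)$, $(\deg_H(y_i))^2$. Each piece involves only one (or neither) of the coordinates $y_i$, so the remaining $n-1$ coordinates contribute a trivial factor $m^{n-1}$. The three contributions become $m^n M_1(G)$, $4m^{n-1}|E_H|\sum_i \deg_G(x_i) = 8m^{n-1}|E_G||E_H|$ and $nm^{n-1}M_1(H)$; factoring out $m^{n-1}$ yields \eqref{enunciatozagreb1}.

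For the second Zagreb index I would split the edge sum according to the edge type: $M_2(G\wr H)=\Sigma_{I}+\Sigma_{II}$. An edge of type I is indexed by a triple $(i,\{a,b\},(y_j)_{j\neq i})$ with $i\in\{1,\ldots,n\}$, $\{a,b\}\in E_H$ and $y_j\in V_H$ for $j\neq i$; the product of the two endpoint degrees is $(\deg_G(x_i)+\deg_H(a))(\deg_G(x_i)+\deg_H(b))$. Expanding and using $\sum_{\{a,b\}\in E_H}(\deg_H(a)+\deg_H(b))=M_1(H)$ and $\sum_{\{a,b\}\in E_H}\deg_H(a)\deg_H(b)=M_2(H)$ gives
$$\Sigma_I = m^{n-1}|E_H|M_1(G) + 2m^{n-1}|E_G|M_1(H) + nm^{n-1}M_2(H).$$
An edge of type II is indexed by a pair $(\{x_i,x_j\},(y_1,\ldots,y_n))$; the endpoint-degree product expands into four monomials. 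The first gives $m^n M_2(G)$; the two mixed monomials collapse, thanks to the handshake identity in $H$, to $2m^{n-1}|E_H|M_1(G)$; and the last monomial $\deg_H(y_i)\deg_H(y_j)$ factorises over the two distinct coordinates (here $i\neq j$ is essential), producing $4m^{n-2}|E_G||E_H|^2$. Summing $\Sigma_{II}=m^nM_2(G)+2m^{n-1}|E_H|M_1(G)+4m^{n-2}|E_G||E_H|^2$ with $\Sigma_I$ yields exactly \eqref{enunciatozagreb2}.

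The proof is conceptually elementary: the only real obstacle is a careful bookkeeping of the two edge types and of the indices that are summed freely versus those constrained by the current edge. The key structural features that make everything work are the additive decomposition of the degree and the fact that type II edges live between distinct base-vertices (so that $y_i$ and $y_j$ are genuinely independent summation variables in $\Sigma_{II}$).
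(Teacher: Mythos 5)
Your proposal is correct and follows essentially the same route as the paper: the additive degree formula $\deg_{G\wr H}((y_1,\ldots,y_n)x_i)=\deg_G(x_i)+\deg_H(y_i)$, a direct expansion for $M_1$, and a split of $M_2$ into type I and type II edge contributions, reduced via the handshake identity and the edge-sum form of $M_1$. The only (immaterial) difference is that you sum over unordered edges directly, whereas the paper sums over ordered adjacent pairs and inserts a factor $\tfrac{1}{2}$.
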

\begin{proof}
First of all, note that the degree of the vertex $v=(y_1, y_2, \ldots, y_i, \ldots, y_n)x_i$ of $G\wr H$ is $\deg x_i + \deg y_i$. By definition, we have:
\begin{eqnarray*}
M_1(G\wr H) &=& m^{n-1}\sum_{x_i\in V_G, y_i\in V_H}(\deg x_i + \deg y_i)^2\\
&=& m^{n-1}\sum_{x_i\in V_G, y_i\in V_H} ((\deg x_i)^2+(\deg y_i)^2+2\deg x_i \deg y_i) \nonumber\\
 &=& m^{n-1}(mM_1(G)+nM_1(H)+8|E_G||E_H|),\nonumber
\end{eqnarray*}
where we have used that the entries $y_j$, with $j\neq i$, can vary in $V_H$, together with the fundamental identity
\begin{eqnarray}\label{fundamentalidentity}
\sum_{v\in V_G} \deg v = 2|E_G|,
\end{eqnarray}
which holds for any graph $G$.

In order to investigate the second Zagreb index, we study the contributions to $M_2(G\wr H)$ given by edges of type I and of type II, separately. We recall that edges of type I have the form
$$
(y_1,y_2,\ldots,y_i,\ldots, y_n)x_i \sim (y_1,y_2,\ldots, \overline{y}_i, \ldots, y_n)x_i, \ \textrm{with } y_i\sim \overline{y}_i \ \textrm{in } H.
$$
Since the entries $y_j$, with $j\neq i$, can vary in $V_H$, and we do not want to consider each edge twice, we have to take into account a factor $\frac{m^{n-1}}{2}$. Then we have:
\begin{eqnarray}
M_2(G\wr H)^I &=& \frac{m^{n-1}}{2}\sum_{x_i\in V_G, y_i\in V_H, \overline{y}_i\sim y_i} (\deg y_i + \deg x_i)(\deg \overline{y}_i + \deg x_i) \nonumber\\ &=& \frac{m^{n-1}}{2}\sum_{x_i\in V_G}\sum_{y_i\in V_H, \overline{y}_i\sim y_i}(\deg y_i\deg \overline{y}_i + \deg y_i\deg x_i + \deg x_i\deg \overline{y}_i+(\deg x_i)^2)\nonumber\\
&=& \frac{m^{n-1}}{2}\left( n \sum_{y_i\in V_H, \overline{y}_i\sim y_i} \deg y_i \deg\overline{y}_i + 2 |E_G|\sum_{y_i\in V_H, \overline{y}_i\sim y_i}\deg y_i \right.\nonumber\\
&+&2\left. |E_G|\sum_{y_i\in V_H, \overline{y}_i\sim y_i}\deg \overline{y}_i + M_1(G)\sum_{y_i\in V_H, \overline{y}_i\sim y_i} 1 \right) \nonumber\\
&=& \frac{m^{n-1}}{2}\left( 2n M_2(H) + 2 |E_G|M_1(H)+2 |E_G|\sum_{y_i\in V_H} (\deg y_i)^2 +2|E_H| M_1(G) \right) \nonumber\\
&=& \frac{m^{n-1}}{2}(2n M_2(H) + 4 |E_G|M_1(H) +2|E_H| M_1(G))\nonumber,
\end{eqnarray}
where we have repeatedly used the definition of the Zagreb indices and the identity \eqref{fundamentalidentity}. Next, we recall that edges of type II have the form
$$
(y_1,y_2,\ldots,y_i,\ldots, y_n)x_i \sim (y_1,y_2,\ldots, y_j, \ldots, y_n)x_j, \ \textrm{with } x_i\sim x_j \ \textrm{in } G.
$$
Since the entries $y_h$, with $h\neq i,j$, can vary in $V_H$, and we do not want to consider each edge twice, we have to take into account a factor $\frac{m^{n-2}}{2}$. Then we have:
\begin{eqnarray}
M_2(G\wr H)^{II} &=& \frac{m^{n-2}}{2}\left( \sum_{x_i\in V_G, x_j\sim x_i} \sum_{y_i, y_j\in V_H} (\deg x_i + \deg y_i)(\deg x_j + \deg y_j)\right) \nonumber\\
&=& \frac{m^{n-2}}{2}\left(\sum_{x_i\in V_G, x_j\sim x_i} \sum_{y_i, y_j\in V_H} (\deg x_i\deg x_j + \deg x_i\deg y_j \right. \nonumber\\
&+& \left. \deg y_i\deg x_j + \deg y_i\deg y_j \right)\nonumber\\
&=& \frac{m^{n-2}}{2}\left( m^2 \sum_{x_i\in V_G, x_j\sim x_i} \deg x_i \deg x_j + 2 m|E_H|\sum_{x_i\in V_G, x_j\sim x_i}\deg x_i \right.\nonumber\\
&+& \left. 2m|E_H|\sum_{x_i\in V_G, x_j\sim x_i} \deg x_j + \sum_{x_i\in V_G, x_j\sim x_i} 4|E_H|^2 \right) \nonumber
\end{eqnarray}
\begin{eqnarray*}
&=& \frac{m^{n-2}}{2}\left( 2m^2 M_2(G) + 4 m|E_H|\sum_{x_i\in V_G}(\deg x_i)^2 +8|E_G||E_H|^2\right) \nonumber\\
&=& \frac{m^{n-2}}{2}(2m^2 M_2(G) + 4m |E_H|M_1(G) +8|E_G||E_H|^2)\nonumber,
\end{eqnarray*}
where we have used again the definition of the Zagreb indices and the identity \eqref{fundamentalidentity}.
Then, if we glue together the two contributions, we have
$$
M_2(G\wr H) = M_2(G\wr H)^{I} + M_2(G\wr H)^{II}
$$
and we obtain the assert.
\end{proof}

\begin{remark}\rm
Note that, in the case of an $r_G$-regular graph $G=(V_G,E_G)$ and an $r_H$-regular graph $H=(V_H,E_H)$, the formulas \eqref{enunciatozagreb1} and \eqref{enunciatozagreb2} coincide with the formulas given in \eqref{zagrebwreathregular}.
\end{remark}




\section{Distances in a wreath product}\label{sectiondistances}

The geodesic distance in a wreath product has been especially studied for Cayley graphs by the investigation of the Word length for wreath products of finite and infinite groups \cite{burillo, tara,olga}. In any  approach it appears an NP-hard problem: the Traveling Salesman Problem (TSP). It is one of the most intensively studied problem in optimization.
The common strategy of these works to avoid the complexity of the TSP is to consider analog easier problems that are, in some sense, approximations for the TSP: it works because most invariants in Geometric Group Theory are defined up to quasi-isometry of groups (and therefore Cayley graphs). \\
Our approach here is purely combinatorial: we push it as far as possible, in the general case, and then we apply the general results to some special classes of graphs. This yields an explicit description of the antipodal graph of a wreath product (Section \ref{santi}) and of some distance-based invariants like the Wiener index (Section \ref{sectionwiener}) and the Szeged index (Section \ref{sectionszeged}).

We start our study about distances in a wreath product by introducing the following definition.
\begin{defi}\label{roa}
Let $G=(V_G,E_G)$ be a graph and let $A\subseteq V_G$. We define a map $\rho_A$ on $V_G \times V_G$ such that, for any $u,v\in V_G$, the number $\rho_A(u,v)$ is the length of a shortest path starting from $u$, arriving at $v$, visiting all vertices of $A$. In the case the subset $A$ is the whole $V_G$, we write $d_{Ha}:=\rho_{V_G}$.
\end{defi}

Notice that, for each $u,v\in V_G$, the value $\rho_A(u,v)$ is a nonnegative integer, which represents the length of a solution for an instance of the TSP, where repetitions of vertices are allowed. The map $\rho_A$ is symmetric by definition, that is, $\rho_A(u,v)=\rho_A(v,u)$ for any $u,v\in V_G$. Moreover $\rho_{\emptyset}(u,v)=d_G(u,v)$, that is, if $A=\emptyset$, then $\rho_A$ coincides with the usual geodesic distance. The following lemma holds.

\begin{lemma}\label{monotonia}
For any $A,B\subseteq V_G$, for any $u,v,w\in V_G$,
$$\rho_{A\cup B}(u,v)\leq \rho_{A}(u,w)+ \rho_{B}(w,v).$$
\end{lemma}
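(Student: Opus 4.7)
The plan is to prove this by explicit concatenation of walks. The quantity $\rho_A(u,v)$ is defined as the length of a shortest walk (with repetitions allowed) in $G$ that starts at $u$, ends at $v$, and passes through every vertex of $A$. So I have two optimal objects to work with: a walk realizing $\rho_A(u,w)$ and one realizing $\rho_B(w,v)$, and the natural candidate to bound $\rho_{A\cup B}(u,v)$ is their concatenation.

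More precisely, first I would fix an optimal walk $P_1 = (u = u_0, u_1, \ldots, u_s = w)$ in $G$ of length $s = \rho_A(u,w)$ which visits every vertex of $A$, and an optimal walk $P_2 = (w = w_0, w_1, \ldots, w_t = v)$ in $G$ of length $t = \rho_B(w,v)$ which visits every vertex of $B$. Such walks exist by Definition \ref{roa}. Next I would form the concatenation
\begin{equation*}
P := (u_0, u_1, \ldots, u_{s-1}, u_s = w = w_0, w_1, \ldots, w_t),
\end{equation*}
which is a walk in $G$ from $u$ to $v$ of length $s + t$. Since $P_1$ visits all vertices of $A$ and $P_2$ visits all vertices of $B$, the walk $P$ visits every vertex of $A \cup B$. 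Hence $P$ is a valid candidate in the minimization defining $\rho_{A\cup B}(u,v)$, and we obtain
\begin{equation*}
\rho_{A\cup B}(u,v) \leq s + t = \rho_A(u,w) + \rho_B(w,v),
\end{equation*}
which is the desired inequality.

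I do not expect any real obstacle here: the statement is essentially a triangle-inequality-type subadditivity property for the cost of constrained walks, and the proof is just the obvious concatenation argument. The only thing that deserves a brief remark is that walks (rather than simple paths) are allowed in the definition of $\rho_A$, so concatenation does not create any legality issue even when $P_1$ and $P_2$ repeat vertices or share vertices other than the gluing point $w$.
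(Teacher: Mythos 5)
Your proof is correct and is exactly the paper's argument: the paper's one-line proof also just concatenates a shortest $u$--$w$ path visiting $A$ with a shortest $w$--$v$ path visiting $B$ to produce a $u$--$v$ path visiting $A\cup B$. Your write-up merely spells out the same concatenation in more detail, including the (valid) observation that repeated vertices cause no issue.
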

\begin{proof}
The union of a path from $u$ to $w$ visiting $A$, and of a path from $w$ to $v$ visiting $B$, is a path from $u$ to $v$ visiting $A\cup B$.
\end{proof}

In particular, we have:
\begin{itemize}
\item $\rho_{A}(u,v)\leq \rho_{A}(u,w)+ \rho_{A}(w,v)\;\;\;\;$ (Triangular inequality for $\rho_A$);
\item $B\subseteq A \implies \rho_B(u,v)\leq \rho_A(u,v)\;\;\;\;$ (Monotonicity of $\rho_A$);
\item $\rho_{A}(u,v)\leq \rho_{A}(u,w)+ d_G(w,v)$.
\end{itemize}
The following is another characterization of $\rho_A$. We denote by $Sym(n)$ the symmetric group on $n$ elements.

\begin{prop}\label{permu}
Let $\emptyset \neq A\subseteq V_G$, with $A=\{a_1,\ldots, a_k\}$. Then
$$
\rho_A(u,v)= \min_{\sigma\in Sym(k)} \left\{ d_G(u,a_{\sigma(1)})+\sum_{i=1}^{k-1} {d_G(a_{\sigma(i)}, a_{\sigma(i+1)})} + d_G(a_{\sigma(k)},v) \right\}.
$$
\end{prop}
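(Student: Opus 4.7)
The plan is to establish both inequalities between $\rho_A(u,v)$ and the minimum $M$ appearing on the right-hand side.

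For the bound $\rho_A(u,v)\le M$, I would fix an arbitrary permutation $\sigma\in Sym(k)$ and explicitly exhibit a walk witnessing the corresponding value. Concretely, concatenate a geodesic in $G$ from $u$ to $a_{\sigma(1)}$, then for each $i=1,\dots,k-1$ a geodesic from $a_{\sigma(i)}$ to $a_{\sigma(i+1)}$, and finally a geodesic from $a_{\sigma(k)}$ to $v$. The resulting walk starts at $u$, ends at $v$, visits every element of $A$, and has length exactly $d_G(u,a_{\sigma(1)})+\sum_{i=1}^{k-1}d_G(a_{\sigma(i)},a_{\sigma(i+1)})+d_G(a_{\sigma(k)},v)$. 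Since $\rho_A(u,v)$ is the minimum length over all walks of this kind, taking the minimum over $\sigma\in Sym(k)$ yields $\rho_A(u,v)\le M$.

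For the reverse inequality $\rho_A(u,v)\ge M$, I would start from an optimal walk $W=(w_0,w_1,\dots,w_\ell)$ with $w_0=u$, $w_\ell=v$, $\{a_1,\dots,a_k\}\subseteq\{w_0,\dots,w_\ell\}$, and length $\ell=\rho_A(u,v)$. For each $a_j\in A$, let $t_j$ be the first index at which $a_j$ appears in $W$; since the $a_j$ are distinct vertices, the indices $t_1,\dots,t_k$ are pairwise distinct, so ordering $A$ according to the increasing values of $t_j$ defines a permutation $\sigma\in Sym(k)$. Split $W$ at the indices $t_{\sigma(1)}<\cdots<t_{\sigma(k)}$ into $k+1$ consecutive subwalks, joining, respectively, $u$ to $a_{\sigma(1)}$, $a_{\sigma(i)}$ to $a_{\sigma(i+1)}$ for $i=1,\dots,k-1$, and $a_{\sigma(k)}$ to $v$. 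Each of these is a walk in $G$ between its endpoints, hence its length is at least the corresponding geodesic distance; summing these $k+1$ bounds gives $\ell\ge d_G(u,a_{\sigma(1)})+\sum_{i=1}^{k-1}d_G(a_{\sigma(i)},a_{\sigma(i+1)})+d_G(a_{\sigma(k)},v)\ge M$.

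The argument is essentially bookkeeping and does not present any serious obstacle; the only points that require brief verification are that the first-occurrence indices of distinct vertices in $A$ are themselves distinct (so the extraction genuinely produces a permutation of $\{1,\dots,k\}$), and that the split of $W$ at these indices produces honest walks in $G$, so that $d_G$ may be bounded below termwise. Both are immediate from the definitions, and together they close the argument.
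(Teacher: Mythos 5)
Your proposal is correct and follows essentially the same two-inequality argument as the paper: geodesic concatenation for the upper bound, and extraction of a permutation from the first-occurrence order of the vertices of $A$ along an optimal walk for the lower bound. Your write-up is in fact slightly more careful than the paper's (which leaves the splitting and termwise bounding implicit), but there is no substantive difference.
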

\begin{proof}
For any path from $u$ to $v$ visiting the vertices $a_1,\ldots, a_k$ we define a permutation $\sigma\in Sym(k)$ in this way: $\sigma^{-1}(i)=j$ if $a_i$ is the $j$-th
   vertex of $A$ visited for the first time in
   the path. The length of the path cannot be less than
    $d_G(u,a_{\sigma(1)})+\sum_{i=1}^{k-1}
     {d_G(a_{\sigma(i)},
     a_{\sigma(i+1)})}
      + d_G(a_{\sigma(k)},v)
      $. Thus
$$
\rho_A(u,v)\geq \min_{\sigma\in Sym(k)} \{ d_G(u,a_{\sigma(1)})+\sum_{i=1}^{k-1} {d_G(a_{\sigma(i)}, a_{\sigma(i+1)})} + d_G(a_{\sigma(k)},v) \}.
$$
Conversely, for any permutation $\sigma\in Sym(k)$, we construct a path that is the union of a shortest path from $u$ to $a_{\sigma(1)}$, with a shortest path from $a_{\sigma(1)}$ to $a_{\sigma(2)}$, and so on. Since the length of this path is exactly
$d_G(u,a_{\sigma(1)})+\sum_{i=1}^{k-1} {d_G(a_{\sigma(i)}, a_{\sigma(i+1)})} + d_G(a_{\sigma(k)},v)$, we have proven also the inverse inequality, and the proof is completed.
Notice that in the particular case $k=1$, with $A=\{a\}$, we have $\rho_A(u,v)=d_G(u,a)+d_G(a,v)$.
\end{proof}

As a corollary, we have the following bounds for $\rho_A$ in terms of the cardinality of $A$ and of the diameter of the graph $G$. More precisely, we have:
\begin{equation}\label{boundH}
\max\{d_G(u,v),|A|-1\} \leq \rho_{A}(u,v)\leq diam(G) (|A|+1).
\end{equation}
In particular, the lower bound is reached when $A$ only contains vertices that appear in a shortest path between $u$ and $v$; the upper bound in reached, for example, in the complete graph, as we will see in Example \ref{K_n}.
The geodesic distance $d_G$ is clearly invariant under automorphism; on the other hand, for the map $\rho_A$ the situation is different, as the following proposition shows.
\begin{prop}\label{inva}
For any $\phi\in Aut(G)$, any $A\subseteq V_G$, and for all vertices $u,v\in V_G$,
$$\rho_A(u,v)=\rho_{\phi(A)}(\phi(u),\phi(v)).$$
\end{prop}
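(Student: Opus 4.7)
The plan is to reduce the statement to the characterization of $\rho_A$ in terms of graph distances given in Proposition \ref{permu}, and then to invoke the fact that an automorphism preserves the geodesic distance $d_G$.

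First I would dispose of the degenerate case $A=\emptyset$ separately: here $\rho_A = d_G$, and since $\phi\in Aut(G)$ is a bijection that preserves adjacency, it maps any path in $G$ to a path of the same length, so $d_G(u,v)=d_G(\phi(u),\phi(v))$; noting that $\phi(\emptyset)=\emptyset$ yields the claim.

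Now suppose $A=\{a_1,\ldots,a_k\}\neq\emptyset$, so that $\phi(A)=\{\phi(a_1),\ldots,\phi(a_k)\}$ still has $k$ distinct elements ($\phi$ is a bijection). By Proposition \ref{permu},
\begin{equation*}
\rho_A(u,v)= \min_{\sigma\in Sym(k)} \left\{ d_G(u,a_{\sigma(1)})+\sum_{i=1}^{k-1} d_G(a_{\sigma(i)}, a_{\sigma(i+1)}) + d_G(a_{\sigma(k)},v) \right\}.
\end{equation*}
The key step is that, since $\phi$ preserves $d_G$, for every $\sigma\in Sym(k)$ one has
\begin{equation*}
d_G(u,a_{\sigma(1)})+\sum_{i=1}^{k-1} d_G(a_{\sigma(i)}, a_{\sigma(i+1)}) + d_G(a_{\sigma(k)},v)
\end{equation*}
equal to
\begin{equation*}
d_G(\phi(u),\phi(a_{\sigma(1)}))+\sum_{i=1}^{k-1} d_G(\phi(a_{\sigma(i)}), \phi(a_{\sigma(i+1)})) + d_G(\phi(a_{\sigma(k)}),\phi(v)).
\end{equation*}
Taking the minimum over $\sigma\in Sym(k)$ on both sides and applying Proposition \ref{permu} to the set $\phi(A)$ (with its enumeration $\phi(a_1),\ldots,\phi(a_k)$) then gives $\rho_A(u,v)=\rho_{\phi(A)}(\phi(u),\phi(v))$.

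There is no real obstacle here; the only small point to check is that the minimum on the right really is $\rho_{\phi(A)}(\phi(u),\phi(v))$, which requires the enumeration of $\phi(A)$ to consist of $k$ distinct vertices — and this is guaranteed because $\phi$ is a bijection on $V_G$. The automorphism-invariance of $d_G$ is the whole engine of the proof; everything else is just unpacking the definition.
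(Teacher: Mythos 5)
Your argument is correct, but it takes a different route from the paper. The paper argues directly on witnessing paths: it takes a minimal path from $u$ to $v$ visiting $A$, observes that its image under $\phi$ is a path from $\phi(u)$ to $\phi(v)$ visiting $\phi(A)$ of the same length (giving $\rho_A(u,v)\geq\rho_{\phi(A)}(\phi(u),\phi(v))$), and then applies $\phi^{-1}$ to obtain the reverse inequality. You instead reduce everything to the closed-form characterization of Proposition \ref{permu} and to the automorphism-invariance of $d_G$, matching the two minima term by term over $Sym(k)$. Both proofs are sound; yours is legitimate since Proposition \ref{permu} precedes Proposition \ref{inva} in the paper, and your separate treatment of $A=\emptyset$ (where Proposition \ref{permu} does not apply) is the right precaution. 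The trade-off is that the paper's path-image argument is self-contained and shorter, needing only the definition of $\rho_A$, whereas your reduction buys a purely algebraic verification at the cost of invoking the TSP-style formula and the (standard, but itself two-sided) fact that $\phi$ preserves $d_G$ --- note that your one-line justification of $d_G(u,v)=d_G(\phi(u),\phi(v))$ really gives only one inequality and tacitly uses $\phi^{-1}$ for the other, which is exactly the same two-sided device the paper uses at the level of $\rho_A$ itself.
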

\begin{proof}
Let $u=x_0\sim x_1 \sim x_2 \sim \cdots \sim x_{k-1}\sim x_k=v$ be a minimal path from $u$ to $v$ visiting $A$, so that $\rho_A(u,v)=k$.
Then the path $\phi(x_0)\sim \phi(x_1)\sim\cdots\sim \phi(x_k)$ is  a path from   $\phi(u)$ to  $\phi(v)$ visiting  $ \phi(A)$, so that ${\rho_A(u,v)\geq\rho_{\phi(A)}(\phi(u),\phi(v))}$. Now by applying the inverse automorphism $\phi^{-1}$ we get the inverse inequality, and the claim follows.
\end{proof}

We recall the following classical definitions.
\begin{defi}
A graph $G=(V_G,E_G)$ is \emph{Hamiltonian} if it contains a \emph{Hamiltonian cycle}, that is, a cycle that visits each vertex of $G$ exactly once. A graph $G=(V_G,E_G)$ is \emph{Hamilton-connected} if, for every $u,v \in V_G$,  there exists a \emph{Hamiltonian path} from $u$ to $v$, that is, a path from $u$ to $v$ visiting each vertex of $G$ exactly once.
\end{defi}
Note that, if $G$ is Hamilton-connected, then it is also a Hamiltonian graph. The viceversa is false, the cyclic graph being a simple counterexample (for a detailed analysis see Example \ref{C_n}). Moreover, all bipartite graphs are not Hamilton-connected; on the other hand, the complete graph is Hamilton-connected.

\begin{remark}\label{hamidis}
The property of being Hamiltonian or Hamilton-connected is detectable by means of the map $d_{Ha}$. More precisely,
if $G=(V_G,E_G)$ is a connected graph with $|V_G|=n$, we have:
\begin{equation}\begin{split} \label{HC}
\exists u\in V_{G} \mbox{ (equivalently $\forall u\in V_{G}$)}: \;d_{Ha}(u,u)=n&\iff G \mbox{ is Hamiltonian},
\end{split}\end{equation}
\begin{equation}\begin{split} \label{HC1}
\forall u,v\in V_{G},\;d_{Ha}(u,v)=
\begin{cases}
n-1 \; &\mbox{if } u\neq v \\
n \; &\mbox{if } u=v.
\end{cases} &\iff G \mbox{ is Hamilton-connected}.
\end{split}\end{equation}
\end{remark}

In our setting the map $d_{Ha}$ plays the role of a distance in some sense: we can reasonably define a notion of Hamiltonian eccentricity and Hamiltonian diameter replacing the geodesic distance $d_G$ with the map $d_{Ha}$.
\begin{defi}
For any $u\in V_G$, the \emph{Hamiltonian eccentricity} of the vertex $u$ is $e_{G,Ha}(u):=\max\{d_{Ha}(u,v),\; v\in V_G\}$. Similarly, the \emph{Hamiltonian diameter} of the graph $G$ is $diam_{Ha}(G):=\max\{e_{G,Ha}(u),\; u\in V_G\}$.
\end{defi}

In particular, if a finite simple connected graph $G=(V_G,E_G)$, with $|V_G|=n$, is Hamilton-connected, then $diam_{Ha}(G)=n$ and all the shortest paths starting and ending at the same vertex, visiting any other vertex, realize the Hamiltonian diameter.

\begin{example}\label{K_n}
Let $G=K_n$ be the complete graph on $n$ vertices ($n\geq 2$).\\
If $A\subseteq V_{K_n}$, $A\neq\emptyset$, and $u,v\in V_{K_n}$, then it is easy to check that:
\begin{equation}\label{rocompleto}
\rho_A(u,v)=\begin{cases}
|A|+1\; &\mbox{ if } u,v\notin A\\
|A|\;  &\mbox{ if } (u\notin A, v\in A) \mbox{ or }(u\in A, v\notin A)\\
|A|-1\;  &\mbox{ if } u,v\in A , u\neq v\\
|A|\;  &\mbox{ if } u=v\in A, |A|>1\\
0 \;  &\mbox{ if } u=v\in A, |A|=1.\\
\end{cases}
\end{equation}
In particular, the graph $K_n$ is Hamilton-connected, and Equations \eqref{HC} and \eqref{HC1} are verified. Moreover
$$
e_{K_n, Ha}(u)=diam_{Ha}(K_n)=n, \qquad \textrm{ for each } u\in V_{K_n}.
$$
In Fig. \ref{figurecomplete6}, we have considered the complete graph $K_6$, with a fixed labelling of the vertex set, and the subset $A=\{1,2\}$ of $V_{K_6}$. In this case, regarding $\rho_A$  as the $n\times n$ symmetric matrix such that $({\rho_A})_{i,j}=\rho_A(i,j)$, for any $i,j\in \{1,\ldots,6\}$, we get:
$$
\rho_A=\left(
                \begin{array}{cccccc}
                  2 & 1 & 2& 2&2 & 2\\
                 1 &2&  2& 2 &2& 2\\
                  2 & 2& 3& 3 &3 &3 \\
                  2 & 2& 3& 3 &3 &3 \\
                   2 &2&  3& 3 &3& 3\\
                  2 & 2& 3& 3&3 &3 \\
                \end{array}
              \right).
$$
\begin{figure}[h]
\begin{center}
\psfrag{1}{$1$}\psfrag{2}{$2$}\psfrag{3}{$3$}\psfrag{4}{$4$}
\psfrag{5}{$5$}\psfrag{6}{$6$}
\includegraphics[width=0.3\textwidth]{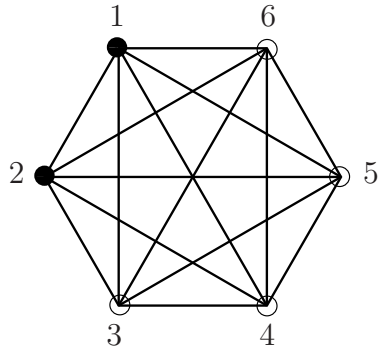}
\end{center}\caption{The complete graph $K_6$ with $A=\{1,2\}$.} \label{figurecomplete6}
\end{figure}
\end{example}

\begin{example}\label{P_n}
Let $G=P_n$ be the path graph on $n$ vertices ($n\geq 2$). Set $V_{P_n}=\{1,2,\ldots, n\}$, with $1\sim 2\sim\cdots\sim n-1\sim n$. With this vertex labeling, the geodesic distance in $P_n$  is given by $d_{P_n}(u,v)=|u-v|$, where $|x|$ denotes the absolute value of the real number $x$.

Notice that, for $A\subseteq V_{P_n}$, $A\neq\emptyset$, the positive integers $\min A$ and $\max A$ are vertices of $P_n$.
In order to describe $\rho_A$, we just have to take into account that
$$
\rho_A=\rho_{\{\min A,\max A\}},
$$
for each $\emptyset\neq A\subseteq V_{P_n}$. This is true because a path visits the set ${\{\min A,\max A\}}$ if and only if it visits the whole set $A$.
Thus, by virtue of Proposition \ref{permu},
$$
\rho_A(u,v)=\min\{|u-\min A|+|v-\max A|, |u-\max A|+ |v-\min A|\}+(\max A-\min A).
$$
If we consider all possible cases, we have, for $u\leq v$ ($\rho_A$ is symmetric):
\begin{equation}\label{patpat}
\rho_A(u,v)=\begin{cases}
v-u\; &\mbox{ if } u< \min A, v> \max A\\
2\max A-(u+v)\;  &\mbox{ if } u< \min A,v\leq \max A\\
-2\min A+(u+v)\;  &\mbox{ if } u\geq \min A, v> \max A\\
2 (\max A-\min A) -(v-u)\;  &\mbox{ if }  \min A\leq u,v\leq \max A.\\
\end{cases}
\end{equation}
In particular, for $A=V_{P_n}$, we get $d_{Ha}(u,v)=2(n-1)-d_{P_n}(u,v)$, so that
$$
e_{P_n, Ha}(u)=diam_{Ha}(P_n)=d_{Ha}(u,u)=2n-2, \qquad \textrm{ for each } u\in V_{P_n}.
$$
This implies that  even if $P_n$ is not Hamilton-connected, the paths whose lengths realize the Hamiltonian eccentricity and Hamiltonian diameter are those starting and ending at the same vertex. In Fig. \ref{figurepath9}, we have represented the path graph $P_9$ and the vertex subset $A=\{3,5,6\}$.
We have $\min A= 3$ and $\max A=6$. By applying \eqref{patpat}, or by explicitly computing the lengths of the paths, we obtain:
$$
\rho_A=\left(
                \begin{array}{cc|cccc|ccc}
                  10 &9& 8&7&6 & 5&6&7&8\\
                  9 &8&  7& 6 &5& 4&5&6&7\\
                      \hline
                  8 & 7& 6& 5 &4&3&4&5&6 \\
                  7 & 6& 5& 6 &5&4 &5&6&7\\
                  6 &5&  4& 5 &6& 5&6&7&8\\
                  5 &4&  3& 4 &5& 6&7&8&9\\
                      \hline
                  6 &5&  4& 5 &6& 7&8&9&10\\
                  7&6&  5& 6 &7& 8&9&10&11\\
                  8 & 7& 6& 7&8&9 &10&11&12\\
                \end{array}
              \right).
$$
\begin{figure}[h]
\begin{center}
\psfrag{1}{$1$}\psfrag{2}{$2$}\psfrag{3}{$3$}\psfrag{4}{$4$}
\psfrag{5}{$5$}\psfrag{6}{$6$} \psfrag{7}{$7$}\psfrag{8}{$8$}\psfrag{9}{$9$}
\includegraphics[width=0.4\textwidth]{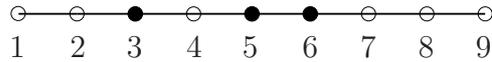}
\end{center}\caption{The path graph $P_9$ with $A=\{3,5,6\}$.} \label{figurepath9}
\end{figure}
\end{example}

\begin{example}\label{C_n}
Let $G=C_n$ be a cycle, with $n>2$. Then:
\begin{equation}\label{distaci}
d_{Ha}(u,v)=
\begin{cases}
n-2+ d_{C_n}(u,v) \; &\mbox{ if } u\neq v \\
n \;&\mbox{ if } u=v.
\end{cases}
\end{equation}
\begin{proof}
As the graph $C_n$ is Hamiltonian, we have  $d_{Ha}(u,u)=n$, for every $u\in V_{C_n}$ (see Remark \ref{hamidis}). When $d_{C_n}(u,v)=1$, so that $u\sim v$ in $C_n$, then it is clear that there exists a path of length $n-1$ from $u$ to $v$ visiting any other vertex of $C_n$. There is no shorter path with this property, since the number of vertices to be visited is $n$ (see lower bound in \eqref{boundH}).

When $d_{C_n}(u,v)>1$, the graph obtained by removing $u$ and $v$ has $2$ connected components.
Therefore a path from $u$ to $v$ visiting any other vertex has to visit at least twice at least one between $u$ and $v$. Let
$$
u=x_0\sim x_1\sim x_2\sim\cdots\sim x_{\ell}=v
$$
be such a path. Suppose that $u$ is visited twice (the other case being analog); then the subpath from $x_1$ to $v$ is a path of length $\ell-1$ visiting any vertex of $C_n$.  Then, either $d_{C_n}(x_1,v)=1$ and we have done, or we can iterate the same argument: a path from $x_1$ to $v$ visiting any vertex of $C_n$ has to visit at least twice at least one among $x_1$ and $v$ and so on. More precisely, by repeating the argument $d_{C_n}(u,v)-1$ times, we obtain a path of length  $\ell- (d_{C_n}(u,v)-1)$ visiting all vertices, so that $\ell- (d_{C_n}(u,v)-1)\geq n-1$. This implies $d_{Ha}(u,v)\geq n-2+ d_{C_n}(u,v)$. Finally, we prove the thesis by noticing that there exists a good path of length exactly $n-2+ d_{C_n}(u,v)$. For instance, let $u^*$ denote the neighbour of $u$ which is closest to $v$: then the union of the shortest path from $u$ to $u^*$ visiting any other vertex and the minimal path from $u^*$ to $v$ is a path from $u$ to $v$ visiting any other vertex of the desired length.
\end{proof}
As a consequence observe that, when $n>3$ (and therefore we are not in the complete case), in light of Remark \ref{hamidis}, the graph $C_n$ is not Hamilton-connected. More precisely, we have
$$
e_{C_n, Ha}(u)= n-2+e_{C_n}(u) \qquad \qquad diam_{Ha}(C_n)=n-2+diam(C_n).
$$
\end{example}
\vspace{0.5cm}

Recall that if we fix an ordering for the vertices of $G$, say $V_G=\{x_1,\ldots,x_n\}$, a vertex $u$ of $G\wr H$ can be written as $u=(y, x_i)$, with $y\in (V_H)^n$ and $i\in \{1,\ldots, n\}$. The $n$-tuple $y$ in $(V_H)^n$ can be regarded as a vertex of the $n$-th Cartesian power of $H$ (see Definition \ref{Cartesiananddirect}). It is well known that the distance in the Cartesian product is nothing but the sum of the distances computed coordinatewise. In other words, given two vertices $y=(y_1,\ldots, y_n)$ and $ y'=(y'_1,\ldots, y'_n)$ in $V_{{H^{\Box n}}}$, with $y_i, y'_i \in V_H$ for each $i=1,\ldots,n$, one has:
$$
d_{{H^{\Box n}}}(y,y')=\sum_{i=1}^n d_H(y_i,y'_i).
$$
Two vertices $y$ and $y'$ of $V_{H^{\Box n}}$, thanks to the ordering of $V_G$, define a subset of $V_G$ denoted by
$$
\delta(y,y'):= \{x_i\in V_G: y_i\neq y'_i \}.
$$
\begin{theorem}\label{dista}
Let $G=(V_G,E_G)$ and $H=(V_H,E_H)$ be two connected graphs. Suppose $|V_G|=n$ and $|V_H|=m$. For any vertices $u=(y_1,\ldots,y_n)x$, $v=(y_1',\ldots, y_n')x'\in G\wr H$, we have:
$$
d_{G\wr H}(u,v)=d_{H^{\Box n}}(y,y')+\rho_{\delta(y,y')}(x,x')= \sum_{i=1}^n d_H(y_i,y'_i) + \rho_{\delta(y,y')}(x,x'),
$$
where $y=(y_1,\ldots,y_n)$ and $y'=(y_1',\ldots,y_n')$.
\end{theorem}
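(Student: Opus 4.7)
The plan is to prove the two inequalities $d_{G\wr H}(u,v)\le \sum_{i=1}^n d_H(y_i,y'_i)+\rho_{\delta(y,y')}(x,x')$ and $d_{G\wr H}(u,v)\ge \sum_{i=1}^n d_H(y_i,y'_i)+\rho_{\delta(y,y')}(x,x')$ separately, exploiting the strict dichotomy between edges of type I and II in Definition \ref{defierschler}.

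For the upper bound, I would construct an explicit path of the claimed length. Write $\delta(y,y')=\{x_{i_1},\ldots,x_{i_k}\}$. Proposition \ref{permu} delivers a permutation $\sigma\in Sym(k)$ realizing $\rho_{\delta(y,y')}(x,x')$ as $d_G(x,x_{i_{\sigma(1)}})+\sum_{j=1}^{k-1}d_G(x_{i_{\sigma(j)}},x_{i_{\sigma(j+1)}})+d_G(x_{i_{\sigma(k)}},x')$. I then lift this to $G\wr H$ by using type II edges along a shortest path in $G$ starting at $x$, visiting $x_{i_{\sigma(1)}},\ldots,x_{i_{\sigma(k)}}$ in this order, and ending at $x'$; at each stop $x_{i_{\sigma(j)}}$ the lamplighter pauses and executes a minimal sequence of $d_H(y_{i_{\sigma(j)}},y'_{i_{\sigma(j)}})$ type I edges, rewriting the lamp at that coordinate from $y_{i_{\sigma(j)}}$ to $y'_{i_{\sigma(j)}}$. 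Since $d_H(y_i,y'_i)=0$ for every $i\notin\delta(y,y')$, summing the contributions gives a path of total length exactly $\sum_{i=1}^n d_H(y_i,y'_i)+\rho_{\delta(y,y')}(x,x')$.

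For the lower bound, pick an arbitrary path $\pi$ from $u$ to $v$ in $G\wr H$ and partition its edges into type I and type II. The subsequence of type II edges, read off on the second coordinate, traces a walk in $G$ from $x$ to $x'$, while a type I edge only modifies the lamp at the vertex currently occupied by the lamplighter. In particular, for every $x_i\in\delta(y,y')$ the lamp at position $i$ must change, which forces the walk on $G$ to visit $x_i$; by Definition \ref{roa}, the number of type II edges in $\pi$ is therefore at least $\rho_{\delta(y,y')}(x,x')$. On the other hand, for each $i$ the sequence of successive values of the $i$-th coordinate recorded along $\pi$ is a walk in $H$ from $y_i$ to $y'_i$ whose length equals the number of type I edges in $\pi$ acting on the $i$-th slot, hence at least $d_H(y_i,y'_i)$. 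Summing over $i$ and adding the two contributions yields the desired inequality.

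The main obstacle I foresee is the bookkeeping in the lower bound: one must rigorously justify that each type I edge can be attributed to exactly one coordinate of $(V_H)^n$, and that this attribution is compatible with the projection of $\pi$ onto $G$, so that the contributions of the two edge types decouple and add cleanly. Once this accounting is in place, the triangle inequality in each factor $H$ and the defining property of $\rho_{\delta(y,y')}$ immediately close the argument.
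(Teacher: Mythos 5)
Your proof is correct and follows essentially the same route as the paper: an explicit path construction for the upper bound, and for the lower bound the decomposition of an arbitrary path into its type I edges (projecting to a walk in $H^{\Box n}$, or coordinatewise to walks in $H$) and its type II edges (projecting to a walk in $G$ from $x$ to $x'$ that must visit $\delta(y,y')$). Your version merely spells out the details the paper leaves as ``easy to see,'' in particular the use of Proposition \ref{permu} to build the lifted path and the per-coordinate accounting of type I edges.
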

\begin{proof}
It is easy to see that there exists a path of length $d_{H^{\Box n}}(y,y')+\rho_{\delta(y,y')}(x,x')= \sum_{i=1}^n d_H(y_i,y'_i) + \rho_{\delta(y,y')}(x,x')$ connecting $u$ and $v$. On the other hand, any path from $u$ to $v$ can be decomposed into the disjoint union of a path with only edges of type I and a path with only edges of type II. The first is actually a path in ${H^{\Box n}}$ from $y$ to $y'$, and then its length has to be not less than $d_{{H^{\Box n}}}(y,y')$. The second is a path from $x$ to $x'$ in $G$ that must visit every vertex in $\delta(y,y')$. By Definition \ref{roa}, its length has to be not less than $\rho_{\delta(y,y')}(x,x')$ and the proof is done.
\end{proof}

\begin{remark}
In the works \cite{burillo,tara} an analog formula is given for the word length of an element in a wreath product of finitely generated groups, via canonical form.
\end{remark}
In the Lamplighter interpretation, in order to connect the two vertices $u$ and $v$, the lamplighter has to visit each position where the colors of the lamp differ (the subset $\delta(y,y')$), starting from $x$, arriving at $x'$, switching the colors of the lamps at each of this position.

\begin{corollary}\label{diam}
For any $u=(y_1,\ldots, y_n)x \in V_{G\wr H} $ we have:
$$
e_{G\wr H}(u)= \sum_{i=1}^n e_H(y_i)+ e_{G,Ha}(x).
$$
In particular: $$diam(G\wr H)= n \, diam(H)+ diam_{Ha}(G).$$
\end{corollary}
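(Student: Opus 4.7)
The plan is to apply Theorem \ref{dista} directly, recognizing that the eccentricity of $u=(y_1,\ldots,y_n)x$ is a maximum over $v=(y_1',\ldots,y_n')x'$ of a sum of two independent-looking pieces, and then establish separately an upper bound and a matching lower bound.

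For the upper bound, I would fix an arbitrary vertex $v=(y_1',\ldots,y_n')x'$ and bound each summand in the formula
$$d_{G\wr H}(u,v)=\sum_{i=1}^n d_H(y_i,y_i')+\rho_{\delta(y,y')}(x,x').$$
The first summand is at most $\sum_{i=1}^n e_H(y_i)$ since $d_H(y_i,y_i')\le e_H(y_i)$ for each $i$. For the second summand, the monotonicity of $\rho_A$ (noted just after Lemma \ref{monotonia}) gives $\rho_{\delta(y,y')}(x,x')\le \rho_{V_G}(x,x')=d_{Ha}(x,x')\le e_{G,Ha}(x)$. Summing yields $d_{G\wr H}(u,v)\le \sum_{i=1}^n e_H(y_i)+e_{G,Ha}(x)$ for every $v$, hence $e_{G\wr H}(u)$ is bounded by the claimed quantity.

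For the matching lower bound, I need to produce a vertex $v$ that realizes all three bounds simultaneously. Pick, for each $i$, a vertex $y_i'\in V_H$ with $d_H(y_i,y_i')=e_H(y_i)$, and pick $x'\in V_G$ with $d_{Ha}(x,x')=e_{G,Ha}(x)$. The crucial point---this is the step I expect to require the most care---is to ensure that $\delta(y,y')=V_G$, so that $\rho_{\delta(y,y')}(x,x')$ coincides with $d_{Ha}(x,x')$ and not with $\rho$ on a strictly smaller set. This is where the standing assumption $|V_H|>1$ together with connectedness of $H$ enters: since every vertex of a finite connected graph with at least two vertices has eccentricity at least $1$, we have $e_H(y_i)\ge 1$, which forces $y_i'\ne y_i$ for all $i$, so that indeed $\delta(y,y')=\{x_1,\ldots,x_n\}=V_G$. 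With this choice, Theorem \ref{dista} gives exactly $d_{G\wr H}(u,v)=\sum_{i=1}^n e_H(y_i)+e_{G,Ha}(x)$, completing the proof of the first formula.

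The diameter formula then follows by maximizing over $u$. Since the variables $y_1,\ldots,y_n,x$ range independently in $V_H\times\cdots\times V_H\times V_G$, we have
$$\mathrm{diam}(G\wr H)=\max_{y,x}\Bigl(\sum_{i=1}^n e_H(y_i)+e_{G,Ha}(x)\Bigr)=\sum_{i=1}^n\max_{y_i\in V_H}e_H(y_i)+\max_{x\in V_G}e_{G,Ha}(x),$$
which equals $n\,\mathrm{diam}(H)+\mathrm{diam}_{Ha}(G)$ by definition of the diameter and the Hamiltonian diameter.
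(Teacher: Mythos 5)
Your proof is correct and follows essentially the same route as the paper: the lower bound by choosing $y_i'$ realizing $e_H(y_i)$ and $x'$ realizing $e_{G,Ha}(x)$ (so that $\delta(y,y')=V_G$), and the upper bound via the monotonicity $\rho_{\delta(y,y')}\le d_{Ha}$. You are merely a bit more explicit than the paper in justifying $\delta(y,y')=V_G$ through $e_H(y_i)\ge 1$, which the paper dismisses as clear.
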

\begin{proof}
For every $i=1,\ldots, n,$ let $y'_i$  be a vertex of $V_H$ satisfying  $d_H(y_i,y'_i)=e_H(y_i)$. Set $y'=(y_1', \ldots, y_n')$.
It is clear that $\delta(y,y')= V_G$ and then, if $x'\in V_G$ is a vertex such that $d_{Ha}(x,x')= e_{G,Ha}(x)$, we have
$d_{G\wr H}((y_1,\ldots,y_n)x,(y_1',\ldots, y_n')x')=  \sum_{i=1}^n e_H(y_i)+ e_{G,Ha}(x)$.\\ Since $d_{Ha}\geq \rho_A$ for any $A\subseteq V_G$ (monotonicity of $\rho_A$),
there is no vertex in $V_{G\wr H}$ with distance from $u$ more than $d_{G\wr H}((y_1',\ldots, y_n')x',u)$.
\end{proof}

\subsection{Antipodal Graph}\label{santi}

The notion of antipodal graph of a given graph was introduced in \cite{antipodal2}. In \cite{antipodal1}, the antipodal of the four classical graph products, namely the Cartesian, the direct, the strong and the lexicographic product, is investigated and described in terms of the antipodal graphs of the factors. The aim of this section is to study the antipodal graph of the wreath product of two given graphs, by using the analysis performed in the first part of this section.
\begin{defi}
Let $G=(V_G, E_G)$ be a connected graph. The \emph{antipodal graph} of $G$, denoted by $A(G)$, is the graph whose vertex set coincides with $V_G$, where two vertices $u$ and $v$ are adjacent if $d_G(u,v) = diam (G)$.
\end{defi}

\begin{example}\label{exclas} The diameter of the complete graph $K_n$ is $1$ and then $A(K_n)=K_n.$ \\
The diameter of the cycle $C_n$ is $\lfloor  \frac{n}{2} \rfloor$, then the antipodal graph $A(C_n)$ is a cycle itself if $n$ is odd, and it is the disjoint union of the edges between opposite vertices if $n$ is even. In formulas, we have:
$$
A(C_n)=\left\{
         \begin{array}{ll}
           C_n & n \hbox{ odd} \\
           \frac{n}{2}K_2 & n \hbox{ even.}
         \end{array}
       \right.
$$
The diameter of the path graph $P_n$ is $n-1$ and it is reached only by the pair $\{1,n\}$: the antipodal graph $A(P_n)=K_2\sqcup \{\mbox{$n-2$ isolated vertices}\}$, where the symbol $\sqcup$ denotes the disjoint union.
\end{example}

Observe that, by Corollary \ref{diam}, the diameter of $G\wr H$ depends on the diameter of $H$ and on the Hamiltonian diameter of $G$. Therefore, in order to describe the antipodal graph of $G\wr H$, we introduce the definition of \emph{Hamiltonian antipodal graph}.

\begin{defi}
The \emph{Hamiltonian antipodal graph} of a graph $G=(V_G,E_G)$, denoted by $A_{Ha}(G)$, is the graph whose vertex set coincides with $V_G$, where two vertices $u$ and $v$ are adjacent if $d_{Ha}(u,v) = diam_{Ha} (G)$.
\end{defi}

Notice that, since $d_{Ha}(u,u)\neq 0$,the Hamiltonian antipodal graph is not in general a simple graph, and loops may appear. Let us denote by $O_n$ the graph on $n$ vertices whose edge set consists exactly of $n$ loops, one at each vertex. In particular, this graph consists of $n$ connected component. Moreover, for a given simple graph $G=(V_G,E_G)$, we define the (non simple) graph $G^\circ=(V^\circ, E^\circ)$ by putting $V_{G^\circ}=V_G$ and $E_{G^\circ}=E_G\cup\{\{u,u\}:\, u\in V_G\}$, that is, $G^\circ$ is the graph $G$ with in addition a loop at each vertex. Then the following is a reformulation of Remark \ref{hamidis}.
\begin{prop}\label{contra}
Let $G=(V_G,E_G)$ be a graph, with $|V_G|=n$. Then $G$ is Hamilton-connected $\iff$ $A_{Ha}(G)=O_n$ and $diam_{Ha}(G)=n$.
\end{prop}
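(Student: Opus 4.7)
The plan is to deduce both implications of Proposition \ref{contra} directly from Remark \ref{hamidis} by unwinding the definitions of $A_{Ha}(G)$, $O_n$, and $diam_{Ha}(G)$, together with the elementary lower bound for $d_{Ha}$ already noted in \eqref{boundH}. The statement essentially re-encodes the equivalence \eqref{HC1} in graph-theoretic language, so no new combinatorial work is required.

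For the forward implication, I would assume $G$ is Hamilton-connected and invoke \eqref{HC1}, which pins down $d_{Ha}(u,v)$ exactly: it equals $n-1$ when $u\neq v$ and $n$ when $u=v$. Taking the maximum gives $diam_{Ha}(G)=n$, and this maximum is achieved only on the diagonal pairs $(u,u)$. By the definition of the Hamiltonian antipodal graph, the only edges of $A_{Ha}(G)$ are therefore the loops at each vertex, i.e.\ $A_{Ha}(G)=O_n$.

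For the converse, I would assume $A_{Ha}(G)=O_n$ and $diam_{Ha}(G)=n$. The diameter condition together with the fact that $O_n$ has a loop at every vertex forces $d_{Ha}(u,u)=n$ for every $u\in V_G$; by \eqref{HC}, $G$ is Hamiltonian (in particular connected with $n$ vertices). The remaining task is to show $d_{Ha}(u,v)=n-1$ for all $u\neq v$. The fact that $\{u,v\}$ is not an edge of $A_{Ha}(G)=O_n$ when $u\neq v$ gives $d_{Ha}(u,v)<n$. Combining this with the lower bound $\rho_{V_G}(u,v)\geq |V_G|-1=n-1$ from \eqref{boundH} (a path visiting all $n$ vertices must have length at least $n-1$), we obtain $d_{Ha}(u,v)=n-1$. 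Applying \eqref{HC1} then yields that $G$ is Hamilton-connected.

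There is essentially no obstacle: the proof is a direct translation. The only point worth stating carefully is that $A_{Ha}(G)=O_n$ is a statement both about \emph{which} pairs attain the maximum (only the diagonal ones) and about the presence of all loops, so each direction uses both pieces of information.
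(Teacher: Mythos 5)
Your proof is correct and follows essentially the same route as the paper, which states the proposition without further argument as ``a reformulation of Remark \ref{hamidis}''. You simply make explicit the unwinding of \eqref{HC} and \eqref{HC1} together with the lower bound $d_{Ha}(u,v)\geq n-1$ from \eqref{boundH}, which is exactly what the paper's one-line justification implicitly relies on.
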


\begin{example}\label{esempi}
Looking at Examples \ref{K_n}, \ref{P_n}, \ref{C_n}, we have $A_{Ha}(K_n)=O_n$ and $A_{Ha}(P_n)=O_n$: the graph $P_n$ is not Hamilton-connected for $n>2$, as $diam_{Ha}(P_n)=2n-2\neq n$ and therefore there is no contradiction with Proposition \ref{contra}. In the case of the cycle $C_n$, we have:
\begin{equation*}
A_{Ha}(C_n)=
\begin{cases}
A(C_n)\; &n\geq 6\\
A(C_n)^\circ \; &n=4,5.
\end{cases}
\end{equation*}
\begin{proof}
Since $C_3=K_3$, we can assume $n\geq 4$. Recall that we have (see Example \ref{C_n})
$$
diam_{Ha}(C_n)=n-2+diam(C_n)=n-2+ \left\lfloor  \frac{n}{2} \right\rfloor.
$$
Then in the case $n=4$ or $n=5$ we have  $diam_{Ha}(C_n)=n$. By Equation \eqref{distaci} we have $d_{Ha}(u,v)=n$ if and only if $d_{C_n}(u,v)=2=diam(C_n)$ or $u=v$.
In the case $n\geq 6$, we have $diam_{Ha}(C_n)>n$ and then $d_{Ha}(u,v)=diam_{Ha}(C_n)$ if and only if $d_{C_n}(u,v)=diam(C_n)$.
\end{proof}
\end{example}


In \cite{antipodal1} it is proved that the antipodal of the Cartesian product of connected graphs is the direct product of the antipodal graphs of the factors. The following theorem is the analogue for the wreath product: the novelty consists in the fact that, in the wreath product case, the Hamiltonian antipodal of the first factor graph must be considered.

\begin{theorem}\label{wr}
Let $G$ and $H$ be two connected graphs, with $|V_G|=n$. We have:
$$
A(G \wr H)= A(H)^{\times n}\times A_{Ha}(G).
$$
\end{theorem}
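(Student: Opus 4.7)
The plan is to unpack the definition of the antipodal graph using two tools already established: the diameter formula $diam(G\wr H)= n\,diam(H)+diam_{Ha}(G)$ from Corollary~\ref{diam}, and the distance formula $d_{G\wr H}(u,v)=\sum_{i=1}^n d_H(y_i,y_i')+\rho_{\delta(y,y')}(x,x')$ from Theorem~\ref{dista}. Two vertices $u=(y_1,\ldots,y_n)x$ and $v=(y_1',\ldots,y_n')x'$ are adjacent in $A(G\wr H)$ if and only if this sum equals $n\,diam(H)+diam_{Ha}(G)$, so the proof reduces to identifying when each piece attains its maximum.

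First I would record two elementary upper bounds: trivially $\sum_{i=1}^n d_H(y_i,y_i')\leq n\,diam(H)$, and by monotonicity of $\rho_A$ (applied to $\delta(y,y')\subseteq V_G$) one has $\rho_{\delta(y,y')}(x,x')\leq \rho_{V_G}(x,x')=d_{Ha}(x,x')\leq diam_{Ha}(G)$. Therefore the sum $d_{G\wr H}(u,v)$ reaches $diam(G\wr H)$ precisely when both bounds are saturated simultaneously.

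The first bound is saturated exactly when $d_H(y_i,y_i')=diam(H)$ for every $i\in\{1,\ldots,n\}$, which by definition of $A(H)$ is the condition that $y_i\sim y_i'$ in $A(H)$ for all $i$, i.e. $(y_1,\ldots,y_n)\sim(y_1',\ldots,y_n')$ in $A(H)^{\times n}$. The key observation that makes the Hamiltonian antipodal graph appear is the following: since we are assuming $|V_H|>1$ so that $diam(H)\geq 1$, saturation of the first bound forces $y_i\neq y_i'$ for every $i$, hence $\delta(y,y')=V_G$. Consequently $\rho_{\delta(y,y')}(x,x')=d_{Ha}(x,x')$, and saturation of the second bound reduces exactly to $d_{Ha}(x,x')=diam_{Ha}(G)$, i.e. $x\sim x'$ in $A_{Ha}(G)$.

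Combining these two equivalences gives
\[
(y,x)\sim(y',x')\text{ in }A(G\wr H)\iff y\sim y'\text{ in }A(H)^{\times n}\text{ and }x\sim x'\text{ in }A_{Ha}(G),
\]
which is exactly the adjacency rule of the direct product $A(H)^{\times n}\times A_{Ha}(G)$ from Definition~\ref{Cartesiananddirect}. Since both graphs share the vertex set $(V_H)^n\times V_G$, the identity follows. The only subtle point worth noting is that, although $A_{Ha}(G)$ may carry loops, the direct product $A(H)^{\times n}\times A_{Ha}(G)$ cannot acquire any (a loop at $(y,x)$ would require $y_i\sim y_i$ in $A(H)$ for every $i$, forcing $diam(H)=0$, contradicting $|V_H|>1$), which is consistent with $A(G\wr H)$ being loop-free. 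No genuine obstacle is expected: the content of the theorem is precisely that the hamiltonian-type detour encoded by $\rho_{V_G}=d_{Ha}$ arises automatically once the $H$-coordinates are forced to disagree everywhere.
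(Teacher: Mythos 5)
Your proof is correct and follows essentially the same route as the paper: combine Theorem \ref{dista} with Corollary \ref{diam}, observe that attaining the diameter forces each summand to attain its own maximum, and translate each saturation condition into adjacency in $A(H)$ and $A_{Ha}(G)$ respectively. In fact you make explicit a step the paper leaves implicit, namely that $d_H(y_i,y_i')=diam(H)\geq 1$ forces $\delta(y,y')=V_G$ and hence $\rho_{\delta(y,y')}=d_{Ha}$, which is exactly what justifies reading the second condition as adjacency in the Hamiltonian antipodal graph.
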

\begin{proof}
According to Theorem \ref{dista} and Corollary \ref{diam}, for any pair of vertices $u=(y_1,\ldots,y_n)x$ and $v=(y_1',\ldots, y_n')x'\in G\wr H$, we have that
$d_{G\wr H}(u,v)=diam(G\wr H)$ if and only if $d_H(y_i, y_i')= diam(H)$ for each $i=1,\ldots, n$, and $\rho_{\delta(y,y')}(x,x')=diam_{Ha}(G)$. This is possible if and only if, for each $i=1,\ldots, n$, we have $y_i\sim y_i' $ in $A(H)$
and $x\sim x'$ in $A_{Ha}(G)$, that is, if and only if $(y_1,\ldots,y_n)x \sim (y_1',\ldots, y_n')x'$ in $A(H)^{\times n}\times A_H(G)$.
\end{proof}

\begin{corollary}\label{Hami}
If $G$ is also Hamilton-connected, we have
$$
A(G\wr H)=A(H)^{\times n}\times O_n \qquad \textrm{ and } \qquad diam(G\wr H) = n(diam (H) +1).
$$
\end{corollary}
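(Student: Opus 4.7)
The plan is to deduce this corollary almost immediately by feeding the Hamilton-connected hypothesis into the two results established just above: Theorem \ref{wr}, which computes $A(G\wr H)$ in terms of $A(H)$ and the Hamiltonian antipodal graph $A_{Ha}(G)$, and Corollary \ref{diam}, which expresses $\mathrm{diam}(G\wr H)$ in terms of $\mathrm{diam}(H)$ and the Hamiltonian diameter $\mathrm{diam}_{Ha}(G)$. The bridge between ``Hamilton-connected'' and these two invariants is Proposition \ref{contra}, which says that Hamilton-connectedness of $G$ is equivalent to $A_{Ha}(G)=O_n$ together with $\mathrm{diam}_{Ha}(G)=n$.

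Concretely, I would first invoke Proposition \ref{contra} to replace $A_{Ha}(G)$ by $O_n$ in the statement of Theorem \ref{wr}, yielding
\[
A(G\wr H)=A(H)^{\times n}\times A_{Ha}(G)=A(H)^{\times n}\times O_n,
\]
which is the first claim. For the diameter, I would use the same proposition to get $\mathrm{diam}_{Ha}(G)=n$, and then substitute into the formula $\mathrm{diam}(G\wr H)=n\,\mathrm{diam}(H)+\mathrm{diam}_{Ha}(G)$ from Corollary \ref{diam}, obtaining
\[
\mathrm{diam}(G\wr H)=n\,\mathrm{diam}(H)+n=n(\mathrm{diam}(H)+1).
\]

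Since both ingredients (Theorem \ref{wr} and Corollary \ref{diam}) have already been proved, and Proposition \ref{contra} is the precise translation of Hamilton-connectedness needed here, there is no real obstacle; the ``proof'' is essentially a two-line substitution. The only thing to be a bit careful about is that the product $A(H)^{\times n}\times O_n$ is a product with a non-simple graph (since $O_n$ has loops), but this does not affect the argument: the adjacency criterion in the direct product is inherited directly from the characterization of pairs $(u,v)$ with $d_{G\wr H}(u,v)=\mathrm{diam}(G\wr H)$ given in the proof of Theorem \ref{wr}.
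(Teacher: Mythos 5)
Your proposal is correct and follows exactly the paper's own route: the authors also deduce the corollary directly from Theorem \ref{wr} and Proposition \ref{contra} (with Corollary \ref{diam} supplying the diameter formula). Nothing further is needed.
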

\begin{proof}
It directly follows from Theorem \ref{wr} and Proposition \ref{contra}.
\end{proof}


Finally, we present a characterization of connectedness for the antipodal graph of a wreath product.

\begin{prop}\label{proplabnew}
The graph $A(G \wr H)$ is connected if and only if $A(H)$ and $A_{Ha}(G)$ are connected and $A(H)$ is not bipartite.
\end{prop}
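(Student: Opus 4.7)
The plan is to use Theorem \ref{wr} to rewrite $A(G\wr H)=A(H)^{\times n}\times A_{Ha}(G)$, and then characterise when this direct product is connected via the classical Weichsel theorem: for connected (simple) graphs $X,Y$, the direct product $X\times Y$ is connected if and only if at least one of them is non-bipartite (and has exactly two connected components otherwise). Two ancillary observations are useful throughout: if either factor of a direct product is disconnected, the product is disconnected (projection onto each factor sends edges to edges, hence preserves connected components); and the direct product of two bipartite graphs is bipartite (compose the two $2$-colourings by addition mod $2$).

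For the direction ($\Rightarrow$), I would argue the contrapositive in three cases. If $A(H)$ is disconnected or $A_{Ha}(G)$ is disconnected, the full product is disconnected by the projection remark. If instead $A(H)$ is connected but bipartite, I would invoke the standing hypothesis $n=|V_G|\ge 2$ and prove that $A(H)^{\times n}$ is disconnected: for the base step $k=2$, Weichsel gives that $A(H)\times A(H)$ splits into two components because both factors are connected bipartite; bipartiteness is preserved under further direct products, so at each inductive step $A(H)^{\times k}=A(H)^{\times(k-1)}\times A(H)$ is again a product of two bipartite graphs, forcing disconnection. Hence $A(H)^{\times n}$, and therefore $A(G\wr H)$, is disconnected.

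For the direction ($\Leftarrow$), assume $A(H)$ is connected and non-bipartite and $A_{Ha}(G)$ is connected. I would first prove by induction on $k\ge 1$ that $A(H)^{\times k}$ is connected and non-bipartite. Non-bipartiteness is preserved because any odd closed walk $v_0\sim v_1\sim\cdots\sim v_{2\ell+1}=v_0$ in $A(H)$ lifts to its diagonal copy $(v_0,\ldots,v_0)\sim(v_1,\ldots,v_1)\sim\cdots\sim(v_0,\ldots,v_0)$, which is again an odd closed walk in $A(H)^{\times k}$. Connectedness in the inductive step follows from Weichsel applied to the connected non-bipartite graph $A(H)^{\times(k-1)}$ and the connected non-bipartite graph $A(H)$. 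Finally, applying Weichsel once more to $A(H)^{\times n}$ (connected, non-bipartite) and $A_{Ha}(G)$ (connected) yields connectedness of their direct product, regardless of whether $A_{Ha}(G)$ is bipartite or contains loops: a loop in $A_{Ha}(G)$ only makes it non-bipartite, which helps rather than hurts.

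The delicate point I expect to spend most care on is the necessity argument in the bipartite case: one must carefully track that bipartiteness is inherited by the iterated direct product so that each Weichsel application genuinely produces (at least) two components, and one must remember that loops in $A_{Ha}(G)$ are harmless throughout. Both points are routine once isolated, but it is easy to gloss over the fact that $n\ge 2$ is the exact hypothesis that makes the bipartite obstruction bite.
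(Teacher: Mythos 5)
Your proof is correct and follows essentially the same route as the paper: both apply Theorem \ref{wr} and then reduce to Weichsel's theorem on connectedness of direct products. The paper simply cites the multi-factor version (Corollary 5.10 of \cite{imrichbook}), that a direct product of connected graphs is connected if and only if at most one factor is bipartite, whereas you derive the needed statement by induction from the two-factor case; the care you take with loops in $A_{Ha}(G)$ and with the hypothesis $n\ge 2$ is left implicit in the paper's one-line argument.
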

\begin{proof}
It is known (see Corollary 5.10 in \cite{imrichbook}) that the direct product of connected graphs is connected if and only if at most one of the factors is bipartite.
\end{proof}

\begin{example}\label{antipk2k3}
In Fig. \ref{figureantipodal1} we have represented the graph $K_2\wr C_3$, where we have put $V_{K_2}=\{x_1,x_2\}$ and $V_{C_3}=\{0,1,2\}$. It is possible to directly check that $diam(K_2\wr C_3)=4$, as expected by Corollary \ref{diam},
since $diam_{Ha}(K_2)=2 $ and $diam(C_3)=1$.
\begin{figure}[h]
\begin{center}
\psfrag{00a}{\tiny{$(00)x_1$}}\psfrag{00b}{\tiny{$(00)x_2$}}\psfrag{01a}{\tiny{$(01)x_1$}}\psfrag{01b}{\tiny{$(01)x_2$}}\psfrag{02a}{\tiny{$(02)x_1$}}\psfrag{02b}{\tiny{$(02)x_2$}}
\psfrag{10a}{\tiny{$(10)x_1$}}\psfrag{10b}{\tiny{$(10)x_2$}}\psfrag{11a}{\tiny{$(11)x_1$}}\psfrag{11b}{\tiny{$(11)x_2$}}\psfrag{12a}{\tiny{$(12)x_1$}}\psfrag{12b}{\tiny{$(12)x_2$}}
\psfrag{20a}{\tiny{$(20)x_1$}}\psfrag{20b}{\tiny{$(20)x_2$}}\psfrag{21a}{\tiny{$(21)x_1$}}\psfrag{21b}{\tiny{$(21)x_2$}}\psfrag{22a}{\tiny{$(22)x_1$}}\psfrag{22b}{\tiny{$(22)x_2$}}
\includegraphics[width=0.5\textwidth]{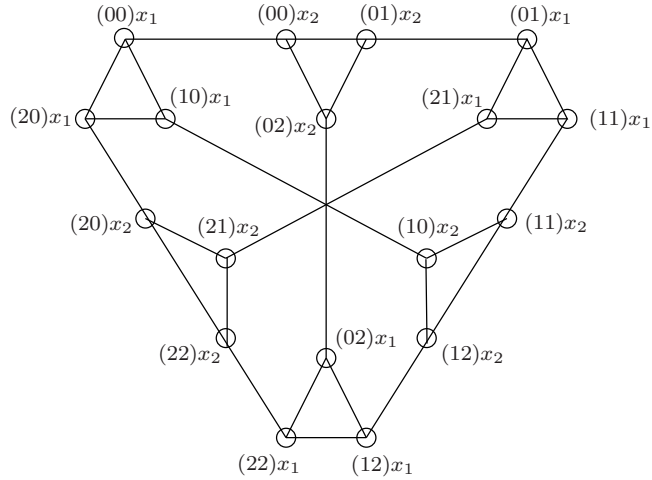}
\end{center}\caption{The graph $K_2\wr C_3$.} \label{figureantipodal1}
\end{figure}

Two vertices are at distance $4$ if and only if they share the position ($x_1$ or $x_2$, in $K_2$) but their configurations differ in both the coordinates ($0$, $1$ or $2$, in $C_3$).
By this explicit analysis, or equivalently by recalling Corollary \ref{Hami} and remembering that $A(C_3)=C_3$, we deduce $A(K_2\wr C_3) = C_3 \times C_3 \times O_2$. The graph $A(K_2\wr C_3)$ is depicted in Fig. \ref{figureantipodal2}. Observe that the direct product with $O_2$ is nothing but a disjoint duplication of the graph $C_3 \times C_3$, which coincides with the collinearity graph of the \emph{Generalized Quadrangle} $GQ(2,1)$.

\begin{figure}[h]
\begin{center}
\psfrag{00a}{\tiny{$(01)x_1$}}\psfrag{00b}{\tiny{$(01)x_2$}}\psfrag{01a}{\tiny{$(10)x_1$}}\psfrag{01b}{\tiny{$(10)x_2$}}\psfrag{02a}{\tiny{$(02)x_1$}}\psfrag{02b}{\tiny{$(02)x_2$}}
\psfrag{10a}{\tiny{$(11)x_1$}}\psfrag{10b}{\tiny{$(11)x_2$}}\psfrag{11a}{\tiny{$(20)x_1$}}\psfrag{11b}{\tiny{$(20)x_2$}}\psfrag{12a}{\tiny{$(12)x_1$}}\psfrag{12b}{\tiny{$(12)x_2$}}
\psfrag{20a}{\tiny{$(21)x_1$}}\psfrag{20b}{\tiny{$(21)x_2$}}\psfrag{21a}{\tiny{$(00)x_1$}}\psfrag{21b}{\tiny{$(00)x_2$}}\psfrag{22a}{\tiny{$(22)x_1$}}\psfrag{22b}{\tiny{$(22)x_2$}}
\includegraphics[width=0.7\textwidth]{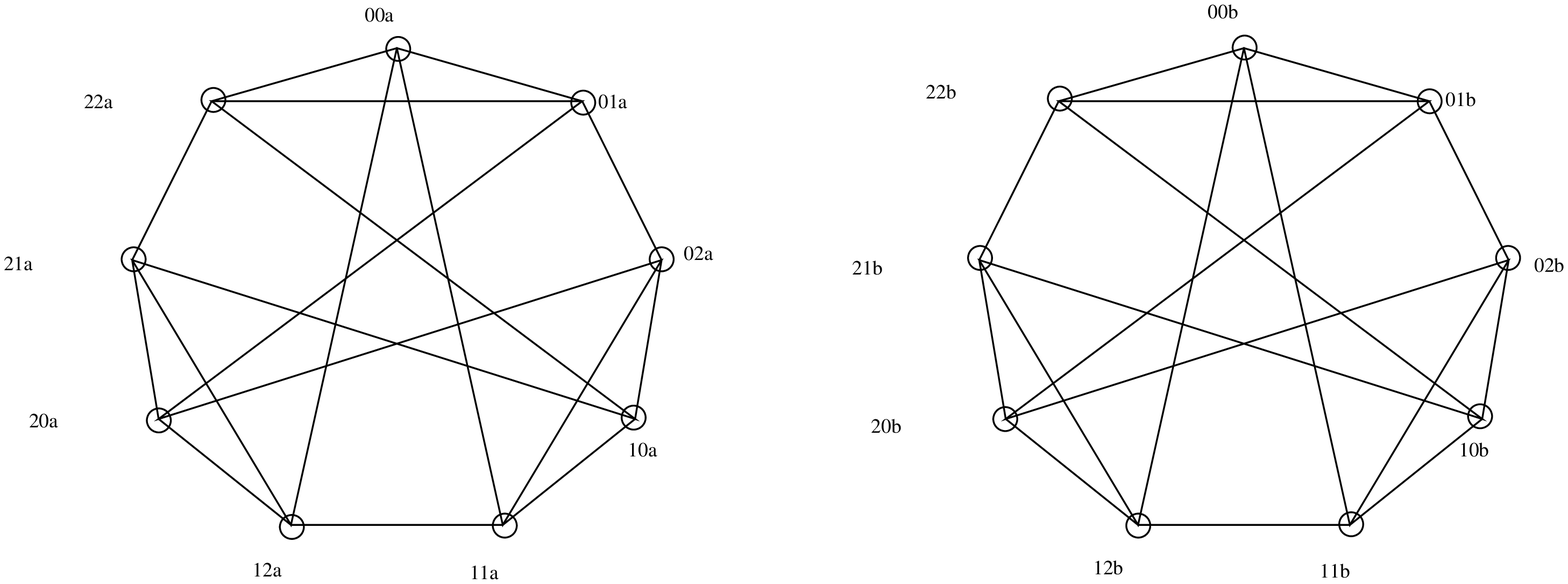}
\end{center}\caption{The graph $A(K_2\wr C_3)$.}  \label{figureantipodal2}
\end{figure}
\end{example}

\begin{example}
Recall from Example \ref{exclas} that $A(K_n)=K_n$, that for and odd $n$ we have $A(C_n)=C_n$, and that $A(P_m)=K_2 \sqcup  \{\mbox{$m-2$ isolated vertices}\}$.
For the properties of the direct product we have, for every graphs $G,G_1,H_1,G_2,H_2:$
$$
(G_1\sqcup H_1) \times (G_2\sqcup H_2)=(G_1\times G_2)  \sqcup (G_1\times H_2) \sqcup (H_1\times G_2) \sqcup (H_1\times H_2),
$$
$$
G\times O_m= \bigsqcup_{i=1}^{m} G=m G,
$$
$$
G\times \{\mbox{isolated vertex}\}=|V_G|\mbox{ isolated vertices}.
$$
In particular
\begin{equation*}
\begin{split}
A(P_m)^{\times 2}=(K_2 \sqcup  \{\mbox{$m-2$ isolated vertices}\})^{\times 2} &= (K_2\times K_2) \sqcup \{\mbox{$m^2-4$ isolated vertices}\}\\
&=K_2\sqcup K_2 \sqcup \{\mbox{$m^2-4$ isolated vertices}\}
\end{split}
\end{equation*}
and more generally
$$
A(P_m)^{\times n}=(K_2 \sqcup  \{\mbox{$m-2$ isolated vertices}\})^{\times n}= 2^{n-1} K_2 \sqcup  \{\mbox{$m^n-2^n$ isolated vertices}\}.
$$
Moreover, it is easy to prove that, for odd $n$, $K_2\times C_n=C_{2n}$. By combining Theorem \ref{wr}, Corollary \ref{Hami}, and Example \ref{esempi}, we are able to describe the antipodal graphs of several families of graphs:
\begin{itemize}
\item
$A(C_n\wr K_m)=K_m^{\times n} \times C_n$, if $n\geq7$ odd;
\item $A(C_n\wr C_m)=C_m^{\times n} \times C_n$, if $m,n$ are odd and $n\geq 7$;
\item  $A(C_n\wr P_m)=2^{n-1}C_{2n}\sqcup \{\mbox{$n(m^n-2^n)$ isolated vertices}\}$, if $n\geq7$ odd;
\item
$A(K_n\wr C_m)=A(P_n\wr C_m)=C_m^{\times n} \times O_n=n C_{m}^{\times n}$, if $m$ is odd;
\item
$A(K_n\wr K_m)=A(P_n\wr K_m)=K_m^{\times n}\times O_n= n K_{m}^{\times n}$;
\item
$A(K_n\wr P_m)=A(P_n\wr P_m)= n 2^{n-1} K_2 \sqcup \{\mbox{$n(m^n-2^n)$ isolated vertices}\}$.
\end{itemize}
\end{example}

\section{Wiener index of a wreath product}\label{sectionwiener}
The Wiener index is a distance-based topological index, introduced by H. Wiener in \cite{wiener} for graphs associated with molecules.
With a breadth-first search, it is possible to compute the Wiener index of an arbitrary graph with $n$ vertices and $m$ edges in time $O(nm)$. However, in graphs that are products of smaller graphs, especially for the wreath product, the number of vertices and edges could be very large: a better approach, that we will use in this section, consists in relating the Wiener index of a product to the Wiener index of the factor graphs.

\begin{defi}
Let $G=(V_G,E_G)$ be a connected graph. The \textit{Wiener index} of $G$ is  defined as the sum of the distances between all the unordered pairs of vertices, i.e.,
$$
W(G)= \frac{1}{2}\sum_{u,v\in V_G} d_{G}(u,v).
$$
\end{defi}
The Wiener index is an isomorphism invariant, that is, if $G_1$ and $G_2$ are isomorphic graphs, then $W(G_1)=W(G_2)$. Of course, it is not a \emph{complete invariant}, that is, the converse implication is false. The easiest counterexample is the pair of non-isomorphic graphs given by the cycle $C_4$ and the \emph{paw graph} $P$. The paw graph is a graph with vertex set $V_P=\{a, b, c, d\}$ and edge $E_P=\{\{a,b\},\{b,c\},\{a,c\},\{c,d\}\}$ (see Fig. \ref{cyclicandpaw}). It is easy to see that $W(C_4)=W(P)=8$.

\begin{figure}[h]
\begin{center}
\psfrag{1}{$1$}\psfrag{2}{$2$}\psfrag{3}{$3$}\psfrag{4}{$4$}
\psfrag{a}{$a$}\psfrag{b}{$b$}\psfrag{c}{$c$} \psfrag{d}{$d$}
\includegraphics[width=0.5\textwidth]{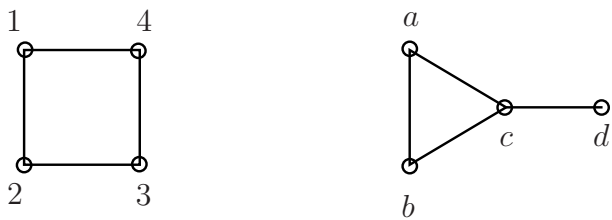}
\end{center}\caption{The graph $C_4$ and the Paw graph $P$.}\label{cyclicandpaw}
\end{figure}

However, it is true that, if the graph $G$ is such that $diam(G)=2$ and $|V_G|=n$, then we have $|E_G|$ pairs of vertices at distance $1$ and $\frac{n(n-1)}{2}-|E_G|$ pairs of vertices at distance $2$. Therefore $W(G)=n(n-1)-|E_G|$. This implies that two graphs of diameter $2$, with the same number of vertices and the same number of edges, have the same Wiener index.

In \cite{wprod} an explicit computation of the Wiener index for a Cartesian product is provided. For the sake of completeness, and in view of Theorem \ref{ww}, we study in Lemma \ref{carte} the particular case of the Wiener index of a Cartesian power.

\begin{lemma}\label{carte}
Let $H=(V_H,E_H)$ be a connected graph, with $|V_H|=m$. Then, for every $n\geq 1$, we have:
$$
W(H^{\Box n})=n m^{2(n-1)} W(H).
$$
\end{lemma}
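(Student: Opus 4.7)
The plan is to reduce the Wiener index of $H^{\Box n}$ to that of $H$ by exploiting the fact that geodesic distance in the Cartesian product is the sum of coordinatewise distances, as recalled just before Theorem \ref{dista}. Concretely, I will compute
\[
W(H^{\Box n}) = \frac{1}{2}\sum_{y,y'\in (V_H)^n} d_{H^{\Box n}}(y,y')
            = \frac{1}{2}\sum_{y,y'\in (V_H)^n}\sum_{i=1}^n d_H(y_i,y'_i),
\]
writing $y=(y_1,\ldots,y_n)$ and $y'=(y'_1,\ldots,y'_n)$.

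The main step is to swap the finite sums and then, for each fixed coordinate $i\in\{1,\ldots,n\}$, sum first over the $2(n-1)$ ``free'' coordinates $y_j,y'_j$ with $j\neq i$. Each of these coordinates ranges independently over $V_H$, contributing a factor $m^{2(n-1)}$. What remains is the sum $\sum_{y_i,y'_i\in V_H} d_H(y_i,y'_i)$ over ordered pairs, which equals $2W(H)$ by definition of the Wiener index (the diagonal terms vanish). Collecting the contributions from the $n$ coordinates gives
\[
W(H^{\Box n}) = \frac{1}{2}\cdot n\cdot m^{2(n-1)}\cdot 2W(H) = nm^{2(n-1)}W(H),
\]
which is the desired formula.

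There is no genuine obstacle here: the proof is entirely bookkeeping once one has the coordinatewise distance formula in $H^{\Box n}$. The only mild subtlety is the double-counting check, i.e., being careful that summing $d_H(y_i,y'_i)$ over ordered pairs in $V_H\times V_H$ produces exactly $2W(H)$ (since each unordered pair $\{u,v\}$ with $u\neq v$ is counted twice, and pairs with $u=v$ contribute $0$). An induction on $n$ is available as an alternative, using $H^{\Box n} = H^{\Box(n-1)}\Box H$ together with the product formula from \cite{wprod}, but the direct double-sum argument is shorter and self-contained.
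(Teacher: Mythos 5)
Your argument is correct and follows the same route as the paper: both expand $d_{H^{\Box n}}$ coordinatewise, fix a coordinate $i$, count the $m^{2(n-1)}$ choices of the remaining coordinates, and recognize the ordered-pair sum $\sum_{u,v\in V_H} d_H(u,v)$ as $2W(H)$. No differences worth noting.
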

\begin{proof}
Set $x=(x_1,\ldots, x_n)$, $x'=(x'_1,\ldots, x'_n)$, with $x_i, x'_i\in V_H$ for $i=1,\ldots, n$. Then
$$
W(H^{\Box n})=\frac{1}{2} \sum_{x,x'\in V_{H^{\Box n}}} d_{H^{\Box n}}(x,x')=\frac{1}{2} \sum_{x,x'\in V_{H^{\Box n}}} \sum_{i=1}^n d_{H}(x_i,x_i').
$$
For any pair $u,v\in V_H$, the contribution given by $d_H(u,v)$ appears exactly $m^{2(n-1)}$ times for each position $i$, and it gives:
$$
W(H^{\Box n})=\frac{1}{2} n m^{2(n-1)} \sum_{u,v\in V_H} d_H(u,v) = n m^{2(n-1)} W(H).
$$
\end{proof}

\begin{defi}\label{wa}
Let $G=(V_G,E_G)$ be a connected graph and let $A\subseteq V_G$, we put $W_{\rho_A}(G):=\frac{1}{2} \sum_{u,v\in V_G} \rho_A(u,v).$
\end{defi}


\begin{theorem}\label{ww}
Let $G=(V_G,E_G)$ and $H=(V_H,E_H)$ be two connected graphs with $|V_G|=n$, $|V_H|=m$. Then:
\begin{equation}\label{wiwr}
W(G\wr H)= n^3 m^{2(n-1)} W(H) + m^n \sum_{A\subseteq V_G} (m-1)^{|A|} W_{\rho_A}(G).
\end{equation}
\end{theorem}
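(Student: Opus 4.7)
The plan is to apply the distance formula from Theorem \ref{dista} directly to the definition of the Wiener index, and then separate the resulting double sum into two independent pieces, one coming from the Cartesian-type contribution $\sum_i d_H(y_i,y_i')$ and one coming from the $\rho_{\delta(y,y')}$ contribution.

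More precisely, I would begin by writing a generic vertex of $G \wr H$ as $(y,x)$ with $y=(y_1,\ldots,y_n)\in V_H^n$ and $x\in V_G$, so that
\[
2W(G\wr H)=\sum_{(y,x),(y',x')}\!\!\Bigl(\sum_{i=1}^n d_H(y_i,y_i')\Bigr) \;+\; \sum_{(y,x),(y',x')}\!\!\rho_{\delta(y,y')}(x,x').
\]
The first sum is independent of $x,x'$, so the sum over these two variables just produces a factor $n^2$, and what remains is $2W(H^{\Box n})$ by definition. Invoking Lemma \ref{carte}, this first piece contributes $n^2 \cdot n m^{2(n-1)} W(H) = n^3 m^{2(n-1)} W(H)$, which is exactly the first term on the right-hand side of \eqref{wiwr}.

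For the second sum, the key observation is that the summand depends on $y,y'$ only through the subset $\delta(y,y')\subseteq V_G$. So I would partition the pairs $(y,y')\in V_H^n\times V_H^n$ according to the value of $\delta(y,y')=A$. For each fixed $A\subseteq V_G$, a coordinate $i\in A$ contributes $m(m-1)$ admissible pairs $(y_i,y_i')$ (distinct entries in $V_H$), while a coordinate $i\notin A$ contributes $m$ admissible pairs (equal entries). This gives a total multiplicity $m^{n-|A|}(m(m-1))^{|A|}=m^n(m-1)^{|A|}$ of pairs $(y,y')$ with $\delta(y,y')=A$. Hence
\[
\sum_{y,y',x,x'}\rho_{\delta(y,y')}(x,x') = \sum_{A\subseteq V_G} m^n(m-1)^{|A|}\sum_{x,x'\in V_G}\rho_A(x,x') = 2m^n\sum_{A\subseteq V_G}(m-1)^{|A|}W_{\rho_A}(G),
\]
where I have used Definition \ref{wa} in the last step. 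Dividing everything by $2$ and adding the two contributions yields formula \eqref{wiwr}.

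The proof is essentially a bookkeeping argument, so I do not anticipate a genuine obstacle; the only delicate point is the combinatorial count of pairs $(y,y')$ with prescribed $\delta(y,y')=A$, and making sure that the case $A=\emptyset$ (where $(m-1)^0=1$ and $\rho_\emptyset=d_G$) is handled uniformly together with all non-empty $A$'s, so that no separate treatment is required.
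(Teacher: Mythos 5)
Your proposal is correct and follows essentially the same route as the paper's proof: apply Theorem \ref{dista}, split the double sum into the Cartesian contribution (handled via Lemma \ref{carte} with the factor $n^2$ from the sum over $x,x'$) and the $\rho_{\delta(y,y')}$ contribution, and count the pairs $(y,y')$ with $\delta(y,y')=A$ as $m^n(m-1)^{|A|}$. No gaps; the coordinatewise multiplicity count is exactly the one used in the paper.
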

\begin{proof}
Let $u=(y,x)\in V_{G\wr H} $ and $u'=(y',x')$ in $V_{G\wr H}$, with $y,y'\in V_{H^{\Box n}}$ and $x,x'\in V_{G}$. Then, by Theorem \ref{dista}, we have
$$
W(G\wr H)= \frac{1}{2}\sum_{u,u'\in V_{G\wr H}}{(d_{H^{\Box n}}(y,y')+\rho_{\delta(y,y')}(x,x'))}.
$$
Fixing a pair $y, y' \in H^{\Box n}$, the contribution $d_{H^{\Box n}}(y,y')$ appears $n^2$ times in the sum so that, using Lemma \ref{carte}, we get:
$$
\frac{1}{2}\sum_{u,u'\in V_{G\wr H}} d_{H^{\Box n}}(y,y')= \frac{1}{2}\sum_{y,y'\in V_{H^{\Box n}}} n^2 d_{H^{\Box n}}(y,y') = n^2 W(H^{\Box n})= n^3 m^{2(n-1)}W(H).
$$
On the other hand, for any fixed subset $A\subseteq V_G$, we have:
$$
|\{(y,y')\in (V_{H^{\Box n}})^2: \delta(y,y')=A\}|= m^n (m-1)^{|A|},
$$
as we can freely choose the $n$ coordinates of $y$ among the $m$ elements of $V_H$, and we can choose $|A|$ coordinates of $y'$ among the $(m-1)$ elements of $V_H$ (they have to be different from the ones chosen  for $y$). This implies that, for every $x,x'\in V_G$ and any fixed $A\subseteq V_G$, the contribution given by $\rho_A(x,x')$ appears $m^n (m-1)^{|A|}$ times in the sum, so that:
$$
\frac{1}{2}\sum_{u,u'\in V_{G\wr H}} \rho_{\delta(y,y')}(x,x')=  \frac{1}{2} \sum_{A\subseteq V_G} \sum_{x,x'\in V_G} m^n (m-1)^{|A|} \rho_A(x,x').
$$
Then the claim is proven according to Definition \ref{wa}.
\end{proof}

Notice that the coefficient of the term $W_{\rho_A}(G)$ in  Equation \eqref{wiwr} of Theorem \ref{ww} only depends on the cardinality of $|A|$. This motivates the following definition.
\begin{defi}\label{wv}
Let $G=(V_G,E_G)$ be a connected graph with $|V_G|=n$.
For any $k\in \{0,1,\ldots,n\}$, set
$$
W_{\rho_k}(G):=\sum_{A\subseteq V_G, \, |A|=k} W_{\rho_A}(G).
$$
Moreover, we call \emph{Wiener vector} of $G$ the $(n+1)$-vector:
$$
W_{\rho}(G):=(W_{\rho_0}(G), W_{\rho_1}(G),\ldots, W_{\rho_n}(G)).
$$
\end{defi}
\noindent The following identities hold:
\begin{equation*}\begin{split}
W_{\rho_0}(G)&= W_{\rho_\emptyset}(G)=W(G);\qquad \qquad  W_{\rho_n}(G)= W_{\rho_{V_G}}(G);\\
W_{\rho_1}(G)&=\sum_{u^* \in V_G}
W_{\rho_{\{u^*\}}}(G)=\sum_{u^*\in V_G}
\frac{1}{2} \sum_{u,v\in V_G} \rho_{\{u^*\}}(u,v)\\&=
\frac{1}{2}\sum_{u^*\in V_G}
\sum_{u,v\in V_G}(d_G(u, u^*)+
d_G (u^*, v))=2 n W(G). \end{split}
\end{equation*}

\begin{corollary}\label{wwk}
Let $G=(V_G,E_G)$ and $H=(V_H,E_H)$ be connected graphs with $|V_G|=n$, $|V_H|=m$. Then we have
\begin{eqnarray}\label{formjan}
W(G\wr H)= n^3 m^{2(n-1)} W(H) + m^n \sum_{k=0}^{n} (m-1)^{k} W_{\rho_k}(G).
\end{eqnarray}
\end{corollary}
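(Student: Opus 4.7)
The plan is to derive the corollary directly from Theorem \ref{ww} by regrouping the sum over subsets $A \subseteq V_G$ according to the cardinality $|A|$. The key observation, already noted in the paragraph preceding Definition \ref{wv}, is that the coefficient $(m-1)^{|A|}$ appearing in Equation \eqref{wiwr} depends on $A$ only through its size, which is exactly the feature that makes the definition of $W_{\rho_k}(G)$ natural.

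Concretely, I would start from the formula of Theorem \ref{ww} and split the sum $\sum_{A \subseteq V_G}$ as $\sum_{k=0}^{n} \sum_{A \subseteq V_G,\,|A|=k}$. Since $(m-1)^{|A|} = (m-1)^k$ is constant on the inner sum, it factors out, yielding
\[
\sum_{A\subseteq V_G} (m-1)^{|A|} W_{\rho_A}(G) = \sum_{k=0}^{n} (m-1)^{k} \sum_{A\subseteq V_G,\,|A|=k} W_{\rho_A}(G) = \sum_{k=0}^{n} (m-1)^{k} W_{\rho_k}(G),
\]
where the last equality is Definition \ref{wv}. Substituting back into \eqref{wiwr} gives exactly \eqref{formjan}.

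There is no real obstacle here: the statement is a cosmetic reorganization of Theorem \ref{ww} that records the dependence of the Wiener index of $G\wr H$ on the Wiener vector of $G$ rather than on the full family $\{W_{\rho_A}(G)\}_{A\subseteq V_G}$. The only thing worth double-checking is that the range of $k$ is $\{0,1,\ldots,n\}$ (matching the length of the Wiener vector) and that the empty-set term $k=0$ correctly reproduces the standard Wiener index contribution $W_{\rho_0}(G)=W(G)$, which was already verified in the displayed identities right after Definition \ref{wv}.
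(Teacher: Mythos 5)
Your proposal is correct and follows exactly the paper's own argument: the paper proves the corollary by invoking Theorem \ref{ww} together with Definition \ref{wv}, which is precisely the regrouping of the sum over subsets $A\subseteq V_G$ by cardinality that you carry out explicitly. Your version just writes out the intermediate step of factoring $(m-1)^{|A|}=(m-1)^k$ out of the inner sum, which the paper leaves implicit.
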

\begin{proof}
It follows from Theorem \ref{ww} and from the definition of $W_{\rho_k}(G)$.
\end{proof}
It follows that the vector $W_{\rho}(G)$ is an invariant for the graph that completely determines the Wiener index of the wreath products where $G$ is the first factor. In other words, for any pair of connected graphs $G_1$ and $G_2$ such that $W_{\rho}(G_1)=W_{\rho}(G_2)$, we have $W(G_1\wr H)=W(G_2\wr H)$ for any connected graph $H$.
Actually, also the converse is true, as the following proposition shows.
\begin{prop}
For any given pair of connected graphs $G_1$ and $G_2$, we have:
$$
W_{\rho}(G_1)=W_{\rho}(G_2) \iff  W(G_1\wr H)=W(G_2\wr H) \mbox{ for any connected graph $H$}.
$$
\end{prop}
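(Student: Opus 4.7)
The forward implication $(\Rightarrow)$ is essentially the content of the discussion preceding the statement: if $W_\rho(G_1)=W_\rho(G_2)$ then in particular the two vectors have the same length, so $|V_{G_1}|=|V_{G_2}|=:n$, and substituting into the formula of Corollary \ref{wwk} yields $W(G_1\wr H)=W(G_2\wr H)$ for every connected graph $H$ (since formula \eqref{formjan} depends on $G_i$ only through $n$ and the components of $W_\rho(G_i)$).

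For the converse $(\Leftarrow)$, suppose $W(G_1\wr H)=W(G_2\wr H)$ for every connected $H$, and write $n_i:=|V_{G_i}|$. I first claim that $n_1=n_2$. Specialising to $H=K_m$, Corollary \ref{wwk} expresses $W(G_i\wr K_m)$ as a polynomial in $m$. The summand $n_i^3 m^{2(n_i-1)}W(K_m)=\tfrac{1}{2}n_i^3 m^{2(n_i-1)}m(m-1)$ has degree exactly $2n_i$ with positive leading coefficient $n_i^3/2$, while the remaining contribution $m^{n_i}\sum_{k=0}^{n_i}(m-1)^k W_{\rho_k}(G_i)$ has degree at most $2n_i$ with nonnegative leading coefficient. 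Thus $\deg_m W(G_i\wr K_m)=2n_i$, and the hypothesis forces $n_1=n_2=:n$.

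Once the cardinalities match, the common term $n^3 m^{2(n-1)}W(H)$ in \eqref{formjan} cancels on the two sides, so for every connected graph $H$ with $m=|V_H|$ we obtain
\[
m^n\sum_{k=0}^{n}(m-1)^{k}\bigl(W_{\rho_k}(G_1)-W_{\rho_k}(G_2)\bigr)=0.
\]
Taking $H=K_m$ for every $m\ge 2$ shows that this polynomial identity in $m$ has infinitely many roots, hence vanishes identically. Dividing by $m^n$ and regarding the result as a polynomial of degree at most $n$ in the variable $t:=m-1$, all of its coefficients must be zero, which gives $W_{\rho_k}(G_1)=W_{\rho_k}(G_2)$ for every $k\in\{0,1,\ldots,n\}$, i.e.\ $W_\rho(G_1)=W_\rho(G_2)$. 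The only mildly delicate point is ensuring that enough graphs $H$ are available to turn the resulting equality into a polynomial identity in $m$; this is handled uniformly by the family $\{K_m\}_{m\ge 2}$, which both establishes the matching cardinalities and then extracts each component of the Wiener vector.
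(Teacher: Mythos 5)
Your proof is correct and follows essentially the same route as the paper: specialise to $H=K_m$, view Corollary \ref{wwk} as a polynomial identity in $m$, and extract the coefficients $W_{\rho_k}$. You are in fact somewhat more careful than the paper's own (very terse) argument, since you first justify via a degree count that $|V_{G_1}|=|V_{G_2}|$ before comparing the two polynomials.
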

\begin{proof}
It is enough to regard the term $m^n \sum_{k=0}^{n} (m-1)^{k} W_{\rho_k}(G)$ in \eqref{formjan} as a polynomial in the variable $m$ and to use the classical fact that if the evaluations of two polynomials coincide in a number of points larger than the degree, then the two polynomials are the same. Since $W(G_1\wr H)=W(G_2\wr H)$ for any connected graph $H$, in particular we have $W(G_1\wr K_i)=W(G_2\wr K_i)$ for $i=1,2,\ldots, n+1$, and this implies $W_{\rho}(G_1)=W_{\rho}(G_2)$.
\end{proof}

It is an easy but tedious exercise to prove that the graph $C_4$ and the paw graph $P$ have different Wiener vectors:
$$
W_{\rho}(C_4)=(8, 64,132,104, 28) \qquad W_{\rho}(P)=(8, 64,134, 110, 32).
$$
It is natural to ask about the nature of this invariant.

\begin{question}
Are there  pairs of non-isomorphic connected graphs $G_1$, $G_2$, with $W_{\rho}(G_1)=W_{\rho}(G_2)$? Equivalently, are there pairs of non-isomorphic connected graphs $G_1$, $G_2$, such that $W(G_1\wr H)=W(G_2\wr H)$  for any connected graph $H$?
\end{question}

In the remaining part of the section, we analyze the special cases of the wreath products $K_n\wr H$ and $P_n\wr H$.

\subsection{Wiener index of $K_n \wr H$}\label{sub1}
Let us consider the case $G=K_n$. The following lemma holds.

\begin{lemma}\label{vectc}
The components of the Wiener vector of the complete graph $K_n$ are:
$$
W_{\rho_0}(K_n)= \frac{n(n-1)}{2} \qquad \qquad W_{\rho_1}(K_n)= n^2(n-1)
$$
$$
W_{\rho_k}(K_n)=  \frac{1}{2}{n\choose k} (kn^2-2kn+k+n^2),\quad \textrm{ with } 2\leq k\leq n.
$$
\end{lemma}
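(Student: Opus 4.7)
The strategy is to use the explicit values of $\rho_A$ for the complete graph (Equation \eqref{rocompleto} from Example \ref{K_n}), together with the vertex-transitivity of $K_n$, to reduce the computation of $W_{\rho_k}(K_n)$ to summing over ordered pairs of vertices of $K_n$ in four geometric cases.

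First I would dispose of the two easiest components. For $k=0$ we use $\rho_\emptyset=d_{K_n}$, so $W_{\rho_0}(K_n)=W(K_n)=\binom{n}{2}=\tfrac{n(n-1)}{2}$. For $k=1$, by the last line of Proposition \ref{permu} we have $\rho_{\{a\}}(u,v)=d_{K_n}(u,a)+d_{K_n}(a,v)$; since every pair of distinct vertices in $K_n$ is at distance one, a direct count over ordered pairs $(u,v)$ gives $\tfrac{1}{2}\sum_{u,v}\rho_{\{a\}}(u,v)=n(n-1)$ for each fixed $a\in V_{K_n}$, and summing over the $n$ choices of $a$ yields $W_{\rho_1}(K_n)=n^{2}(n-1)$.

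For the general case $k\geq 2$ I would first observe that, since $Aut(K_n)$ acts transitively on the $k$-subsets of $V_{K_n}$, Proposition \ref{inva} implies that $W_{\rho_A}(K_n)$ depends only on $|A|$. Therefore
\[
W_{\rho_k}(K_n)=\binom{n}{k}\,W_{\rho_A}(K_n)\qquad\text{for any fixed }A\text{ of size }k.
\]
Fixing one such $A$, I would split the ordered pairs $(u,v)\in V_{K_n}\times V_{K_n}$ into the four regions appearing in Equation \eqref{rocompleto} (both vertices outside $A$; exactly one inside $A$; both inside $A$ and distinct; $u=v\in A$ with $|A|>1$), count their cardinalities as $(n-k)^{2}$, $2k(n-k)$, $k(k-1)$, $k$ respectively, and weight each by the corresponding value of $\rho_A$ from \eqref{rocompleto}. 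This produces
\[
2\,W_{\rho_A}(K_n)=(n-k)^{2}(k+1)+2k^{2}(n-k)+k(k-1)^{2}+k^{2}.
\]

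The main (purely algebraic) step is then to simplify this expression. Expanding each term and collecting powers of $k$, the cubic terms cancel ($k^{3}-2k^{3}+k^{3}=0$), the quadratic terms in $k$ cancel ($-2nk^{2}+2nk^{2}=0$ and $k^{2}+k^{2}-2k^{2}=0$), and what remains is $kn^{2}+n^{2}-2kn+k$. Multiplying by $\binom{n}{k}/2$ gives exactly the claimed formula $W_{\rho_k}(K_n)=\tfrac{1}{2}\binom{n}{k}(kn^{2}-2kn+k+n^{2})$. I expect no real obstacle beyond the careful bookkeeping in this last simplification; the only subtle point is remembering to include the diagonal contribution $u=v\in A$ with value $k$ (which appears only when $|A|>1$), explaining why the general formula disagrees with the $k=1$ formula and must be stated separately.
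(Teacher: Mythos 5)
Your proposal is correct and follows essentially the same route as the paper: reduce to a single $k$-subset $A$ via the transitivity of $Aut(K_n)$ on $k$-subsets, partition the ordered pairs $(u,v)$ into the four cases of Equation \eqref{rocompleto} with cardinalities $(n-k)^2$, $2k(n-k)$, $k(k-1)$, $k$, and sum the weighted contributions. The only (harmless) difference is that you verify the $k=0$ and $k=1$ cases directly, whereas the paper obtains them from the general identities $W_{\rho_0}(G)=W(G)$ and $W_{\rho_1}(G)=2nW(G)$ stated after Definition \ref{wv}.
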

\begin{proof}
Notice that, since in the complete graph $K_n$ all subsets of the same cardinality are isomorphic, the quantity $W_{\rho_A}(K_n)$ only depends on the cardinality of the subset $A$. Let $A\subseteq V_{K_n}$, with $|A|=k\geq 2$. Then it is easy to see that
\begin{equation*}\begin{split}
&|\{(u,v)\in (V_{K_n})^2: (u\in A, v\notin A)\mbox{ or } (v\in A, u\notin A)\}|=2k(n-k),\\
&|\{(u,v)\in (V_{K_n})^2: u\notin A, v\notin A\}|=(n-k)^2,\\
&|\{(u,v)\in (V_{K_n})^2: u\in A, v\in A, u\neq v\}|=k(k-1),\\
&|\{(u,v)\in (V_{K_n})^2: u\in A, v\in A, u=v\}|=k.\\
\end{split}
\end{equation*}
Therefore, by using Equation \eqref{rocompleto} from Example \ref{K_n} and Definition \ref{wa}, we obtain:
$$
W_{\rho_A}(K_n)=\frac{1}{2}\left(kn^2-2kn+k+n^2\right).
$$
The claim follows.
\end{proof}

We are now in position to prove the following theorem.
\begin{theorem}\label{wcom}
Let $H$ be a connected graph with $|V_{H}|=m$. Then:
$$
W(K_n\wr H)=n^3 m^{2n-2}W(H)+\frac{nm^{n}}{2}(n^2m^n-n^2m^{n-1}-m^{n}n+2m^{n-1}n-m+m^{n}-m^{n-1}).
$$
\end{theorem}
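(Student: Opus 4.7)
The strategy is to specialize Corollary \ref{wwk} to the case $G = K_n$, substitute the explicit values of the components of the Wiener vector $W_\rho(K_n)$ obtained in Lemma \ref{vectc}, and then evaluate the resulting sum in closed form using two standard binomial identities.

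More precisely, Corollary \ref{wwk} gives
\begin{equation*}
W(K_n \wr H) = n^3 m^{2(n-1)} W(H) + m^n \sum_{k=0}^{n} (m-1)^k W_{\rho_k}(K_n),
\end{equation*}
so it suffices to evaluate the sum $S := \sum_{k=0}^{n} (m-1)^k W_{\rho_k}(K_n)$. The key observation is that the formula $W_{\rho_k}(K_n) = \tfrac{1}{2}\binom{n}{k}\bigl(k(n-1)^2 + n^2\bigr)$ from Lemma \ref{vectc} (valid for $k \geq 2$) differs from the correct values for $k=0$ and $k=1$ by a very simple correction: a direct check shows that, putting $\widetilde{W}_k := \tfrac{1}{2}\binom{n}{k}\bigl(k(n-1)^2 + n^2\bigr)$, we have $W_{\rho_k}(K_n) = \widetilde{W}_k - \tfrac{n}{2}$ for $k \in \{0,1\}$ and $W_{\rho_k}(K_n) = \widetilde{W}_k$ for $k \geq 2$.

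Once this is in place, $S$ splits as $S = \sum_{k=0}^{n}(m-1)^k \widetilde{W}_k \;-\; \tfrac{n}{2}\bigl(1 + (m-1)\bigr) = \sum_{k=0}^{n}(m-1)^k \widetilde{W}_k - \tfrac{nm}{2}$. The remaining sum splits further, as $\widetilde{W}_k$ is a linear combination of $\binom{n}{k}$ and $k\binom{n}{k}$, and can be evaluated using the two elementary identities
\begin{equation*}
\sum_{k=0}^{n} \binom{n}{k} x^k = (1+x)^n, \qquad \sum_{k=0}^{n} k\binom{n}{k} x^k = nx(1+x)^{n-1},
\end{equation*}
with $x = m-1$, which yields $m^n$ and $n(m-1)m^{n-1}$ respectively. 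This gives
\begin{equation*}
\sum_{k=0}^{n}(m-1)^k \widetilde{W}_k = \frac{n}{2}\bigl[(n-1)^2(m-1)m^{n-1} + n m^n\bigr],
\end{equation*}
and therefore $m^n S = \tfrac{nm^n}{2}\bigl[(n-1)^2(m-1)m^{n-1} + nm^n - m\bigr]$.

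The final step is the routine algebraic expansion of $(n-1)^2(m-1)m^{n-1}$ and collection of like terms, which produces the polynomial $n^2 m^n - n^2 m^{n-1} - m^n n + 2 m^{n-1} n - m + m^n - m^{n-1}$ claimed in the statement. I do not expect any genuine obstacle here: the combinatorial content lies entirely in Corollary \ref{wwk} and Lemma \ref{vectc}, and the only subtlety is the bookkeeping of the small discrepancy between $W_{\rho_k}(K_n)$ and $\widetilde{W}_k$ for $k \leq 1$, which contributes the isolated $-\tfrac{nm}{2}$ correction responsible for the $-m$ term in the final expression.
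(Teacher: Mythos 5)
Your proposal is correct and follows essentially the same route as the paper: specialize Corollary \ref{wwk} to $G=K_n$, plug in Lemma \ref{vectc}, and evaluate the sum via the binomial identities $\sum_k \binom{n}{k}(m-1)^k = m^n$ and $\sum_k k\binom{n}{k}(m-1)^k = n(m-1)m^{n-1}$. The only (cosmetic) difference is that the paper keeps the $k=0,1$ terms separate and truncates the binomial sums at $k=2$, whereas you absorb them into the generic formula via the uniform $-\tfrac{n}{2}$ correction, which is a slightly cleaner piece of bookkeeping yielding the same result.
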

\begin{proof}
By virtue of Corollary \ref{wwk} and Lemma \ref{vectc}, we have:
\begin{equation}\label{conti}\begin{split}
W(K_n\wr H)=& n^3 m^{2n-2}W(H)+\frac{n(n-1)}{2} m^n +n^2(n-1) m^n (m-1) \\+&m^n \sum_{k=2}^n {(m-1)^k \frac{1}{2}{n\choose k} (kn^2-2kn+k+n^2)}.
\end{split}
\end{equation}
By the Binomial Theorem $\sum_{k=0}^n {(m-1)^k {n\choose k}}=m^n$, we deduce:
$$
\sum_{k=0}^n {(m-1)^k {n\choose k}k}= n(m-1)  \sum_{k=1}^n {(m-1)^{k-1} {{n-1}\choose{k-1}}}= n (m-1)  m^{n-1},
$$
so that we can rewrite the last term in Equation \eqref{conti} as follows:
\begin{equation*}
\begin{split}
&m^n \sum_{k=2}^n {(m-1)^k \frac{1}{2}{n\choose k} (kn^2-2kn+k+n^2)}=\\
&\frac{1}{2}m^n(n^2-2n+1) \sum_{k=2}^n {(m-1)^k {n\choose k}k}
+ \frac{1}{2}m^n n^2 \sum_{k=2}^n {(m-1)^k {n\choose k}}=\\
&\frac{1}{2}m^n(n^2-2n+1)[n(m-1)m^{n-1}-n(m-1)]+ \frac{1}{2}m^n n^2 [m^n-n(m-1)-1].
\end{split}
\end{equation*}
Finally, by summing all the contributions, we get the claim.
\end{proof}
Observe that Theorem \ref{wcom} generalizes the result of \cite{compcomp} (case $H=K_m$) and of \cite{BCD} (case $H=C_m$).

\subsection{Wiener index of $P_n \wr H$}\label{sub2}
We know that $W_{\rho_A}(P_n)=W_{\rho_{\{\min A,\max A\}
}}(P_n)$ from Example \ref{P_n}. With every fixed set $A$, we associate the numbers
$$
a:=\min A-1, \qquad b:=\max A-\min A+1, \qquad c=n-\max A,
$$
and the following partition $\mathcal P_{a|b|c}$ of the vertex set (note that $a+b+c=n$):
$$
V_{P_n}=\{1,2,\ldots, n\}=\{1,2,\ldots, \min A-1\} \sqcup \{\min A, \ldots, \max A\} \sqcup \{\max A+1, \ldots, n\},
$$
whose parts have cardinality $a$, $b$, $c$, respectively. This induces a partition of $(V_{P_n})^2$ into $9$ parts. In the next proposition, we will give an explicit expression of $W_{\rho_{A}}(P_n)$, as the sum of the contributions of each of these parts.

\begin{prop}
Let $P_n$ be the path graph, with $n>2$. Let $A\subseteq V_{P_n}$ and $a:=\min A-1$, $b:=\max A-\min A+1$. Then we have:
\begin{equation}\label{mapath}
W_{\rho_A}(P_n)=\frac{1}{2}\left(n^3-n^2+\frac{1}{3}b(2b-1)(b-1)-2a(n-b-a)(n+b-1)\right).
\end{equation}
\end{prop}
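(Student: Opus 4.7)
The strategy is to reduce to a two-element visiting set, split the double sum into the nine cells induced by $\mathcal P_{a|b|c}$, apply the corresponding line of \eqref{patpat} on each cell, and sum.

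\textbf{Reduction and setup.} Set $\alpha:=\min A$ and $\beta:=\max A$, so $a=\alpha-1$, $b=\beta-\alpha+1$, $c=n-\beta$ and $a+b+c=n$. By the observation in Example \ref{P_n}, a path visits $A$ if and only if it visits $\{\alpha,\beta\}$, hence $\rho_A=\rho_{\{\alpha,\beta\}}$ and $W_{\rho_A}(P_n)=W_{\rho_{\{\alpha,\beta\}}}(P_n)$. Write
$$
L:=\{1,\dots,\alpha-1\},\quad M:=\{\alpha,\dots,\beta\},\quad R:=\{\beta+1,\dots,n\},
$$
so that $|L|=a,\ |M|=b,\ |R|=c$, and the induced partition of $V_{P_n}^2$ has nine cells.

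\textbf{Evaluating $\rho_A$ on each cell.} Since $\rho_A$ is symmetric, using \eqref{patpat} one obtains a single closed form on each cell, depending only on the cell:
\begin{itemize}
\item On $L\times L$, $L\times M$, $M\times L$:\ $\rho_A(u,v)=2\beta-u-v$.
\item On $L\times R$ and $R\times L$:\ $\rho_A(u,v)=|u-v|$.
\item On $M\times M$:\ $\rho_A(u,v)=2(b-1)-|u-v|$.
\item On $M\times R$, $R\times M$, $R\times R$:\ $\rho_A(u,v)=u+v-2\alpha$.
\end{itemize}
All nine contributions then reduce to elementary sums of $1$, $u$, $u+v$, $|u-v|$ over arithmetic progressions, together with the single nontrivial ingredient
$$
\frac{1}{2}\sum_{u,v\in M}|u-v|=W(P_b)=\frac{b(b-1)(b+1)}{6},
$$
which is a standard computation for the Wiener index of a path.

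\textbf{Assembling the pieces.} Substituting the closed forms above and the partial sums
$$
\sum_{u\in L} u=\tfrac{a(a+1)}{2},\ \ \sum_{u\in M} u=\tfrac{b(2a+b+1)}{2},\ \ \sum_{u\in R} u=\tfrac{c(2a+2b+c+1)}{2},
$$
into
$$
2W_{\rho_A}(P_n)=\sum_{(u,v)\in V_{P_n}^2}\rho_A(u,v),
$$
one obtains a polynomial expression in $a,b,c$; eliminating $c=n-a-b$ (which is forced by the shape of the answer) yields a polynomial in $a,b,n$ alone, which upon collection matches
$$
n^3-n^2+\tfrac{1}{3}b(2b-1)(b-1)-2a(n-b-a)(n+b-1).
$$
This is formula \eqref{mapath}.

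\textbf{Main difficulty.} The only real work is the algebraic simplification at the end: nine terms must collapse into the compact right-hand side. A useful sanity check along the way is the case $a=c=0$, $b=n$, where $A=V_{P_n}$ and $\rho_A=d_{Ha}$; from Example \ref{P_n} one has $d_{Ha}(u,v)=2(n-1)-d_{P_n}(u,v)$, giving $W_{\rho_A}(P_n)=n^2(n-1)-W(P_n)=\frac{n(n-1)(5n-1)}{6}$, which is exactly what \eqref{mapath} returns for these parameters.
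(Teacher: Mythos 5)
Your proposal is correct and follows essentially the same route as the paper: reduce to $\rho_A=\rho_{\{\min A,\max A\}}$, split $(V_{P_n})^2$ into the nine blocks induced by the partition $\mathcal P_{a|b|c}$, evaluate the closed form from \eqref{patpat} on each block, and sum. The paper merely packages the per-block sums via the auxiliary matrices $\mathcal O(x,n)$ and $\mathcal V(x,n,m)$ and their entry-sums, which is the same bookkeeping you carry out with arithmetic-progression sums and $W(P_b)$; your sanity check at $a=c=0$, $b=n$ agrees with both.
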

\begin{proof}
For every $x,n\in \mathbb N$, let us denote with $\mathcal O(x,n)$ the $n\times n$ matrix such that $\mathcal O(x,n)$ is symmetric, $\mathcal O(x,n)_{1,1}=x$, and
\begin{equation*}
\mathcal O(x,n)_{i+1,j}=\begin{cases}
\mathcal O(x,n)_{i,j}+1 \mbox{ if } i+j\leq n\\
\mathcal O(x,n)_{i,j}-1 \mbox{ otherwise. }
\end{cases}
\end{equation*}
More explicitly, we have:
$$
\mathcal O(x,n)=\left(
                  \begin{array}{ccccc}
                    x & x+1 & \cdots & x+n-2 & x+n-1 \\
                     x+1 & x+2 & \cdots & \cdots & x+n-2 \\
                    \vdots & \vdots & \ddots &  & \vdots \\
                    x+n-2 & x+n-1 &  & \ddots & x+1 \\
                    x+n-1 & x+n-2 & \cdots & x+1 & x \\
                  \end{array}
                \right)
$$
Let us denote with $|\mathcal O(x,n)|$ the sum of all entries of $\mathcal O(x,n)$. It is easy to check that
\begin{equation}\label{O}|\mathcal O(x,n)|=(x-1)n^2+\frac{1}{3}(2n^3+n).
\end{equation}
In particular $|\mathcal O(1,n)|=\frac{n(2n^2+1)}{3}$ is the sequence of the so called \emph{ Octahedral numbers} (sequence A005900 in the OEIS).\\
\indent For every $x,n,m\in \mathbb N$, let us denote with $\mathcal V(x,n,m)$ the $n\times m$ matrix such that $\mathcal V(x,n,m)_{1,1}=x$ and $\mathcal V(x,n,m)_{i+1,j}=\mathcal V(x,n,m)_{i,j}+1$, for every $i=1,\ldots, n-1$ and $j=1,\ldots,m$, and $\mathcal V(x,n,m)_{i,j+1}=V(x,n,m)_{i,j}+1$, for every $i=1,\ldots, n$ and $j=1,\ldots,m-1$. More explicitly, we have:
$$
\mathcal V(x,n,m)=\left(
                \begin{array}{cccc}
                  x & x+1 & \cdots & x+m-1 \\
                  x+1 & x+2 & \cdots & x+m\\
                  \vdots & \vdots & \ddots & \vdots \\
                  x+n-1 & x+n& \cdots & x+n+m-2 \\
                \end{array}
              \right).
$$
Let us denote with $|\mathcal V(x,n,m)|$ the sum of all entries of $\mathcal V(x,n,m)$. One can check that
\begin{equation}\label{V}
|\mathcal V(x,n,m)|=xnm+\frac{nm}{2}(n+m-2).
\end{equation}
Finally, let us denote by $p_n$ the $n\times n$ permutation matrix with $1$ on the antidiagonal entries, that is,
$$
(p_n)_{i,j} = \left\{
                \begin{array}{ll}
                  1 & \hbox{if } i+j=n+1 \\
                  0 & \hbox{otherwise.}
                \end{array}
              \right.
$$
Given an $n\times m$ matrix $B$, the matrices $p_n B$, $B p_m$, $p_n B p_m$ are nothing but all possible rotations of the matrix $B$. In particular, we have \begin{equation}\label{rot}
|B|=|p_n B|=|B p_m|=|p_n B p_m|.
\end{equation}
Now let $\rho_A$ be the $n\times n$ symmetric matrix such that $({\rho_A})_{i,j}=\rho_A(i,j)$. Then $2W_{\rho_A}(P_n)=|\rho_A|$. The matrix $\rho_A$ consists of $9$ blocks determined by the partition $\mathcal P_{a|b|c}$ associated with $A$: looking at Equation \eqref{patpat} in Example \ref{P_n}, it can be seen that these blocks are exactly, up to rotations, the matrices introduced above:
$$
\rho_A=\left(
  \begin{array}{c|c|c}
 p_a\mathcal V(2b,a,a) p_a & p_a \mathcal V(b,a,b) p_b & p_a \mathcal V(b+1,a,c)\\
    \hline
      p_b \mathcal V(b,b,a) p_a &p_b \mathcal  O(b-1,b)& \mathcal V(b,b,c)\\
    \hline
       \mathcal V(b+1,c,a)p_a &  \mathcal V(b,c,b) &  \mathcal V(2b,c,c)\\
  \end{array}
\right).
$$
Combining this description of the matrix $\rho_A$ with Equation \eqref{rot}, we obtain:
\begin{equation*}
2 W_{\rho_A}(P_n)=|\mathcal V(2b,a,a)|+2|\mathcal V(b,a,b)|+2|\mathcal V(b+1,a,c)|+2|\mathcal V(b,b,c)|+|\mathcal V(2b,c,c)|+ |\mathcal O(b-1,b)|
\end{equation*}
By a direct computation, which makes use of Equations \eqref{O} and \eqref{V}, we get the claim.
\end{proof}
We are now in position to compute the Wiener vector of the graph $P_n$.

\begin{theorem}\label{tp}
Let $P_n$ be the path graph, with $n>2$. Then, for any $k\in\{0,1,\ldots, n\}$, we have:
$$W_{\rho_k}(P_n)={{n+1}\choose{k+1}}\frac{5k^3n^2+k^3n+18k^2n^2-18k^2n-12k^2+19kn^2-25kn+12k+6n^2-6n}{6(k+2)(k+3)}.$$
\end{theorem}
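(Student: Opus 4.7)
The plan is to reduce $W_{\rho_k}(P_n) = \sum_{|A|=k} W_{\rho_A}(P_n)$ to a weighted double sum in which the terms depend only on two integer parameters. The key reduction, already established in Example \ref{P_n}, is that $W_{\rho_A}(P_n) = W_{\rho_{\{\min A, \max A\}}}(P_n)$, so the summand depends on $A$ only through $p := \min A$ and $q := \max A$. Partitioning the $k$-subsets of $V_{P_n}$ by the pair $(p,q)$: for $k \ge 2$ and fixed $p<q$, the number of $k$-subsets with these extremes is $\binom{q-p-1}{k-2}$, since one freely picks the remaining $k-2$ elements among the $q-p-1$ integers strictly between $p$ and $q$. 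Changing variables to $a = p-1$ and $b = q-p+1$, so that $c := n-a-b \ge 0$, and substituting the explicit formula \eqref{mapath}, one obtains
\begin{equation*}
W_{\rho_k}(P_n) = \frac{1}{2}\sum_{b=k}^{n}\binom{b-2}{k-2}\sum_{a=0}^{n-b}\left(n^3-n^2+\tfrac{1}{3}b(2b-1)(b-1)-2a(n-b-a)(n+b-1)\right).
\end{equation*}
The cases $k=0$ and $k=1$ can be recovered from the formal convention $\binom{b-2}{-2}=\delta_{b,0}$, $\binom{b-2}{-1}=\delta_{b,1}$, or treated separately and then checked against the stated formula at the end.

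Next I would evaluate the inner sum over $a$. Only two elementary identities are needed: $\sum_{a=0}^{m}1 = m+1$ and $\sum_{a=0}^{m} a(m-a) = \binom{m+1}{3}$, applied with $m = n-b$. This collapses the problem to a single sum
\begin{equation*}
W_{\rho_k}(P_n) = \sum_{b=k}^{n}\binom{b-2}{k-2}\,Q(b,n),
\end{equation*}
where $Q(b,n)$ is an explicit polynomial of degree three in $b$ with coefficients polynomial in $n$.

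For the outer sum I would invoke the hockey-stick identity in the form $\sum_{b=k}^{n}\binom{b-2}{k-2}\binom{n-b}{s} = \binom{n-1}{k-1+s}$ (a consequence of Vandermonde's convolution), after expanding $Q(b,n)$ in the basis $\bigl\{\binom{n-b}{s}\bigr\}_{s\ge 0}$. Equivalently, one can rewrite each occurrence of $b^j$ via $b = (b-2)+2$ and apply Pascal's rule together with $\binom{b-2}{k-2}(b-2-j) = (k-1-j)\binom{b-2}{k-2} + (k-1)\binom{b-2}{k-2}$-type relations to shift indices. Either way the sum collapses into a handful of binomial coefficients of the form $\binom{n}{k+j}$ sharing a common factor $\binom{n+1}{k+1}$.

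The main obstacle, as often with such identities, is purely algebraic bookkeeping: the target expression has a degree-three numerator in $k$ with coefficients of degree two in $n$, normalized by $6(k+2)(k+3)$, and one must group the intermediate binomials carefully so the prefactor $\binom{n+1}{k+1} = \frac{(n+1)!}{(k+1)!(n-k)!}$ emerges naturally, which requires the denominator $(k+2)(k+3)$ to appear from a ratio such as $\binom{n}{k+3}/\binom{n+1}{k+1}$. As a sanity check for the final formula I would verify the closed form at the boundary values $k=0$ (giving $W(P_n) = \binom{n+1}{3}$), $k=1$ (giving $2nW(P_n)$, consistent with the identity recorded just after Definition \ref{wv}), and $k=n$ (giving the single term $W_{\rho_{V_{P_n}}}(P_n) = \tfrac{n(n-1)(5n-1)}{6}$ obtained directly from \eqref{mapath} at $a=0,b=n$).
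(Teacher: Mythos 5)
Your proposal follows essentially the same route as the paper's proof: the reduction $W_{\rho_A}(P_n)=W_{\rho_{\{\min A,\max A\}}}(P_n)$, the count $\binom{b-2}{k-2}$ of $k$-subsets with prescribed extremes, and the resulting double sum over $a$ and $b$ weighted by formula \eqref{mapath} are exactly the paper's argument, which then concludes ``by an explicit computation.'' You actually supply more detail than the paper on how that computation collapses (the inner sum via $\sum_{a=0}^{m}a(m-a)=\binom{m+1}{3}$, the outer sum via Vandermonde), and your boundary checks at $k=0,1,n$ are all consistent with the stated closed form.
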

\begin{proof}
Once fixed three non-negative integers $a$, $b$, $c$ such that $a+b+c=n$, there are exactly ${{b-2}\choose{k-2}}$ subsets $A\subseteq \{1,2,\ldots,n\}$ with $|A|=k$ and such that  $a=\min A-1$, $b=\max A-\min A+1$, $c=n-\max A$, provided that $b\geq k$. Combining with Equation \eqref{mapath}, we have
$$
W_{\rho_k}(P_n)=\sum_{a=0}^{n-k}\sum_{b=k}^{n-a} {{b-2}\choose{k-2}}\frac{1}{2}\left(n^3-n^2+\frac{1}{3}b(2b-1)(b-1)-2a(n-b-a)(n+b-1)\right).
$$
Finally, by an explicit computation, we get the claim.
\end{proof}
Notice that Theorem \ref{tp} presents an explicit computation of the vector $W_{\rho}(P_n).$ By applying the formula given in Corollary \ref{wwk}, it is possible to compute $W(P_n\wr H)$, for every connected graph $H$ whose Wiener index $W(H)$ is known.

\section{Szeged index of a wreath product}\label{sectionszeged}

We start this section by recalling the definition of Szeged index for a connected graph $G=(V_G, E_G)$, as presented in \cite{szeged1}. Given an edge $e=\{u,v\}\in E_G$ (in other words, $u$ and $v$ are adjacent vertices in $G$), one defines:
$$
B_u (e) = \{w \in V_G : d_G(w,u) < d_G(w,v)\} \qquad  B_v (e) = \{w \in V_G : d_G(w,v) < d_G(w,u)\}.
$$
In particular, if $d_G(w,u)=d_G(w,v)$, then $w$ is neither in $B_u(e)$ nor in $B_v(e)$. Finally, one defines $n_u(e) = |B_u(e)|$ and $n_v(e) = |B_v(e)|$. The Szeged index $Sz(G)$ of $G$ is defined as:
\begin{eqnarray*}
Sz(G) = \sum_{e=\{u,v\}\in E_G} n_u(e)n_v(e).
\end{eqnarray*}
It is known that, if $G$ is a tree, then the Szeged index $Sz(G)$ coincides with the Wiener index $W(G)$. In \cite{szeged1}, a more general sufficient condition for a graph $G$ to satisfy the equality $Sz(G) = W(G)$ is given. In \cite{szeged2}, some connections between the Wiener index and the Szeged index are investigated, and the Szeged index of some graph compositions are described. We also want to mention that the Szeged index and a weighted generalization of it are studied for graph compositions in \cite{szeged3}. The aim of this section is to study the Szeged index for a wreath product of two graphs.

\begin{example}\label{szK_n}
Let $K_n$ be the complete graph on $n$ vertices. For any $e=\{u,v\}\in E_G$, the set $B_u (e)$ contains only the vertex $u$, so that:
$$
Sz(K_n) = \sum_{e=\{u,v\}\in E_{K_n}} n_u(e)n_v(e)=\frac{1}{2} n (n-1).
$$
\end{example}

\begin{example}\label{szC_m}
Let $C_m$ be the cycle graph on $m$ vertices. For any $e=\{u,v\}\in E_G$, the set $B_u (e)$ contains  $\lfloor \frac{m}{2}\rfloor$  vertices. Then we have:
$$
Sz(C_m) = \sum_{e=\{u,v\}\in E_{C_m}} n_u(e)n_v(e)=m \left\lfloor \frac{m}{2}\right \rfloor^2.
$$
\end{example}

Let $G=(V_G,E_G)$ and $H=(V_H,E_H)$ be two connected graphs. In order to compute $Sz(G\wr H)$ we decompose $E_{G\wr H}$ into the subset of edges of type I, denoted by $E_I$, and the subset of edges of type II, denoted by $E_{II}$. Thus, we put
$$
Sz_I(G\wr H):=\sum_{e=\{u,v\}\in E_I} n_u(e)n_v(e) \qquad \qquad Sz_{II}(G\wr H):=\sum_{e=\{u,v\}\in E_{II}} n_u(e)n_v(e),
$$
so that
\begin{equation}\label{split}
Sz(G\wr H)=Sz_I(G\wr H)+Sz_{II}(G\wr H).
\end{equation}
Let us start by considering edges of type I.
\begin{lemma}\label{n1}
Consider two vertices $u,v\in V_{G\wr H}$, with $u=(w_1,\ldots, w_{k-1}, a, w_{k+1}, \ldots, w_n)x_k$ and  ${v=(w_1,\ldots, w_{k-1}, b, w_{k+1}, \ldots, w_n)x_k}$, with $a\sim b$ in $H$. Let $z=(y_1,\ldots, y_n)x_i$ be an arbitrary vertex of $V_{G\wr H}$. Then we have
$$
d_{G\wr H}(z, u)> d_{G\wr H}(z,v)\iff d_H(y_k,a)>d_H(y_k,b).
$$
\end{lemma}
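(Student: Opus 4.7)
The plan is to apply the general distance formula of Theorem \ref{dista} to both $d_{G\wr H}(z,u)$ and $d_{G\wr H}(z,v)$ and then exploit the fact that the two vertices $u$ and $v$ differ only in one coordinate of the color tuple, and share the same base position $x_k$. Writing $u=(u_1,\ldots,u_n)x_k$ and $v=(v_1,\ldots,v_n)x_k$ with $u_j=v_j=w_j$ for $j\neq k$, $u_k=a$ and $v_k=b$, Theorem \ref{dista} yields
\begin{equation*}
d_{G\wr H}(z,u)-d_{G\wr H}(z,v) = \bigl(d_H(y_k,a)-d_H(y_k,b)\bigr) + \bigl(\rho_{\delta(y,u)}(x_i,x_k)-\rho_{\delta(y,v)}(x_i,x_k)\bigr),
\end{equation*}
since all summands $d_H(y_j,w_j)$ with $j\neq k$ cancel.

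The crucial observation, and the only real content of the argument, is that the second parenthesis vanishes. Indeed, by definition of $\delta$, the sets $\delta(y,u)$ and $\delta(y,v)$ agree at every coordinate $j\neq k$; they can differ only at the single vertex $x_k$, depending on whether $y_k$ equals $a$, $b$, both (impossible, since $a\sim b$ in $H$) or neither. So it suffices to show that for any subset $A\subseteq V_G$ one has $\rho_A(x_i,x_k)=\rho_{A\cup\{x_k\}}(x_i,x_k)$. This is immediate: any path realizing $\rho_A(x_i,x_k)$ ends at $x_k$, hence automatically visits $x_k$, so it is also a path visiting $A\cup\{x_k\}$; the inequality in the other direction is simply the monotonicity of $\rho$ recorded after Lemma \ref{monotonia}.

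With this identity in hand, $\rho_{\delta(y,u)}(x_i,x_k)=\rho_{\delta(y,v)}(x_i,x_k)$ regardless of which of the cases above we are in, and therefore
\begin{equation*}
d_{G\wr H}(z,u)-d_{G\wr H}(z,v) = d_H(y_k,a)-d_H(y_k,b),
\end{equation*}
from which the stated equivalence follows at once. The only step that requires any thought is the invariance of $\rho_A(x_i,x_k)$ under adjoining the endpoint $x_k$ to $A$; everything else is bookkeeping with Theorem \ref{dista}.
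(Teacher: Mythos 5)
Your proof is correct, and it actually proves something slightly stronger than what the paper establishes. Both arguments start identically, by applying Theorem \ref{dista} to $d_{G\wr H}(z,u)$ and $d_{G\wr H}(z,v)$ and noting that the sets $\delta(y,u)$ and $\delta(y,v)$ can differ only in whether they contain $x_k$. From there the paper proceeds by a three-way case analysis on $y_k$: when $y_k\neq a,b$ the two sets coincide and the claim is immediate, while in the two asymmetric cases the paper only needs (and only proves) a one-sided inequality, obtained from the monotonicity of $\rho$ together with the sign of $d_H(y_k,a)-d_H(y_k,b)$. You instead isolate the identity $\rho_A(x_i,x_k)=\rho_{A\cup\{x_k\}}(x_i,x_k)$ --- valid because any path ending at $x_k$ automatically visits $x_k$, with the reverse inequality given by monotonicity --- and deduce that the $\rho$-terms are exactly equal in every case, so that $d_{G\wr H}(z,u)-d_{G\wr H}(z,v)=d_H(y_k,a)-d_H(y_k,b)$ identically. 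This exact difference formula is not stated in the paper and immediately yields the equivalence (and would also give the analogous statements with $>$ replaced by $=$ or $<$); the paper's case analysis is marginally shorter but yields only the stated biconditional. Both routes are sound; yours trades a small extra lemma for a cleaner and more informative conclusion.
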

\begin{proof}
By virtue of Theorem \ref{dista}, we have:
$$
d_{G\wr H}(z, u)=\sum_{j=1, \, j\neq k}^n d_H(y_j,w_j)+d_H(y_k,a)+ \rho_{A}(x_i, x_k),
$$
$$
d_{G\wr H}(z, v)=\sum_{j=1, \, j\neq k}^n d_H(y_j,w_j)+d_H(y_k,b)+ \rho_{B}(x_i, x_k)
$$
where
$$
A=\delta((y_1,\ldots, y_n), (w_1,\ldots, w_{k-1}, a, w_{k+1}, \ldots, w_n))
$$
and similarly
$$
B=\delta((y_1,\ldots, y_n), (w_1,\ldots, w_{k-1}, b, w_{k+1}, \ldots, w_n)).
$$
If $y_k\neq a$ and $y_k\neq b$, then $A=B$ and the claim is true.\\
If $y_k=a$ and $y_k\neq b$, then $0=d_H(y_k,a)<d_H(y_k,b)$ and $A\subset B$, thus $\rho_A\leq \rho_B$ by monotonicity and then $d_{G\wr H}(z, u)< d_{G\wr H}(z, v)$: the claim is true. If $y_k\neq a$ and $y_k= b$, $d_H(y_k,a)>d_H(y_k,b)=0$ and $B\subset A$, thus $\rho_B\leq \rho_A$ by monotonicity and then $d_{G\wr H}(z, u)> d_{G\wr H}(z, v)$: the claim is true.
\end{proof}

\begin{lemma}\label{n1e}
Consider two vertices $u,v\in V_{G\wr H}$, with $u=(w_1,\ldots, w_{k-1}, a, w_{k+1}, \ldots, w_n)x_k$ and  ${v=(w_1,\ldots, w_{k-1}, b, w_{k+1}, \ldots, w_n)x_k}$, with $a\sim b$ in $H$. Let us put $E=\{u,v\}\in E_{G\wr H}$ and $e=\{a,b\}\in E_H$. Then we have:
$$
n_u(E)=n m^{n-1}  n_a(e).
$$
\end{lemma}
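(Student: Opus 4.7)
The plan is to reduce the count $n_u(E)$ to a count inside the factor graph $H$ via Lemma \ref{n1}, exploiting the fact that the closeness condition only depends on a single coordinate.

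First, I write a generic vertex of $G\wr H$ as $z=(y_1,\ldots,y_n)x_i$, and I recall that $z\in B_u(E)$ means $d_{G\wr H}(z,u)<d_{G\wr H}(z,v)$. By Lemma \ref{n1} applied to the type I edge $E=\{u,v\}$ (which only flips the $k$-th coordinate between $a$ and $b$), this is equivalent to the condition
\[
d_H(y_k,a)<d_H(y_k,b),
\]
i.e.\ to $y_k\in B_a(e)$ in the graph $H$. Notice that this condition does not involve $x_i$ nor any of the coordinates $y_j$ with $j\neq k$.

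Once this decoupling is established, the counting is immediate. The base position $x_i$ can be chosen freely among the $n$ vertices of $V_G$; each coordinate $y_j$ with $j\neq k$ can be chosen freely among the $m$ vertices of $V_H$, contributing a factor $m^{n-1}$; finally, $y_k$ must lie in $B_a(e)$, contributing the factor $n_a(e)=|B_a(e)|$. Multiplying these independent choices, I get
\[
n_u(E)=|B_u(E)|=n\cdot m^{n-1}\cdot n_a(e),
\]
as claimed.

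There is essentially no obstacle here, since the heavy lifting is done by Lemma \ref{n1}: the only delicate point is to observe that the equivalence in that lemma really lets one forget about $x_i$ and the other coordinates, so that the global count factorises as a product of a count in $H$ with the free multiplicities $n$ and $m^{n-1}$.
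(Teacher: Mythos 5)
Your proof is correct and follows essentially the same route as the paper: both invoke Lemma \ref{n1} (with the roles of $a$ and $b$ swapped to get the ``$<$'' form) to reduce membership in $B_u(E)$ to the condition $y_k\in B_a(e)$, and then count the $n\,m^{n-1}$ vertices of $G\wr H$ sharing a fixed $k$-th coordinate.
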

\begin{proof}
By Lemma \ref{n1} a vertex $z=(y_1,\ldots, y_n)x_i\in V_{G\wr H}$ is in $B_u(E)$ if and only if $y_k$ is in $B_a(e)$. For a fixed $y\in V_H$, there are exactly $n m^{n-1}$ vertices $z=(y_1,\ldots, y_n)x_i\in V_{G\wr H}$ such that $y_k=y$, that is, $|B_u(E)|= n m^{n-1}  |B_a(e)|$. The claim follows.
\end{proof}

\begin{prop}\label{n1p}
Let $G=(V_G,E_G)$ and $H=(V_H,E_H)$ be connected graphs with $|V_G|=n$, and $|V_H|=m$. Then
$$ Sz_{I}(G\wr H)=n^3 m^{3n-3} Sz(H).$$
\end{prop}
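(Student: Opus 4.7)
The plan is to evaluate the sum $Sz_I(G\wr H) = \sum_{E = \{u,v\} \in E_I} n_u(E) n_v(E)$ by reindexing over the edges of $H$. The entire analytic work has already been carried out in Lemma \ref{n1e}, so what remains is essentially a bookkeeping argument.

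First, I would apply Lemma \ref{n1e} symmetrically in both endpoints. If $E=\{u,v\}$ is a type I edge with $u=(w_1,\ldots,w_{k-1},a,w_{k+1},\ldots,w_n)x_k$ and $v=(w_1,\ldots,w_{k-1},b,w_{k+1},\ldots,w_n)x_k$, and $e=\{a,b\}\in E_H$, then Lemma \ref{n1e} gives $n_u(E) = nm^{n-1}n_a(e)$ and, by the same reasoning with the roles of $a$ and $b$ exchanged, $n_v(E) = nm^{n-1}n_b(e)$. Hence
\[
n_u(E)n_v(E) = n^2 m^{2n-2}\, n_a(e)n_b(e),
\]
a quantity which depends only on the underlying edge $e$ of $H$, not on the position $k$ or the frozen coordinates $(w_j)_{j\neq k}$.

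Next, I would count the type I edges lying above a fixed $e=\{a,b\}\in E_H$. By Definition \ref{defierschler}, a type I edge is uniquely determined by the choice of the active coordinate $k\in\{1,\ldots,n\}$, the unordered edge $\{a,b\}\in E_H$ sitting in that coordinate, and the tuple of frozen values $(w_j)_{j\neq k}\in (V_H)^{n-1}$. Thus, for each $e\in E_H$ there are exactly $n\cdot m^{n-1}$ type I edges above it.

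Combining these two observations,
\[
Sz_I(G\wr H) = \sum_{e=\{a,b\}\in E_H} \bigl(n m^{n-1}\bigr)\cdot n^2 m^{2n-2}\,n_a(e)n_b(e) = n^3 m^{3n-3}\sum_{e=\{a,b\}\in E_H} n_a(e)n_b(e) = n^3 m^{3n-3}\, Sz(H),
\]
which is the claimed identity. There is no real obstacle here beyond keeping track of the factor $nm^{n-1}$ appearing twice (once from Lemma \ref{n1e} applied to each endpoint, and once from the counting of the fibres of the projection $E_I\to E_H$); the genuine difficulty was already absorbed in the proofs of Lemmas \ref{n1} and \ref{n1e}.
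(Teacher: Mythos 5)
Your proof is correct and follows essentially the same route as the paper: both arguments reduce everything to Lemma \ref{n1e} applied to the two endpoints and then count the type~I edges lying over a fixed edge of $H$. Your bookkeeping (indexing the fibre of $E_I \to E_H$ directly by the active coordinate $k$ and the frozen tuple, giving $nm^{n-1}$ edges per $e \in E_H$) is, if anything, slightly more transparent than the paper's sum over vertex--neighbour pairs, but the content is identical.
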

\begin{proof}
By virtue of Lemma \ref{n1e}, setting $z=(y_1,\ldots,y_n)x_i$, we have:
$$
Sz_{I}(G\wr H)=\sum_{E=\{u,v\}\in E_I} n_u(E)n_v(E)=\sum_{z \in V_{G\wr H},\, b\in V_H: e=\{y_i,b\}\in E_H} n_{y_i}(e) n_b(e) n^2 m^{2n-2}.
$$
For a fixed edge $e=\{a,b\} \in E_H$, there are exactly $n m^{n-1}$ vertices $z=(y_1,\ldots, y_n)x_i\in V_{G\wr H}$ such that $y_i=a$, then
$$
Sz_{I}(G\wr H)= n m^{n-1}  \sum_{e=\{a,b\}\in E_H}n_a(e) n_b(e) n^2 m^{2n-2}=n^3 m^{3n-3} Sz(H).
$$
\end{proof}
We pass now to the investigation of edges of type II. Also in this case, the maps $\rho_A$, with $A\subseteq V_G$, will play a crucial role.
\begin{defi}
Let $G=(V_G,E_G)$ be a connected graph, and let $A\subseteq V_G$ and $e=\{x_j,x_k\}\in E_G$. We put
$$
B_{x_j}(e,\rho_A):=\{x_i\in V_G: \rho_A(x_i,x_j)<\rho_A(x_i,x_k)\} \qquad \textrm{and } \ n_{x_j}(e,\rho_A)=|B_{x_j}(e,\rho_A)|;
$$
$$
B_{x_k}(e,\rho_A):=\{x_i\in V_G: \rho_A(x_i,x_k)<\rho_A(x_i,x_j)\} \qquad \textrm{and } \ n_{x_k}(e,\rho_A)=|B_{x_k}(e,\rho_A)|.
$$
\end{defi}

\begin{lemma}\label{n2}
Consider two vertices $u,v\in V_{G\wr H}$, $u=(w,x_j)$ and  $v=(w, x_k)$, with $w\in (V_H)^n$  and $x_j\sim x_k$ in $G$. Let $z=(y, x_i)$ be an arbitrary vertex of $V_{G\wr H}$. Then we have
 $$d_{G\wr H}(z, u)> d_{G\wr H}(z,v)\iff \rho_{\delta(w,y)}(x_i,x_j)>\rho_{\delta(w,y)}(x_i,x_k).$$
\end{lemma}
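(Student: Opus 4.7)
The plan is to apply Theorem \ref{dista} directly to both $d_{G\wr H}(z,u)$ and $d_{G\wr H}(z,v)$, and then to observe that the ``$H$-part'' of the distance formula cancels out.

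Concretely, writing $z=(y,x_i)$ with $y=(y_1,\dots,y_n)\in(V_H)^n$, $u=(w,x_j)$, and $v=(w,x_k)$ with $w=(w_1,\dots,w_n)\in(V_H)^n$, Theorem \ref{dista} gives
\begin{equation*}
d_{G\wr H}(z,u)=\sum_{\ell=1}^n d_H(y_\ell,w_\ell)+\rho_{\delta(y,w)}(x_i,x_j),
\end{equation*}
\begin{equation*}
d_{G\wr H}(z,v)=\sum_{\ell=1}^n d_H(y_\ell,w_\ell)+\rho_{\delta(y,w)}(x_i,x_k).
\end{equation*}
The crucial point is that the first coordinate of $u$ and of $v$ is exactly the same tuple $w$, so both the Cartesian-product distance $\sum_\ell d_H(y_\ell,w_\ell)$ and the set $\delta(y,w)=\delta(w,y)$ are identical in the two expressions. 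Subtracting, the two summations cancel and we are left with
\begin{equation*}
d_{G\wr H}(z,u)-d_{G\wr H}(z,v)=\rho_{\delta(w,y)}(x_i,x_j)-\rho_{\delta(w,y)}(x_i,x_k),
\end{equation*}
from which the equivalence stated in the lemma follows immediately.

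There is essentially no obstacle: the lemma is a direct consequence of Theorem \ref{dista}, and the whole content is the observation that, along a type-II edge $\{u,v\}$, only the position coordinate in $V_G$ changes, so the $H^{\Box n}$-contribution to the distance from any third vertex $z$ is common to both $d_{G\wr H}(z,u)$ and $d_{G\wr H}(z,v)$, and the subset $\delta(w,y)$ of $V_G$ that the lamplighter must visit is the same on the two sides. This is the type-II analogue of Lemma \ref{n1}, where the type-I case required the extra monotonicity argument since moving within $H$ could change $\delta$.
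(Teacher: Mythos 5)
Your proof is correct and follows exactly the same route as the paper: apply Theorem \ref{dista} to both $d_{G\wr H}(z,u)$ and $d_{G\wr H}(z,v)$, observe that the common first coordinate $w$ makes both the $d_{H^{\Box n}}(y,w)$ term and the set $\delta(w,y)$ identical on the two sides, and conclude by cancellation. The paper's proof is just a terser version of the same argument.
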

\begin{proof}
By virtue of Theorem \ref{dista}, we have:
$$
d_{G\wr H}(z, u)= d_{H^{\Box n}}(y,w)+\rho_{\delta(w,y)}(x_i,x_j) \quad \textrm{ and } \ \ d_{G\wr H}(z, v)= d_{H^{\Box n}}(y,w)+\rho_{\delta(w,y)}(x_i,x_k).
$$
The claim follows.
\end{proof}

\begin{lemma}\label{n2e}
Consider two vertices $u,v\in V_{G\wr H}$, $u=(w,x_j)$ and  $v=(w, x_k)$, with $w\in (V_H)^n$  and $x_j\sim x_k$ in $G$. Let us put $E=\{u,v\}\in E_{G\wr H}$ and $e=\{x_j,x_k\}\in E_G$. Then we have:
$$
n_u(E)= \sum_{A\subseteq V_G}{(m-1)^{|A|} n_{x_j}(e,\rho_A)}.
$$
\end{lemma}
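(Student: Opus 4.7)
The plan is to count the vertices $z=(y,x_i)\in V_{G\wr H}$ that are strictly closer to $u$ than to $v$, and partition this count according to the subset $\delta(w,y)\subseteq V_G$.

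First I would apply Lemma \ref{n2}, which converts the condition $d_{G\wr H}(z,u)<d_{G\wr H}(z,v)$ into the purely combinatorial condition $\rho_{\delta(w,y)}(x_i,x_j)<\rho_{\delta(w,y)}(x_i,x_k)$, i.e., $x_i\in B_{x_j}(e,\rho_{\delta(w,y)})$. Thus
\begin{equation*}
n_u(E)=\big|\{(y,x_i)\in (V_H)^n\times V_G:\; x_i\in B_{x_j}(e,\rho_{\delta(w,y)})\}\big|.
\end{equation*}

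Next, I would stratify by the value of $A:=\delta(w,y)\subseteq V_G$, rewriting
\begin{equation*}
n_u(E)=\sum_{A\subseteq V_G}\big|\{y\in (V_H)^n:\;\delta(w,y)=A\}\big|\cdot \big|\{x_i\in V_G:\;x_i\in B_{x_j}(e,\rho_A)\}\big|,
\end{equation*}
since once $A$ is fixed, the conditions on $y$ and on $x_i$ decouple. The second factor is precisely $n_{x_j}(e,\rho_A)$ by definition.

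For the first factor, I would observe that $\delta(w,y)=A$ forces $y_h=w_h$ at every coordinate $h\notin A$ (which leaves no freedom) and forces $y_h\neq w_h$ at every coordinate $h\in A$ (which leaves $m-1$ choices in $V_H$). Hence the count is exactly $(m-1)^{|A|}$, and plugging in yields the stated identity. No serious obstacle is expected: the whole content is encapsulated in Lemma \ref{n2}, and the rest is a straightforward bookkeeping argument exploiting the product structure of $V_{G\wr H}=(V_H)^n\times V_G$ and the decoupling provided by conditioning on $A=\delta(w,y)$.
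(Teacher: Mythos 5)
Your proposal is correct and follows essentially the same route as the paper's own proof: both invoke Lemma \ref{n2} to translate the distance comparison into membership in $B_{x_j}(e,\rho_{\delta(w,y)})$, then stratify over $A=\delta(w,y)$, decouple the conditions on $y$ and $x_i$, and count $(m-1)^{|A|}$ choices for $y$. No gaps.
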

\begin{proof}
By Lemma \ref{n2}, a vertex $z=(y,x_i)\in V_{G\wr H}$ is in $B_u(E)$ if and only if $x_i$ is in $B_{x_j}(e, \rho_{\delta(w,y)})$.
For a fixed $x_i\in V_G$, and a fixed $A\subseteq V_G$, there are exactly $(m-1)^{|A|}$ vertices $z=(y, x_i)\in V_{G\wr H}$ such that $A=\rho_{\delta(w,y)}$. Then:
\begin{equation*}\begin{split}
n_u(E)&=|\{z=(y,x_i)\in V_{G\wr H}: \rho_{\delta(w,y)}(x_i,x_j)<\rho_{\delta(w,y)}(x_i,x_k)\}|\\&=
\sum_{A\subseteq V_G} |\{z=(y,x_i)\in V_{G\wr H}: \delta(w,y)=A,\, \rho_{A}(x_i,x_j)<\rho_{A}(x_i,x_k)\}|\\&=
\sum_{A\subseteq V_G} |\{y \in (V_{H})^n: \delta(w,y)=A \}| |\{x_i\in V_G:  \rho_{A}(x_i,x_j)<\rho_{A}(x_i,x_k)\}|\\&=
\sum_{A\subseteq V_G} (m-1)^{|A|} n_{x_j}(e,\rho_A).
\end{split}
\end{equation*}
\end{proof}

\begin{remark}\label{inva2}
The quantities $n_u(E)$ and $n_v(E)$, associated with an edge $E$ of type II, do not depend on the configuration of the lamps:
consider $E=\{u,v\}$, with $u=(w,x_j),v=(w, x_k)\in V_{G\wr H}$, and $E'=\{u',v'\}$, with $u'=(w',x_j),v'=(w', x_k)\in V_{G\wr H}$, then
$n_u(E)=n_{u'}(E)$ and $n_v(E)=n_{v'}(E)$.
\end{remark}

\begin{defi}\label{dd}
Let $G=(V_G,E_G)$ be a connected graph, and let $A$ and $B$ be two subsets of $V_G$. We put
$$
Sz(G,A,B):=\sum_{e=\{x_j,x_k\}\in E_G} n_{x_j}(e,\rho_A) n_{x_k}(e,\rho_B).
$$
\end{defi}

\begin{prop}\label{n2p}
Let $G$ and $H$ be connected graphs with $|V_G|=n$, and $|V_H|=m$. Then
$$
Sz_{II}(G\wr H)=m^n \sum_{A,B\subseteq V_G}(m-1)^{|A|+|B|} Sz(G,A,B).
$$
\end{prop}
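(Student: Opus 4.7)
The plan is to express $Sz_{II}(G\wr H)$ by first parameterizing edges of type II, then applying the formula from Lemma \ref{n2e} together with Remark \ref{inva2}, and finally swapping the order of summation to recognize the expression $Sz(G,A,B)$ from Definition \ref{dd}.

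First I would observe that there is a natural bijection between $E_{II}$ and the set $E_G \times (V_H)^n$: each edge of type II has the form $E=\{(w,x_j),(w,x_k)\}$ with $\{x_j,x_k\}\in E_G$ and $w\in(V_H)^n$, and this representation is unique once the unordered edge $\{x_j,x_k\}\in E_G$ is fixed. This gives
\[
Sz_{II}(G\wr H)=\sum_{e=\{x_j,x_k\}\in E_G}\,\sum_{w\in (V_H)^n} n_u(E)\,n_v(E),
\]
where $u=(w,x_j)$ and $v=(w,x_k)$.

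Next, by Remark \ref{inva2}, the factors $n_u(E)$ and $n_v(E)$ depend only on the edge $e\in E_G$ and not on the lamp configuration $w$. Hence the inner sum over $w$ contributes exactly a factor of $m^n$, and I can apply Lemma \ref{n2e} to expand both factors as
\[
n_u(E)=\sum_{A\subseteq V_G}(m-1)^{|A|}\,n_{x_j}(e,\rho_A),\qquad n_v(E)=\sum_{B\subseteq V_G}(m-1)^{|B|}\,n_{x_k}(e,\rho_B).
\]
Multiplying, this yields
\[
Sz_{II}(G\wr H)=m^n\sum_{e=\{x_j,x_k\}\in E_G}\sum_{A,B\subseteq V_G}(m-1)^{|A|+|B|}\,n_{x_j}(e,\rho_A)\,n_{x_k}(e,\rho_B).
\]

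Finally, exchanging the two summations and recognizing the inner sum as $Sz(G,A,B)$ according to Definition \ref{dd} gives the assertion. The only mildly delicate point is the symmetry convention for $Sz(G,A,B)$ when $A\neq B$: the edges in $E_G$ are unordered, but the expression $n_{x_j}(e,\rho_A)n_{x_k}(e,\rho_B)$ is not symmetric in $(x_j,x_k)$. However this is harmless, because once the full double sum over $A$ and $B$ is taken, the contribution of any fixed orientation $\{x_j,x_k\}$ coincides with the opposite orientation after relabeling $A\leftrightarrow B$, so the result does not depend on the chosen convention. I do not expect any serious obstacle; the bulk of the work has already been absorbed into Lemma \ref{n2e} and Remark \ref{inva2}.
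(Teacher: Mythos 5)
Your proposal is correct and follows essentially the same route as the paper's proof: parameterize the type II edges by $E_G\times (V_H)^n$, pull out the factor $m^n$ via Remark \ref{inva2}, expand both $n_u(E)$ and $n_v(E)$ with Lemma \ref{n2e}, and exchange summations to recognize $Sz(G,A,B)$. Your remark about the labeling convention for unordered edges is a sensible extra observation that the paper leaves implicit.
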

\begin{proof}
By definition, we have
$$
Sz_{II}(G\wr H)=\sum_{E=\{u,v\}\in E_{II}} n_u(E)n_v(E).
$$
We fix $w\in  (V_H)^n$ and set $u_i=(w,x_i)\in V_{G\wr H}$, for every $x_i\in V_G$, and $E_{j,k}=\{u_j,u_k\}\in E_{G\wr H}$, for every $\{x_j,x_k\}\in E_G$.
By virtue of Remark \ref{inva2}, we have:
$$\sum_{E=\{u,v\}\in E_{II}} n_u(E)n_v(E)= m^n \sum_{e=\{x_j,x_k\}\in E_G} n_{u_j}(E_{j,k} ) n_{u_k}(E_{j,k}).$$
Finally, by combining with Lemma \ref{n2e}, we obtain
\begin{equation*}
\begin{split}
Sz_{II}(G\wr H)&= m^n \sum_{e=\{x_j,x_k\}\in E_G} {\sum_{A\subseteq V_G}{(m-1)^{|A|} n_{x_j}(e,\rho_A)} \sum_{B\subseteq V_G}{(m-1)^{|B|} n_{x_k}(e,\rho_B)}}\\&=m^n \sum_{A,B\subseteq V_G}(m-1)^{|A|+|B|} Sz(G,A,B).
\end{split}
\end{equation*}
\end{proof}
By gluing together Proposition \ref{n1p} and Proposition \ref{n2p}, and using Equation \eqref{split}, we obtain the following theorem.

\begin{theorem}\label{szegen}
Let $G=(V_G,E_G)$ and $H=(V_H,E_H)$ be connected graphs with $|V_G|=n$, and $|V_H|=m$. Then we have
$$
Sz(G\wr H)=n^3 m^{3n-3} Sz(H)+m^n \sum_{A,B\subseteq V_G}(m-1)^{|A|+|B|}Sz(G,A,B).
$$
\end{theorem}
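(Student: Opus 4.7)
The plan is to exploit the natural partition $E_{G\wr H} = E_I \sqcup E_{II}$ of the edge set of the wreath product into edges of type I and type II, as given in Definition \ref{defierschler}. Since every edge falls into exactly one of these two classes, the defining sum for $Sz(G\wr H)$ splits cleanly as $Sz(G\wr H) = Sz_I(G\wr H) + Sz_{II}(G\wr H)$, which is precisely Equation \eqref{split}. With the two summands isolated, each is handled by a previously established proposition.

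For the type I summand, I would invoke Proposition \ref{n1p}, which already gives $Sz_I(G\wr H) = n^3 m^{3n-3} Sz(H)$; this produces the first term of the claimed formula. The conceptual core of that proposition, established via Lemmas \ref{n1} and \ref{n1e}, is that along a type I edge only a single lamp coordinate changes, so the bisection $B_u(E)$ is controlled entirely by the corresponding bisection in $H$, multiplied by $nm^{n-1}$ to account for the choices of lamp position and of the other $n-1$ frozen lamp values.

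For the type II summand, I would invoke Proposition \ref{n2p}, which gives $Sz_{II}(G\wr H) = m^n \sum_{A,B \subseteq V_G}(m-1)^{|A|+|B|} Sz(G,A,B)$, matching the second term verbatim. Its proof relies on Lemmas \ref{n2} and \ref{n2e}: along a type II edge the lamp configuration $w$ is frozen, so the bisection is governed by $\rho_{\delta(w,y)}$, and summing over the possible mismatch subsets $A = \delta(w,y)$, each contributing $(m-1)^{|A|}$ configurations of $y$, produces the double sum over $A$ and $B$. The $m^n$ prefactor arises from Remark \ref{inva2}, ensuring that $n_u(E)$ and $n_v(E)$ do not depend on $w$, so that each underlying $G$-edge lifts to $m^n$ parallel copies in $E_{II}$ with identical bisection numbers.

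Combining the two contributions through \eqref{split} then yields the theorem. No additional obstacle arises here: the disjointness of the two edge classes is built into Definition \ref{defierschler}, ruling out any double counting, and the two formulas superimpose without interaction. All the genuine work has been done in the two propositions; the theorem itself is simply the assembly step.
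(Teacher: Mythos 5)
Your proposal is correct and follows the paper's argument exactly: the paper also obtains Theorem \ref{szegen} by splitting $Sz(G\wr H)$ into the type I and type II contributions via Equation \eqref{split} and then gluing together Propositions \ref{n1p} and \ref{n2p}. Your accompanying summaries of how those propositions are proved are likewise faithful to the paper.
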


\begin{remark}
Let $H_1$ and $H_2$ be connected graphs with $|V_{H_1}|=|V_{H_2}|$ and $Sz(H_1)=Sz(H_2)$. Then we have:
$$
Sz(G\wr H_1)=Sz(G\wr H_2), \qquad \textrm{for every graph }G.
$$
\end{remark}

We can exploit possible symmetries in order to simplify the formula of Theorem \ref{szegen} for some special classes of graphs.
\begin{lemma}\label{lemminojan}
Let $e,e'\in E_G$, where $e=\{u,v\}$, $e'=\{u',v'\}$, with $u,u',v,v'\in V_G$. Suppose that there exists $\phi\in Aut(G)$ such that $\phi(u)=u'$ and $\phi(v)=v'$. Then
$$
n_u(e)=n_{u'}(e'),\;\;n_v(e)=n_{v'}(e').
$$
\end{lemma}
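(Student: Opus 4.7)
The plan is to exploit the distance-preserving nature of automorphisms. Recall that any automorphism $\phi\in Aut(G)$ satisfies $d_G(w,w'')=d_G(\phi(w),\phi(w''))$ for all $w,w''\in V_G$; this is a standard consequence of the definition, since $\phi$ is a bijection on $V_G$ preserving adjacency, and hence carries shortest paths to shortest paths of the same length.

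First I would show that $\phi$ restricts to a bijection $B_u(e)\to B_{u'}(e')$. Concretely, for any $w\in V_G$, the hypothesis $\phi(u)=u'$, $\phi(v)=v'$ combined with the distance-preserving property gives
$$
d_G(w,u) = d_G(\phi(w),\phi(u)) = d_G(\phi(w),u'), \qquad d_G(w,v) = d_G(\phi(w),\phi(v)) = d_G(\phi(w),v').
$$
Therefore the inequality $d_G(w,u)<d_G(w,v)$ is equivalent to $d_G(\phi(w),u')<d_G(\phi(w),v')$, i.e.\ $w\in B_u(e)$ if and only if $\phi(w)\in B_{u'}(e')$. Since $\phi$ is a bijection on $V_G$, its restriction is a bijection between the two sets, and consequently $n_u(e)=|B_u(e)|=|B_{u'}(e')|=n_{u'}(e')$. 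The same argument, with the roles of the endpoints interchanged, yields $n_v(e)=n_{v'}(e')$.

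There is no real obstacle here; the only subtle point to highlight is that the two strict inequalities characterizing $B_u(e)$ and $B_{u'}(e')$ transform into one another simultaneously under $\phi$, which is exactly guaranteed by the assumption that $\phi$ matches the endpoints in the correct order (as opposed to only mapping the edge $e$ to $e'$ as an unordered pair). Thus the proof reduces to a short two-line verification using Proposition \ref{inva} (in its special case $A=\emptyset$, where $\rho_A=d_G$).
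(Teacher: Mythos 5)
Your argument is correct and is essentially the same as the paper's: both show that $\phi$ carries $B_u(e)$ bijectively onto $B_{u'}(e')$ using the fact that automorphisms preserve distances (the paper writes this as the set identity $\phi(B_u(e))=B_{u'}(e')$, you phrase it as an element-wise equivalence, which is the same computation). Your closing remark that this is the $A=\emptyset$ case of Proposition \ref{inva} is a nice observation but not needed.
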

\begin{proof}
We have
\begin{equation*}\begin{split}
\phi (\{w\in V_G: d_G(w,u)<d_G(w,v) \})&=
 \{\phi(w)\in V_G: d_G(w,u)<d_G(w,v) \}\\&=
  \{z \in V_G: d_G(\phi^{-1}(z),u)<d_G(\phi^{-1}(z),v) \}\\&=
    \{z \in V_G: d_G(z,u')<d_G(z,v') \}
    \end{split}
    \end{equation*}
  and then $n_u(e)=n_{u'}(e')$. By exchanging the role of $u$ and $v$, we get $n_v(e)=n_{v'}(e').$
\end{proof}
\begin{remark}\label{simmetrie}
It follows from Lemma \ref{lemminojan} that, if a graph $G=(V_G,E_G)$ is edge-transitive, then $Sz(G) =|E_G| n_u(e)n_v(e)$ for any $e=\{u,v\}\in E_G$. In particular, if $G$ is also arc-transitive, $Sz(G) =|E_G| n_u(e)^2$, for any $e=\{u,v\}\in E_G$. Therefore, if $G$ and $H$ are graphs with $|V_G|=n$, $|V_H|=m$, and $G$ is edge-transitive, by Remark \ref{inva2} we have:
$$
Sz_{II}(G\wr H)=|E_{II}| n_u(E)n_v(E)=m^n |E_G|n_u(E)n_v(E),
$$
for any $E=\{u,v\}\in E_{II}$. If $G$ is also arc-transitive,
$$
Sz_{II}(G\wr H)=m^n |E_G|n_u(E)^2.
$$
\end{remark}
As an application of Remark \ref{simmetrie}, in the following  theorem we explicitly compute the Szeged index $Sz(K_n\wr H)$, where $H$ is any connected graph.
\begin{theorem}\label{Tkn}
Let $K_n$ be the complete graph on $n$ vertices and let $H$ be a connected graph with $m$ vertices. Then:
$$
Sz(K_n\wr H)=n^3 m^{3n-3} Sz(H)+\frac{1}{2}m^n n (n-1)\left(m+m^{n-2}(m^2+mn-3m-n+2)\right)^2.
$$
\end{theorem}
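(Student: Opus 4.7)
The plan is to start from Theorem \ref{szegen} applied to $G=K_n$, which already gives
\[
Sz(K_n\wr H)=n^3m^{3n-3}Sz(H)+m^n\sum_{A,B\subseteq V_{K_n}}(m-1)^{|A|+|B|}Sz(K_n,A,B).
\]
The first summand is the desired $Sz_I$ contribution, so the whole task reduces to showing that the double sum on the right equals $\frac{1}{2}n(n-1)[m+m^{n-2}(m^2+mn-3m-n+2)]^2$. For this, I will invoke the full strength of Remark \ref{simmetrie}: since $K_n$ is arc-transitive, every type-II edge $E=\{u,v\}\in E_{K_n\wr H}$ yields the same value of $n_u(E)n_v(E)$, so that
\[
Sz_{II}(K_n\wr H)=m^n|E_{K_n}|\,n_u(E)^2=\tfrac{1}{2}m^n n(n-1)\,n_u(E)^2.
\]
Thus the entire theorem boils down to the single identity $n_u(E)=m+m^{n-2}(m^2+mn-3m-n+2)$.

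To compute $n_u(E)$ for an edge of type II corresponding to $e=\{x_j,x_k\}\in E_{K_n}$, I will use Lemma \ref{n2e}:
\[
n_u(E)=\sum_{A\subseteq V_{K_n}}(m-1)^{|A|}n_{x_j}(e,\rho_A),
\]
and then determine $n_{x_j}(e,\rho_A)$ by a direct case analysis based on the explicit formula \eqref{rocompleto} for $\rho_A$ on the complete graph. Splitting the subsets $A$ according to whether they contain $x_j$, $x_k$, both, or neither, and comparing $\rho_A(x_i,x_j)$ with $\rho_A(x_i,x_k)$ for each position of $x_i$, I expect to obtain: $n_{x_j}(e,\rho_A)=1$ when $A=\emptyset$ or when $\{x_j,x_k\}\subseteq A$; $n_{x_j}(e,\rho_A)=n$ when $A=\{x_j\}$; $n_{x_j}(e,\rho_A)=n-1$ when $x_j\in A$, $x_k\notin A$ and $|A|\geq 2$; and $n_{x_j}(e,\rho_A)=0$ in all remaining cases. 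This is the main bookkeeping step of the proof; the cases involving $x_i\in A$ and the diagonal cases ($u=v\in A$ versus $u=v\notin A$) in \eqref{rocompleto} are the ones requiring extra care, because there the distinction between $|A|=1$ and $|A|>1$ produces the boundary contribution $n(m-1)$ to the final formula.

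Once the case analysis is complete, I will group the contributions according to $|A|=k$, counting the number of relevant subsets via binomial coefficients $\binom{n-2}{k-1}$ and $\binom{n-2}{k-2}$, so that
\[
n_u(E)=1+n(m-1)+(n-1)\sum_{k=2}^{n-1}\binom{n-2}{k-1}(m-1)^k+\sum_{k=2}^{n}\binom{n-2}{k-2}(m-1)^k.
\]
A routine application of the Binomial Theorem to the two remaining sums produces $(m-1)(m^{n-2}-1)$ and $(m-1)^2m^{n-2}$ respectively, and collecting terms gives
\[
n_u(E)=m+(m-1)(n+m-2)m^{n-2}=m+m^{n-2}(m^2+mn-3m-n+2).
\]
Squaring this, multiplying by $\tfrac{1}{2}m^n n(n-1)$, and adding the previously computed $Sz_I(K_n\wr H)=n^3m^{3n-3}Sz(H)$ yields the claim. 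The only real obstacle is the case analysis in step two; everything else is a clean binomial calculation.
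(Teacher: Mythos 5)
Your proposal is correct and follows essentially the same route as the paper's proof: arc-transitivity of $K_n$ via Remark \ref{simmetrie} reduces everything to computing $n_u(E)$ for a single type-II edge, Lemma \ref{n2e} together with the case analysis of $n_{x_j}(e,\rho_A)$ from Equation \eqref{rocompleto} gives the weighted sum $1+n(m-1)+(n-1)\sum_h\binom{n-2}{h-1}(m-1)^h+\sum_h\binom{n-2}{h-2}(m-1)^h$, and the Binomial Theorem closes the computation. Your case values for $n_{x_j}(e,\rho_A)$ and the final simplification to $m+m^{n-2}(m^2+mn-3m-n+2)$ all match the paper exactly.
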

\begin{proof}
We fix $e=\{x_j,x_k\}\in E_{K_n}$. For any subset $A\subseteq V_{K_n}$, with $|A|\geq 2$, looking at Equation \eqref{rocompleto} of Example \ref{K_n}, it is easy to see that:
$$
B_{x_j}(e,\rho_A)=
\begin{cases}
\emptyset \; &\mbox{if } x_j,x_k\notin A\\
V_G\setminus \{x_j\}\;  &\mbox{if } x_j\in A, x_k\notin A\\
\emptyset \; &\mbox{if } x_j\notin A, x_k\in A\\
\{x_k\}\;  &\mbox{if } x_j,x_k\in A.
\end{cases}
\qquad
n_{x_j}(e,\rho_A)=
\begin{cases}
0\; &\mbox{if } x_j,x_k\notin A\\
n-1\;  &\mbox{if } x_j\in A, x_k\notin A\\
0\; &\mbox{if } x_j\notin A, x_k\in A\\
1\;  &\mbox{if } x_j,x_k\in A.
\end{cases}
$$
If $A=\emptyset$, the set $B_{x_j}(e,\rho_{\emptyset})$ contains only the vertex $x_j$, so that $n_{x_j}(e,\rho_{\emptyset})=1$. If $A=\{x\}$, then $n_{x_j}(e,\rho_A)$ is non-zero only if $x=x_j$: if this is the case, one has $n_{x_j}(e,\rho_{\{x_j\}})=n$.
Notice that
$$
|\{A\subseteq V_G: |A|=h,   x_j\in A, x_k\notin A  \}|= {n-2\choose h-1}, \quad  \textrm{ for each } h=2,\ldots,n-1;
$$
$$
|\{A\subseteq V_G: |A|=h,   x_j\in A, x_k\in A  \}|= {n-2\choose h-2}, \quad \textrm{ for each } h=2,\ldots, n.
$$
By an explicit computation we have:
\begin{equation}\label{conto}
 \begin{split}
 &\sum_{A\subseteq V_G}{(m-1)^{|A|} n_{x_j}(e,\rho_A)}\\&=1+n(m-1)+\sum_{h=2}^{n-1} {n-2\choose h-1} (m-1)^h (n-1)+ \sum_{h=2}^{n} {n-2\choose h-2} (m-1)^h  \\
 &= m+(n-1)(m-1)m^{n-2}+ (m-1)^2m^{n-2}.
 \end{split}
\end{equation}
Since $K_n$ is arc-transitive, by virtue of Remark \ref{simmetrie}, we have $Sz_{II}(K_n\wr H)=m^n |E_{K_n}| n_u(E)^2$,
for any edge $E$ of type II in $K_n\wr H$. Combining Lemma \ref{n2e} with Equation \eqref{conto}, we obtain:
$$
Sz_{II}(K_n\wr H)=\frac{1}{2}m^n n (n-1)\left(m+(n-1)(m-1)m^{n-2}+ (m-1)^2m^{n-2}\right)^2.
$$
Finally, combining with Proposition \ref{n1p}, we get the thesis.
\end{proof}

\begin{example}\label{cappa24}
Consider the wreath product $K_2\wr C_4$ (see Fig. \ref{figK2C4}). According with our notation, we have $n=2$ and $m=4$. Moreover, as the graph $K_2$ is $1$-regular and $C_4$ is $2$-regular, the wreath product $K_2 \wr C_4$ is a $3$-regular graph on $32$ vertices with $48$ edges. The partition between the $32$ edges of type I and the $16$ edges of type II is also graphical in Fig. \ref{figK2C4}: the horizontal and vertical edges are edges of type I (representing a change of configuration, that is, a step in a copy of $C_4$), the diagonal ones are of type II (representing a change of position, that is, a step in a copy of $K_2$). There is no automorphism of $K_2\wr C_4$ sending an edge of type I to an edge of type II or viceversa (since an edge of $K_2\wr C_4$  is of type I if and only if it is contained in a $4$-cycle, but any automorphism maps $4$-cycles to $4$-cycles). In particular, $K_2\wr C_4$ is not edge-transitive. However, by a direct computation, $n_u(e)=n_v(e)=16$ for any edge $e=\{u,v\}\in E_{K_2\wr C_4}.$ This gives
$$
Sz(K_2\wr C_4) = \sum_{e=\{u,v\}\in E_{K_2\wr C_4}}n_u(e)n_v(e) =16^2 |E_{K_2\wr C_4}| =12288.
$$
Observe that this is also the value expected from Theorem \ref{Tkn}, since, as we have seen in Example \ref{szC_m}, one has $Sz(C_4)=16$.
\begin{figure}[h]
\begin{center}
\includegraphics[width=0.37\textwidth]{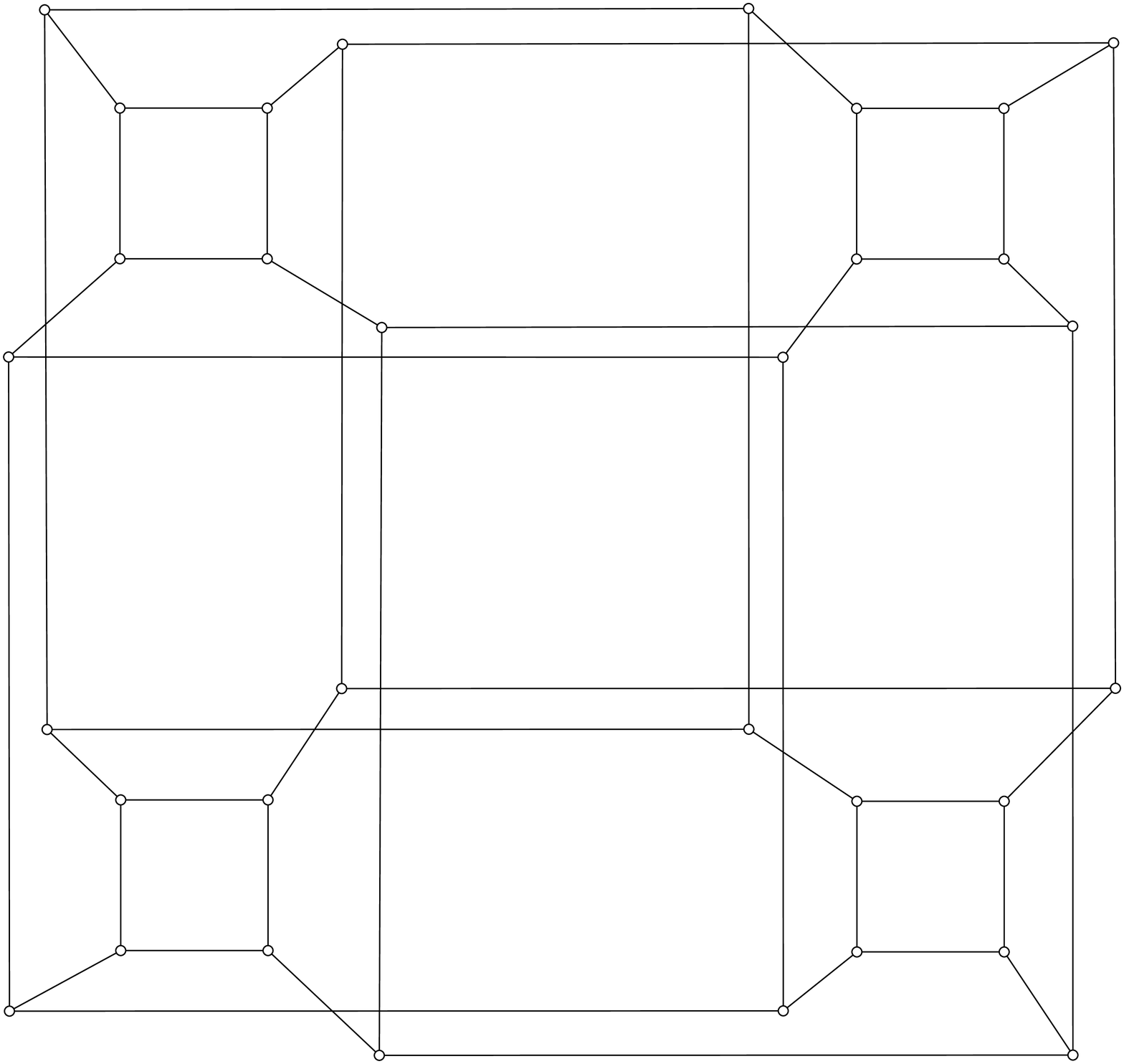}
\end{center}\caption{The graph $K_2 \wr C_4$.}\label{figK2C4}
\end{figure}
\end{example}


\end{document}